\documentclass[11pt]{amsart}


\usepackage[pdfauthor={Ziyang GAO}, 
        pdftitle={Ax-Schanuel}%
      dvips,colorlinks=true]{hyperref}
\hypersetup{
  bookmarksnumbered=true,
  linkcolor=black,
  citecolor=black,
  pagecolor=black, 
  urlcolor=black,  
}


\usepackage[utf8]{inputenc}

\usepackage{xtab}
\usepackage{amsmath, amssymb, cancel,verbatim}
\usepackage[all]{xy}
\usepackage[scr=euler,scrscaled=.96]{mathalfa}

\usepackage{mathtools,booktabs}
\usepackage{amscd}
\usepackage{bbm, stmaryrd}
\usepackage{yfonts}
\usepackage{color}


\setlength{\textwidth}{16cm}
\setlength{\oddsidemargin}{0cm}
\setlength{\evensidemargin}{0cm}
%
\usepackage{epstopdf} 
\usepackage{booktabs}



\setcounter{part}{-1}
\setcounter{tocdepth}{1}


\newtheorem{teo}{Theorem}[section]
\newtheorem{thm}[teo]{Theorem}
\newtheorem{prop}[teo]{Proposition}
\newtheorem{lemma}[teo]{Lemma}
\newtheorem{cor}[teo]{Corollary}

\newtheorem{eg}[teo]{Example}
\newtheorem{defn}[teo]{Definition}
\newtheorem{ques}[teo]{Question}
\newtheorem{rmk}[teo]{Remark}
\newtheorem{fact}{Fact}

\newtheorem{def-prop}[teo]{Definition-Proposition}
\newtheorem{notation}[teo]{Notation}

\numberwithin{equation}{section}



  \newcommand{\A}{\mathbb{A}}
  \newcommand{\C}{\mathbb{C}}

  \newcommand{\G}{\mathbb{G}}

  \newcommand{\N}{\mathbb{N}}
  
  \newcommand{\Q}{\mathbb{Q}}
  \newcommand{\R}{\mathbb{R}}  
  \renewcommand{\S}{\mathbb{S}}
  
  \newcommand{\Z}{\mathbb{Z}}

  \newcommand{\bu}{\mathbf{u}}
  \newcommand{\bH}{\mathbf{H}}
  

  \renewcommand{\epsilon}{\varepsilon}


  
  \newcommand{\im}{\text{Im}}

  \renewcommand{\cong}{\simeq}
  \renewcommand{\bar}{\overline}
  \renewcommand{\tilde}{\widetilde}

  \providecommand{\frac}[1]{\operatorname{Frac}(#1)}

  \renewcommand{\hom}{\operatorname{Hom}}

  \newcommand{\stab}{\operatorname{Stab}}
  
  \newcommand{\GL}{\operatorname{GL}}


  \renewcommand{\lim}{\operatorname{lim}}
  \renewcommand{\deg}{\operatorname{deg}}

  \newcommand{\lie}{\operatorname{Lie}}

  \newcommand{\der}{\mathrm{der}}

  \newcommand{\Zar}{\mathrm{Zar}}
  \newcommand{\biZar}{\mathrm{biZar}}
  \newcommand{\GSp}{\mathrm{GSp}}
  \newcommand{\Sp}{\mathrm{Sp}}

\newcommand{\cA}{\mathcal{A}}

\newcommand{\cD}{\mathcal{D}}

\newcommand{\cG}{\mathcal{G}}

\newcommand{\cR}{\mathcal{R}}

\newcommand{\cT}{\mathcal{T}}

\newcommand{\cX}{\mathcal{X}}
\newcommand{\cY}{\mathcal{Y}}


\newcommand{\pullbackcorner}[1][dr]{\save*!/#1-1.7pc/#1:(-1.5,1.5)@^{|-}\restore}


\newcommand\supervisor[1]{\def\@supervisor{#1}}

\newcounter{elno}


\renewcommand{\cong}{\simeq}

\begin{document}
\title[Ax-Schanuel for the universal abelian variety]{Mixed Ax-Schanuel for the universal abelian varieties and some applications}
\author{Ziyang Gao}

\address{CNRS, IMJ-PRG, 4 place de Jussieu, 75005 Paris, France;  Department of Mathematics, Princeton University, Princeton, NJ 08544, USA}
\email{ziyang.gao@imj-prg.fr}

\subjclass[2010]{11G10, 11G18, 14G35}

\maketitle

\begin{abstract} In this paper we prove the mixed Ax-Schanuel theorem for the universal abelian varieties (more generally any mixed Shimura variety of Kuga type), and give some simple applications. In particular we present an application to studying the generic rank of the Betti map.
\end{abstract}

\tableofcontents

\section{Introduction}
The goal of this paper is to prove a transcendence result and give some simple applications. More applications to Diophantine problems will be given in forthcoming papers.

The transcendence result is the following mixed Ax-Schanuel theorem for the universal abelian variety. We call it ``mixed'' since the ambient space is a mixed Shimura variety but not a \textit{pure} Shimura variety. It parametrizes $1$-motives of a certain kind. The result is an extension of a recent result of Mok-Pila-Tsimerman \cite{MokAx-Schanuel-for} on the Ax-Schanuel theorem for pure Shimura varieties. Let us describe the setting.

Let $D = \mathrm{diag}(d_1, \cdots, d_g)$ be a diagonal $g \times g$-matrix with $d_1|\cdots|d_g$ positive integers. Let $\A_{g,D}(N)$ be the moduli space of abelian varieties of dimension $g$ which are polarized of type $D$ equipped with level-$N$-structures. Assume $N \ge 3$. Then $\A_{g,D}(N)$ admits a universal family $\pi \colon \mathfrak{A}_{g,D}(N) \rightarrow \A_{g,D}(N)$. For simplicity we drop the ``$D$'' and the ``$(N)$''.

The uniformizing space $\cX_{2g,\mathrm{a}}^+$ of $\mathfrak{A}_g$, in the category of complex spaces, admits a reasonable algebraic structure. This is explained in $\mathsection$\ref{SubsectionReviewOnBiAlgSystem}. Denote by $\bu \colon \cX_{2g,\mathrm{a}}^+ \rightarrow \mathfrak{A}_g$ the uniformization. We say that an irreducible subvariety $Y$ of $\mathfrak{A}_g$ is \textit{bi-algebraic} if one (and hence any) complex analytic irreducible component of $\bu^{-1}(Y)$ is algebraic in $\cX_{2g,\mathrm{a}}^+$. There is a good geometric interpretation of bi-algebraic subvarieties of $\mathfrak{A}_g$; see $\mathsection$\ref{SubsectionGeomDescriptionBiAlg}.

The main result of the paper is the following theorem, which we prove as Theorem~\ref{ThmASUnivAbVar}. Denote by $\mathrm{pr}_{\cX_{2g,\mathrm{a}}^+} \colon \cX_{2g,\mathrm{a}}^+ \times \mathfrak{A}_g \rightarrow \cX_{2g,\mathrm{a}}^+$ and $\mathrm{pr}_{\mathfrak{A}_g} \colon \cX_{2g,\mathrm{a}}^+ \times \mathfrak{A}_g \rightarrow \mathfrak{A}_g$ the natural projections.
\begin{thm}[mixed Ax-Schanuel for the universal abelian variety]\label{ThmASUnivAbVarIntro}
Let $\mathscr{Z}$ be a complex analytic irreducible subvariety of $\mathrm{graph}(\bu) \subseteq \cX_{2g,\mathrm{a}}^+ \times \mathfrak{A}_g$. Denote by $Z = \mathrm{pr}_{\mathfrak{A}_g}(\mathscr{Z})$. Then
\[
\dim \mathscr{Z}^{\Zar} - \dim \mathscr{Z} \ge \dim Z^{\biZar},
\]
where $\mathscr{Z}^{\Zar}$ is the Zariski closure of $\mathscr{Z}$ in $\cX_{2g,\mathrm{a}}^+ \times \mathfrak{A}_g$, and $Z^{\biZar}$ is the smallest bi-algebraic subvariety of $\mathfrak{A}_g$ which contains $Z$.

Moreover if we denote by $\tilde{Z} = \mathrm{pr}_{\cX_{2g,\mathrm{a}}^+}(\mathscr{Z})$, then the equality holds if $\tilde{Z}$ is a complex analytic irreducible component of $\tilde{Z}^{\Zar} \cap \bu^{-1}(Z^{\Zar})$.
\end{thm}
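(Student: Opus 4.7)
The plan is to adapt the approach of Mok--Pila--Tsimerman \cite{MokAx-Schanuel-for} to the mixed setting. Write $\mathfrak{A}_g = \Gamma\backslash\cX_{2g,\mathrm{a}}^+$ where $\Gamma\subset P(\Q)^+$ is an arithmetic subgroup of the mixed Shimura datum group $P = V_{2g}\rtimes \GSp_{2g}$. By $\mathsection$\ref{SubsectionGeomDescriptionBiAlg}, bi-algebraic subvarieties of $\mathfrak{A}_g$ correspond to orbits of normal $\Q$-sub-data of $P$, so the goal becomes: produce a connected $\Q$-algebraic subgroup $H\subset P$ of dimension $\dim\mathscr{Z}^{\Zar}-\dim\mathscr{Z}$ whose orbit through a lift of $Z$ recovers $Z^{\biZar}$.

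As a preliminary step I will reduce the general inequality to the equality of the \emph{moreover} clause: enlarging $\mathscr{Z}$ to a maximal complex analytic component of $\mathscr{Z}^{\Zar}\cap\mathrm{graph}(\bu)$ containing it does not change $\dim\mathscr{Z}^{\Zar}$, can only grow $\dim Z^{\biZar}$, and forces $\tilde Z$ to be an analytic component of $\tilde Z^{\Zar}\cap\bu^{-1}(Z^{\Zar})$. So it is enough to prove the equality
\[
\dim\mathscr{Z}^{\Zar}-\dim\mathscr{Z} = \dim Z^{\biZar}
\]
under the maximality hypothesis.

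The central engine is o-minimality and the Pila--Wilkie counting theorem. Fix a semi-algebraic fundamental set $F\subset\cX_{2g,\mathrm{a}}^+$ for $\Gamma$; by the definability results recalled in $\mathsection$\ref{SubsectionReviewOnBiAlgSystem} (already exploited for mixed Ax--Lindemann), the restriction $\bu|_F$ is definable in $\R_{\mathrm{an,exp}}$. Consider the definable set
\[
\Theta = \bigl\{g\in P(\R)^+ \,:\, \dim_x\bigl(g\cdot\mathscr{Z}^{\Zar}\cap\mathrm{graph}(\bu|_F)\bigr) \ge \dim\mathscr{Z} \text{ at some } x\in F\bigr\}.
\]
The $\Gamma$-translates of the analytic branches of $\mathscr{Z}^{\Zar}\cap\mathrm{graph}(\bu)$ that meet $F\times\mathfrak{A}_g$ yield polynomially many lattice points in $\Theta$ of bounded height — the maximality hypothesis is what guarantees that these translates fill out enough of $\mathscr{Z}^{\Zar}$. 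Pila--Wilkie then produces a positive-dimensional real semi-algebraic block $B\subset\Theta$ through the identity, and I take $H$ to be the identity component of the $\Q$-Zariski closure of $\langle B\rangle$. By construction $H$ stabilizes $\mathscr{Z}^{\Zar}$ and $\tilde Z^{\Zar}$, so $H\cdot\tilde Z\subset\bu^{-1}(Z^{\Zar})$; a dimension count using maximality gives $\dim H = \dim\mathscr{Z}^{\Zar}-\dim\mathscr{Z}$, and projecting down will identify $\bu(H\cdot\tilde Z)$ with $Z^{\biZar}$.

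The main obstacle is the last identification: showing that $H$ genuinely comes from a mixed Shimura sub-datum, so that its orbit descends to a \emph{bi-algebraic} (rather than merely analytic) subvariety. In the pure case this is Andr\'e's theorem that the algebraic monodromy of a polarized $\Q$-variation of Hodge structure is normal in the Mumford--Tate group, which lets one pin $H$ down up to finite index. The mixed analogue, applied to the admissible variation of mixed Hodge structure on $Z^{\sm}$, has to be supplemented by a separate analysis of the component $H\cap V_{2g}$ in the unipotent radical: one must show this piece is $\Q$-rational and compatible with the action of the Levi. I expect this to follow from Deligne's semisimplicity together with the compatibility of the algebraic monodromy with the weight filtration, but it is the true technical heart of the argument and the place where the mixed case genuinely goes beyond the pure one.
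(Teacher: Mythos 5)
Your high-level architecture (definability of $\bu|_F$, counting $\Gamma$-translates, Pila--Wilkie, a $\Q$-stabilizer $H$, then interpret $H$ as coming from a sub-datum) matches the paper's, and your reduction to the maximality case is fine. But two steps you treat as routine are exactly where the mixed case departs from the pure one, and your proposed resolutions would not go through.

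First, you assert without argument that the $\Gamma$-translates "yield polynomially many lattice points in $\Theta$ of bounded height." In the pure case this follows from volume estimates on $\tilde\pi(\tilde Z)$ in geodesic balls (Hwang--To below, Bakker--Tsimerman above). In the mixed case the uniformizing space has a Euclidean factor $V(\R)$, and the number of $\gamma\in\Gamma$ of height $\le T$ meeting $\tilde Z$ depends on how fast $\tilde Z$ spreads in the $V$-direction relative to the base. The paper's Theorem~\ref{ThmBignessOfQStab} is a case analysis: if the vertical spread $\|p_V(\tilde Z(T))\|_\infty$ is $\le T^\delta$, then to each of the $\gtrsim T^\epsilon$ relevant $\gamma_G\in\Gamma_G$ one attaches a $\gamma_V$ of controlled height; if it is $> T^\delta$, then since $V(\R)$ is Euclidean and $\tilde Z(T)$ connected, one gets "polynomially many for free" lattice points in the unipotent direction (Tsimerman's trick). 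Your proposal skips this comparison entirely, and there is no way around it: the pure volume bounds alone do not control the fiber direction.

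Second, you identify the obstruction correctly (the unipotent part $V_H := H\cap V$ must be $\Q$-rational and $G$-stable), but your proposed tool is wrong. Neither Deligne semisimplicity nor Andr\'e's monodromy theorem directly applies to $H$: those statements concern the connected algebraic \emph{monodromy} group $N$ of $Y = \bu(\tilde Z)^{\Zar}$, and indeed give $N\lhd P$ (Lemma~\ref{LemmaNnormalinP}). But $H$ is the $\Q$-stabilizer of $\mathscr B=\mathscr Z^{\Zar}$, a priori smaller than $N$, and there is no variation of Hodge structure for which $H$ is the monodromy. The paper must first establish $H\lhd N$ via the Hilbert-scheme / algebraic-foliation argument of $\mathsection$\ref{SubsectionAlgFamilyAssoWithB}--\ref{SubsectionFirstStepTowardsNormality}, and then—this is the genuinely new content of $\mathsection$\ref{SubsectionLastStepTowardsNormality}—show that $V_H$ is a $G$-submodule by a bespoke argument: one shows $J_{\tilde x_G}V_H(\R)$ is constant along $\tilde X_G$ (using $G_N\lhd G$), runs a second Pila--Wilkie count purely in the unipotent direction to force $\tilde Z$ into a $V_H(\R)$-coset family, and concludes that $V_H(\R)$ must be complex, hence $V_H$ is $G$-stable. "Compatibility of monodromy with the weight filtration" gives you none of this; the paper even remarks that in the pure case this entire subsection is unnecessary because any normal $N\lhd G$ induces a decomposition of $\cX_G^+$, which fails for Kuga data.

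A smaller point: you cannot directly take "$H$ = identity component of the $\Q$-Zariski closure of $\langle B\rangle$" and read off $\dim H=\dim\mathscr Z^{\Zar}-\dim\mathscr Z$. The block $B$ only shows $B\subset p\stab_{P(\R)}(\mathscr B)$ for some $p$, whence $\dim H>0$; to promote this to the exact dimension count the paper quotients $(P,\cX^+)$ by $H$, checks that the $\Q$-stabilizer of the image $\mathscr B'$ is trivial (by maximality of $H$), and applies the bigness proposition again, recombining dimensions along the fibers of the quotient ($\mathsection$\ref{SectionEndOfProof}). Your proposal needs this final d\'evissage step as well.
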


The theorem is motivated by Schanuel's Conjecture on transcendental number theory. The analogue of this conjecture over function fields, currently known as the \textit{Ax-Schanuel theorem for complex algebraic tori}, was proven by Ax \cite{AxOn-Schanuels-co}. Later on Ax generalized his result to complex semi-abelian varieties \cite{AxSome-topics-in-}. The result of Ax \cite{AxOn-Schanuels-co} was reformulated and re-proven by Tsimerman \cite{TsimermanAx-Schanuel-and} using o-minimal geometry. In the Shimura setting, the theorem was proven by Pila-Tsimerman for $Y(1)^N$ \cite{PilaAx-Schanuel-for}, and recently proven for any pure Shimura variety by Mok-Pila-Tsimerman \cite{MokAx-Schanuel-for}. Its generalization to any variation of pure Hodge structures is proven by Bakker-Tsimerman \cite{BakkerThe-Ax-Schanuel}. A particular case of the Ax-Schanuel theorem, called the \textit{Ax-Lindemann} theorem, concerns the case where $\tilde{Z}$ is assumed to be algebraic. We refer to the survey \cite{KUYProceedingUtah} for its history.

Our proof of Theorem~\ref{ThmASUnivAbVarIntro} uses the work of Mok-Pila-Tsimerman and extends their proof. As a statement itself, Theorem~\ref{ThmASUnivAbVarIntro} implies the pure Ax-Schanuel theorem for the moduli space $\A_g$.

\subsection*{Application to the Betti map}
Let $\cA \rightarrow S$ be an abelian scheme of relative dimension $g$ over a smooth irreducible complex algebraic variety. 
Let $\tilde{S}\rightarrow S^{\mathrm{an}}$ be the universal covering, and let $\cA_{\tilde{S}}:= \cA \times_S \tilde{S}$. 
Then we can define the \textit{Betti map} (which is real analytic)
\[
b \colon \cA_{\tilde{S}} \rightarrow \mathbb{T}^{2g}
\]
where $\mathbb{T}^{2g}$ is the real torus of dimension $2g$ as follows. As $\tilde{S}$ is simply-connected, one can define a basis $\omega_1(\tilde{s}),\ldots,\omega_{2g}(\tilde{s})$ of the period lattice of each fiber $\tilde{s} \in \tilde{S}$ as holomorphic functions of $s$. Now each fiber $\cA_{\tilde{s}}$ can be identified with the complex torus $\C^g/\Z \omega_1(\tilde{s})\oplus \cdots \oplus \Z\omega_{2g}(\tilde{s})$, and each point $x \in \cA_{\tilde{s}}(\C)$ can be expressed as the class of $\sum_{i=1}^{2g}b_i(x) \omega_i(\tilde{s})$ for real numbers $b_1(x),\ldots,b_{2g}(x)$. Then $b(x)$ is defined to be the class of the $2g$-tuple $(b_1(x),\ldots,b_{2g}(x)) \in \R^{2g}$ modulo $\Z^{2g}$.

Let $\xi$ be a multi-section of $\cA/S$. Then it induces a multi-section $\tilde{\xi}$ of $\cA_{\tilde{S}}/\tilde{S}$. The following question is asked by Andr\'{e}-Corvaja-Zannier \cite[Question~2.1.2]{ACZBetti}.
\begin{ques}\label{ConjBettiRank}
Assume $\Z\xi$ is Zariski dense in $\cA$ and that $\cA/S$ has no fixed part (over any finite \'{e}tale covering of $S$). If $\dim \iota(\xi(S)) \ge g$, is it true that
\begin{equation}\label{EqQuesACZ}
\max_{\tilde{s} \in \tilde{S}}\left(\mathrm{rank}(\mathrm{d}b|_{\tilde{\xi}(\tilde{s})}) \right) = 2g?
\end{equation}
\end{ques}
Andr\'{e}-Corvaja-Zannier systematically studied Question~\ref{ConjBettiRank}. One important idea of Andr\'{e}-Corvaja-Zannier is to relate the derivative of the Betti map to the Kodaira-Spencer map. After some careful computation, they got a \textbf{sufficient} condition for \eqref{EqQuesACZ} in terms of the derivations on the base \cite[Corollay~2.2.2]{ACZBetti} (which we call \textit{Condition ACZ}). This condition does not depend on $\xi$, so itself is of independent interest. Then they proved Condition ACZ, thus gave an affirmative answer to Question~\ref{ConjBettiRank}, in \textit{loc.cit.} when $\iota_S$ is quasi-finite and $g \le 3$. The real hyperelliptic case, which goes beyond Conjecture~\ref{ConjBettiRank}, is also discussed in Appendix~I of \textit{loc.cit.}

Inspired by several discussions with Andr\'{e}-Corvaja-Zannier, the author applied the pure Ax-Schanuel theorem \cite{MokAx-Schanuel-for} to prove in Appendix of \textit{loc.cit.} that Condition ACZ holds if $\iota_S$ is quasi-finite, $\dim \iota_S(S) \ge g$, and $\iota_S(S)$ is Hodge generic in $\A_g$. In particular, a result of Andr\'{e}-Corvaja-Zannier \cite[Theorem~8.1.1]{ACZBetti} then gives an affirmative answer to Question~\ref{ConjBettiRank} when $\iota_S$ is quasi-finite, $\dim \iota_S(S) \ge g$, and $\mathrm{End}(\cA/S) = \Z$\footnote{By writing $\mathrm{End}(\cA/S)$, we allow finite coverings of $S$.}; see \cite[Theorem~2.3.2]{ACZBetti}. However there are examples with $\iota_S$ quasi-finite, $\dim \iota_S(S) \ge g$ but Condition ACZ violated (most of them arise from Shimura varieties of PEL type), and hence it is hardly possible to  prove the fully answer Question~\ref{ConjBettiRank} using only the pure Ax-Schanuel theorem.

In this paper, we give a different approach to study Question~\ref{ConjBettiRank}, by presenting a simple application of Theorem~\ref{ThmASUnivAbVarIntro} to Conjecture~\ref{ConjBettiRank}. We restrict ourselves to an easy case and prove the following result (Theorem~\ref{ThmAppToBettiRank}).
\begin{thm}
The equality \eqref{EqQuesACZ} holds if the geometric generic fiber of $\cA/S$ is a simple abelian variety, $\iota_S$ is quasi-finite, and $\dim \iota_S(S) \ge g$.
\end{thm}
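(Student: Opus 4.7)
The plan is to apply Theorem~\ref{ThmASUnivAbVarIntro} by contradiction: assume the generic rank $r$ of $\mathrm{d}b|_{\tilde\xi(\tilde s)}$ satisfies $r<2g$, and derive a dimension inequality incompatible with the theorem. Let $\tilde S\to S^{\mathrm{an}}$ be the universal cover and $\tilde{\cA}_{\tilde S}\cong\tilde S\times\C^g$ the fibrewise universal cover of $\cA_{\tilde S}$; the lift of the period map $\tilde S\to\H_g$ produces a natural holomorphic map $\Phi=(\phi,\psi)\colon\tilde{\cA}_{\tilde S}\to\cX_{2g,\mathrm{a}}^+\times\mathfrak{A}_g$ landing in $\mathrm{graph}(\bu)$. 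Let $W$ be a complex analytic lift of $\tilde\xi(\tilde S)$; set $\mathscr{Z}:=\Phi(W)$, $Z:=\iota(\xi(S))$, $\tilde Z:=\phi(W)$, so $\dim\mathscr{Z}=\dim Z=\dim\tilde Z=\dim S$ by quasi-finiteness of $\iota_S$.

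The first ingredient is that $Z^{\biZar}=\mathfrak{A}_T$, where $T\subset\A_g$ is the smallest weakly special subvariety containing $\iota_S(S)$ and $\mathfrak{A}_T:=\pi^{-1}(T)$. Indeed, by the description of bi-algebraic subvarieties in $\mathsection$\ref{SubsectionGeomDescriptionBiAlg}, $Z^{\biZar}$ projects to a weakly special subvariety of $\A_g$ containing $\iota_S(S)$ (hence to $T$), and fibres over $T$ by a (torsion translate of a) abelian subscheme of $\mathfrak{A}_T\to T$. Simplicity of the geometric generic fibre together with non-torsion of $\xi$ (otherwise $\mathrm{d}b\equiv 0$ along $\tilde\xi$) force this abelian subscheme to be the whole $\mathfrak{A}_T$. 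Thus $\dim Z^{\biZar}=\dim T+g$, and Theorem~\ref{ThmASUnivAbVarIntro} gives
\[
\dim\mathscr{Z}^{\Zar}\ge\dim S+\dim T+g.
\]

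The second ingredient is the algebraic constraint arising from $r<2g$. Using
\[
\mathrm{d}\xi(v)=\sum_{i=1}^{2g}\mathrm{d}b_i(v)\,\omega_i(\tilde s)+\sum_{i=1}^{2g}b_i(\tilde s)\,\mathrm{d}\omega_i(v),
\]
the real rank of $v\mapsto(\mathrm{d}b_i(v))_i$ equals the real rank of the $\C$-linear map $v\mapsto\mathrm{d}\xi(v)-\sum_ib_i(\tilde s)\,\mathrm{d}\omega_i(v)\colon T_{\tilde s}\tilde S\to\C^g$, so $r<2g$ is equivalent to the image of this $\C$-linear map lying in a proper subspace of $\C^g$. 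Thus there exist $g-r/2$ non-zero $\alpha_j\in(\C^g)^\vee$ with $\alpha_j(\mathrm{d}\xi-\sum_ib_i\,\mathrm{d}\omega_i)=0$. Interpreting the period derivatives $\mathrm{d}\omega_i$ via the Gauss--Manin connection on the relative de Rham cohomology of $\cA/S$ converts these into $g-r/2$ independent holomorphic algebraic relations defining a proper algebraic subvariety $V\subsetneq\pi^{-1}(T)^{\mathrm{alg}}\times\C^g$ of $\cX_{2g,\mathrm{a}}^+$ containing $\tilde Z$. Consequently $\dim\tilde Z^{\Zar}\le\dim V<\dim T+g$, and the parallel argument on the $\mathfrak{A}_g$ side gives $\dim Z^{\Zar}<\dim T+g$. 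Combined with the equality case of Theorem~\ref{ThmASUnivAbVarIntro}---applicable after replacing $\mathscr{Z}$ by an analytic component of $\tilde Z^{\Zar}\cap\bu^{-1}(Z^{\Zar})$---this yields $\dim\mathscr{Z}^{\Zar}<\dim S+\dim T+g$, contradicting the previous display.

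The main obstacle is precisely this third step: converting the real-analytic rank deficit into a strictly smaller algebraic ambient for $\tilde Z$. The relation $\alpha_j(\mathrm{d}\xi-\sum_ib_i\,\mathrm{d}\omega_i)=0$ mixes the holomorphic $\mathrm{d}\xi$, $\mathrm{d}\omega_i$ with the merely real-analytic functions $b_i$, so extracting an honest algebraic condition requires careful use of the Kodaira--Spencer/Gauss--Manin structure. A further subtlety is the need to invoke the equality case of Theorem~\ref{ThmASUnivAbVarIntro} in order to sharpen the resulting bound into a strict inequality, which is essential because the product bound $\dim\mathscr{Z}^{\Zar}\le\dim\tilde Z^{\Zar}+\dim Z^{\Zar}$ alone is not quite tight enough to close the contradiction.
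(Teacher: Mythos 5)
Your proposal and the paper share the same Ax-Schanuel theorem as the engine but apply it to fundamentally different objects, and the step you yourself flag as ``the main obstacle'' is a genuine gap that the paper's approach is designed to avoid.

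You propose to apply Ax-Schanuel to the lift $\tilde Z = \tilde\xi(\tilde S)$ of the multi-section and then to show $\dim\tilde Z^{\Zar}$ is small by translating the rank condition $r<2g$ into algebraic relations via Gauss--Manin and Kodaira--Spencer. As you note, the identity
\[
\mathrm{d}\xi(v)=\sum_{i}\mathrm{d}b_i(v)\,\omega_i(\tilde s)+\sum_{i}b_i(\tilde s)\,\mathrm{d}\omega_i(v)
\]
mixes the holomorphic $\mathrm{d}\xi$, $\mathrm{d}\omega_i$ with the merely real-analytic $b_i(\tilde s)$, so the rank deficit gives real-analytic, not algebraic, constraints on $\tilde Z$. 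Extracting algebraic ones along these lines is precisely the content of Condition ACZ in the Andr\'e--Corvaja--Zannier paper; it is not automatic, the paper emphasizes (in the introduction) that there are examples where it fails even with $\iota_S$ quasi-finite and $\dim\iota_S(S)\ge g$, and in any case it requires additional work specific to that setting rather than following from Ax-Schanuel alone. So your step ``therefore $\dim\tilde Z^{\Zar}<\dim T+g$'' does not follow, and the proposal as written has a hole at exactly the place you identify.

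The paper's proof circumvents this by never applying Ax-Schanuel to $\tilde\xi(\tilde S)$. Instead it exploits property (ii) of $i_\Delta$ below \eqref{EqiDelta}: the fibers of the Betti map, namely the sets $\{a\}\times\tilde S$, are \emph{complex analytic}. If $\mathrm{rank}(\mathrm{d}b)<2g$ at $(a,\tilde s)$ with $a$ non-rational and $\tilde s$ Hodge generic, then the complex analytic set $(\{a\}\times\tilde S)\cap\tilde Y$ has complex dimension $\ge\dim S-g+1\ge 1$; call a component through $(a,\tilde s)$ of the form $\{a\}\times\tilde C$. One now applies the weak Ax-Schanuel theorem to $\{a\}\times\tilde C$. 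The crucial gain is that $(\{a\}\times\tilde C)^{\Zar}=\{a\}\times\tilde C^{\Zar}$ is a genuine product (it is complex analytic and real algebraic, hence algebraic), so its dimension is trivially controlled by $\dim\tilde C^{\Zar}\le\dim\bu_G(\tilde C)^{\biZar}$, and the bi-algebraic closure $\bu(\{a\}\times\tilde C)^{\biZar}$ is forced to be the full $\mathfrak{A}_{g,D}|_{\bu_G(\tilde C)^{\biZar}}$ by Proposition~\ref{PropGeomDescriptionOfBiAlgebraic} together with simplicity and the non-rationality of $a$. Plugging into Theorem~\ref{ThmWASUnivAbVar} gives $\dim\bu_G(\tilde C)^{\Zar}\ge\dim S+1$, impossible since $\bu_G(\tilde C)^{\Zar}\subseteq S$. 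No Kodaira--Spencer computation, no equality case of Ax-Schanuel, and no analysis of $\tilde\xi(\tilde S)^{\Zar}$ is needed. If you want to salvage your route, you would essentially be re-proving Condition ACZ, which is a separate (and harder) task than the stated theorem; switch to applying Ax-Schanuel to the level sets instead.
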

Note that this theorem and the result of the ACZ paper do not imply each other. 
We also point out that no new contribution to Condition ACZ is made by this method.

\subsection*{A finiteness result \`{a} la Bogomolov}
Bogomolov \cite[Theorem~1]{BogomolovPoints-of-finit} proved the following finiteness result. Let $A$ be an abelian variety over $\C$ and let $Y \subseteq A$ be an irreducible subvariety, then there are finitely many abelian subvarieties $B$ of $A$ with $\dim B > 0$ satisfying: $x + B \subseteq Y$ for some $x \in A(\C)$, maximal for this property.

This finiteness property was extended by Ullmo to pure Shimura varieties \cite[Th\'{e}or\`{e}me~4.1]{UllmoQuelques-applic}, using o-minimal geometry, as an application of the pure Ax-Lindemann theorem. Later on it was extended by the author to mixed Shimura varieties \cite[Theorem~12.2]{GaoTowards-the-And} with a similar proof. The corresponding objects of $x+B$ in the Shimura case are the so-called \textit{weakly special subvarieties} defined by Pink \cite[Definition~4.1.(b)]{PinkA-Combination-o}, which are precisely the bi-algebraic subvarieties (see \cite{UllmoA-characterisat}, \cite[Corollary~8.3]{GaoTowards-the-And}). This finiteness result is useful for the proof of the Andr\'{e}-Oort conjecture.

On the other hand, in order to study the Zilber-Pink conjecture, which is a generalization of the Andr\'{e}-Oort conjecture, Habegger-Pila \cite{HabeggerO-minimality-an} introduced the notion of \textit{weakly optimal} subvarieties of a given subvariety of a mixed Shimura variety; see Definition~\ref{DefnWeaklyOptimalSubvarieties}. They also proved in \textit{loc.cit.} the natural generalization of the finiteness result for weakly optimal subvarieties in the cases of complex abelian varieties and $Y(1)^N$ (product of modular curves). This result is later on generalized by Daw-Ren to any pure Shimura variety \cite[Proposition~3.3]{DawRenAppOfAS}. A key point to pass from Ullmo's result to Daw-Ren's result is to apply the Ax-Schanuel theorem in lieu of the Ax-Lindemann theorem.

In this paper, we prove the corresponding finiteness result for weakly optimal subvarieties for $\mathfrak{A}_g$ (Theorem~\ref{ThmFinitenessAlaBogomolov}). The proof follows the guideline of Daw-Ren by plugging in the author's previous work on extending Ullmo's finiteness result to the mixed case. Denote by $(P_{2g,\mathrm{a}},\cX_{2g,\mathrm{a}}^+)$ the connected mixed Shimura datum of Kuga type associated with $\mathfrak{A}_g$; see $\mathsection$\ref{SubsectionUnivAbVar} for the notation.
\begin{thm}\label{ThmFinitenessAlaBogomolovIntro}
There exists a finite set $\Sigma$ consisting of elements of the form $\left( (Q,\cY^+),N \right)$, where $(Q,\cY^+)$ is a connected mixed Shimura subdatum of $(P_{2g,\mathrm{a}},\cX_{2g,\mathrm{a}}^+)$ and $N$ is a normal subgroup of $Q$ whose reductive part is semi-simple, such that the following property holds. If a closed irreducible subvariety $Z$ of $Y$ is weakly optimal, then there exists $\left( (Q,\cY^+),N \right) \in \Sigma$ such that $Z^{\biZar} = \bu(N(\R)^+\tilde{y})$ for some $\tilde{y} \in \cY^+$.
\end{thm}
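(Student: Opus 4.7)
The plan is to follow the strategy of Daw-Ren \cite{DawRenAppOfAS} for the pure case, plugging in the author's earlier extension of Ullmo's finiteness to mixed Shimura varieties \cite[Theorem~12.2]{GaoTowards-the-And}, and crucially replacing the pure Ax-Schanuel theorem by Theorem~\ref{ThmASUnivAbVarIntro}. The underlying geometric fact is the classification of bi-algebraic subvarieties of $\mathfrak{A}_g$ recalled in $\mathsection$\ref{SubsectionGeomDescriptionBiAlg}: each one is of the form $\bu(N(\R)^+\tilde{y})$ for a connected mixed Shimura subdatum $(Q,\cY^+)$ of $(P_{2g,\mathrm{a}},\cX_{2g,\mathrm{a}}^+)$ and a normal $\Q$-subgroup $N\trianglelefteq Q$ with semi-simple reductive part. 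Theorem~\ref{ThmFinitenessAlaBogomolovIntro} thus amounts to showing that, for fixed $Y$, only finitely many such pairs $((Q,\cY^+),N)$ arise, up to the arithmetic group action, as $Z$ ranges over weakly optimal subvarieties of $Y$.

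First I would fix an o-minimal structure such as $\R_{\mathrm{an},\exp}$ and a semi-algebraic fundamental set $\cF\subseteq\cX_{2g,\mathrm{a}}^+$ on which $\bu|_{\cF}$ is definable. The collection of bi-algebraic subvarieties can then be packaged as a definable family parametrized by Lie subalgebras of $\lie(P_{2g,\mathrm{a}})$ together with a base point of $\cX_{2g,\mathrm{a}}^+$. Given an irreducible closed $Z\subseteq Y$, one picks a complex analytic component $\tilde{Z}$ of $\bu^{-1}(Z)$ and forms $\mathscr{Z}=\mathrm{graph}(\bu)\cap(\tilde{Z}^{\Zar}\times Y)$ inside $\cX_{2g,\mathrm{a}}^+\times\mathfrak{A}_g$; the Habegger-Pila defect function for $Z$ in $Y$ can then be read off from the projections of $\mathscr{Z}$.

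The heart of the argument is to apply Theorem~\ref{ThmASUnivAbVarIntro} to $\mathscr{Z}$ exactly as Daw-Ren apply the pure Ax-Schanuel. Weak optimality of $Z$ prevents $\tilde{Z}^{\Zar}$ from being replaced by a smaller algebraic set inside $\bu^{-1}(Z^{\biZar})$ without also shrinking $Z^{\biZar}$; this translates, via the dimension inequality $\dim\mathscr{Z}^{\Zar}-\dim\mathscr{Z}\ge\dim Z^{\biZar}$ together with its equality clause, into the statement that $\tilde{Z}^{\Zar}$ is a complex analytic component of $\bu^{-1}(Z^{\biZar})$. Consequently the Lie algebra of $N$ is recoverable from the tangent space to $\tilde{Z}^{\Zar}$, and $(Q,\cY^+)$ is identified as the smallest Shimura subdatum whose $\bu$-image contains $Z^{\biZar}$. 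This is the step that turns the a~priori countable classification of bi-algebraic subvarieties into a condition compatible with o-minimal definability.

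Finiteness then follows by a standard definable-choice argument: the pairs $((Q,\cY^+),N)$ arising from weakly optimal $Z\subseteq Y$ form a definable subset of the parameter space of Lie subalgebras of $\lie(P_{2g,\mathrm{a}})$, and the arithmetic group $P_{2g,\mathrm{a}}(\Z)$ acts on this parameter space with discrete orbits, so a definable subset meets only finitely many orbits. The main obstacle I anticipate is the accurate bookkeeping for the non-pure (Kuga-type) layer: unlike in the setting of \cite{DawRenAppOfAS}, the normal subgroup $N$ can vary in its unipotent radical even when its reductive part is fixed, and one must verify that the equality clause of Theorem~\ref{ThmASUnivAbVarIntro} genuinely constrains the \emph{unipotent} part of $N$, not merely its image in $Q/R_u(Q)$. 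This is precisely where \cite[Theorem~12.2]{GaoTowards-the-And}, which already treats the mixed Ullmo-type finiteness, supplies the template to adapt.
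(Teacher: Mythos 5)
Your overall plan matches the paper's proof: apply Ax-Schanuel via the Habegger--Pila framework, parametrize candidate weakly special subvarieties definably, and conclude finiteness. However, there are a few places where the proposal glosses over steps that are nontrivial in the mixed setting and where your phrasing differs in substance from the argument that actually goes through.

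First, the paper does not appeal to the ``equality clause'' of Theorem~\ref{ThmASUnivAbVarIntro}. Instead, it uses the Habegger--Pila reformulation of the \emph{weak} Ax-Schanuel (Theorem~\ref{ThmASZariskiOptimal}): if $\tilde{Z}\subseteq\bu^{-1}(Y)$ is Zariski optimal, then $\tilde{Z}^{\Zar}$ is weakly special. This is the mechanism that forces bi-algebraicity of $\tilde{Z}^{\Zar}$; the equality clause is irrelevant here. Your plan ``recover $\lie N$ from the tangent space to $\tilde{Z}^{\Zar}$'' is not how the argument runs: one instead packages the weakly special subsets as $(\underline{g}\cdot\underline{t})\tilde{y}$, with $\underline{t}$ running over a finite set of $\cG$-orbit representatives of pairs $(V',G')$, $\underline{g}$ in a real group, and a shift $v\in V(\R)$ for the unipotent part (this is exactly where the mixed-specific bookkeeping you worry about enters).

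Second, your finiteness mechanism is not quite right. You invoke ``$P_{2g,\mathrm{a}}(\Z)$ acts with discrete orbits, so a definable subset meets finitely many orbits.'' The paper's argument (Lemma~\ref{LemmagtFinite}) is sharper: one constructs an explicit definable subset $\Xi$ of the parameter space such that every Zariski-optimal $\tilde{Z}$ meeting $\mathfrak{F}$ gives $\tilde{Z}^{\Zar}=(\underline{g}\cdot\underline{t})\tilde{y}$ with $(\tilde{y},\underline{g},\underline{t},v)\in\Xi$, then shows that the projection $\{(\underline{g}\cdot\underline{t},v)\}$ is \emph{countable} (because each such tuple arises from a $\Q$-group $N$ and a rational $v$) and \emph{definable} (as a projection of $\Xi$); countable plus definable gives finite. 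To make your orbit-counting version work, you would still need to explain why the relevant set of group-theoretic data is countable and why the definable set in question lies inside that countable set — which is the content of the paper's lemma — so you have not really saved any work.

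Third, you have omitted the bridge from weakly optimal subvarieties of $Y$ (which is what the theorem is about) to Zariski optimal subsets of $\bu^{-1}(Y)$ (which is what Ax-Schanuel speaks to). This is done at the end of $\mathsection$\ref{SectionAppToAFinitenessResult}: given $Z\subseteq Y$ weakly optimal, one picks a component $\tilde{Z}$ of $\bu^{-1}(Z)$ meeting $\mathfrak{F}$, takes a Zariski-optimal $\tilde{Z}'\supseteq\tilde{Z}$ with $\delta_{\Zar}(\tilde{Z}')\le\delta_{\Zar}(\tilde{Z})$, and uses the chain of inequalities $\delta_{\mathrm{ws}}(\bu(\tilde{Z}')^{\Zar})\le\delta_{\Zar}(\tilde{Z}')\le\delta_{\mathrm{ws}}(Z)$ plus weak optimality of $Z$ to conclude $\tilde{Z}=\tilde{Z}'$. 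Without this step the argument does not close. Finally, you need the auxiliary fact, from \cite[Lemma~12.1]{GaoTowards-the-And}, that each $N$ normalizes only finitely many subdata $(Q,\cY^+)$, to pass from finitely many $N$'s to a finite set $\Sigma$ of pairs; this also does not appear in your sketch.
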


\subsection*{Outline of the paper}
In $\mathsection$\ref{SectionUnivAbVar} we introduce, in an example based way, the basic knowledge of $\mathfrak{A}_g$ as a connected mixed Shimura variety. In the end of the section we give the definition and some basic properties (including the realization of the uniformizing space) of connected mixed Shimura varieties of Kuga type.

In $\mathsection$\ref{SectionStatementOfAS} we set up the framework for proving the mixed Ax-Schanuel theorem. We consider not only $\mathfrak{A}_g$, but also all connected mixed Shimura varieties of Kuga type.\footnote{In fact this makes the last step of the proof ($\mathsection$\ref{SectionEndOfProof}) easier.} In particular we review the bi-algebraic system associated with $\mathfrak{A}_g$ and the geometric/group-theoretic interpretation of bi-algebraic subvarieties of $\mathfrak{A}_g$.

The proof of the mixed Ax-Schanuel theorem is done in $\mathsection$\ref{SectionSetup}-$\mathsection$\ref{SectionEndOfProof}. In $\mathsection$\ref{SectionSetup}, 
we fix the basic setup and summarize some  results for the pure part. In particular we cite the volume bounds for pure Shimura varieties. In $\mathsection$\ref{SectionQStabBig} we do the necessary d\'{e}vissage and reduce the mixed Ax-Schanuel theorem to the case where we have a big $\Q$-stabilizer. Apart from the proof of Theorem~\ref{ThmBignessOfQStab}, this section is a standard argument. Then in $\mathsection$\ref{SectionQStabNormal} we prove that the $\Q$-stabilizer can be assumed to be normal in $P$. Here $\mathsection$\ref{SubsectionAlgFamilyAssoWithB}-\ref{SubsectionFirstStepTowardsNormality} are simply the argument of \cite{MokAx-Schanuel-for} adapted to the mixed case, although unlike the pure case we need to be careful with the monodromy group and the Mumford-Tate group. The argument of $\mathsection$\ref{SubsectionLastStepTowardsNormality} is new. 
Then in $\mathsection$\ref{SectionEndOfProof} we finish the proof.

The finiteness result \textit{\`{a} la Bogomolov} is proven in $\mathsection$\ref{SectionAppToAFinitenessResult}. The application to the rank of the Betti map is presented in $\mathsection$\ref{SectionAppToBettiRank}.

At this stage it is worth making some extra comments. The extension of the pure Ax-Schanuel theorem to the mixed one in this paper does \textbf{not} follow the same guideline as the author's previous work on the extension of Ax-Lindemann \cite{GaoTowards-the-And}. In both cases one studies some complex analytic irreducible subset $\tilde{Z}$ in the uniformizing space. For the Ax-Lindemann theorem $\tilde{Z}$ is assumed to be algebraic. So its pure part, being also algebraic, must hit the boundary of the bounded symmetric domain. Thus its pure part is open in a compact set. As we have the freedom to choose $\tilde{Z}$ to have relative dimension $0$ over its pure part in the proof of the Ax-Lindemann theorem, the ``vertical'' direction is uniformly bounded. Thus in \cite[$\mathsection$9-10]{GaoTowards-the-And}, no extra estimate beyond the pure part is needed, and it can be shown that the reductive part of the $\Q$-stabilizer is \textit{a priori} as big as possible. However for the Ax-Schanuel theorem, the pure part of $\tilde{Z}$ does not necessarily hit the boundary, so in order to get any meaningful estimate we need to compare the growth of $\tilde{Z}$ in the \textit{vertical} direction with its growth in the \textit{horizontal} direction. It is impossible to prove the bigness of the reductive part of the $\Q$-stabilizer directly. To solve this problem, we argue as in $\mathsection$\ref{SubsectionLastStepTowardsNormality}. This subsection is not needed to prove the pure Ax-Schanuel theorem because for a pure Shimura datum $(G,\cX^+_G)$, any normal subgroup $N$ of $G$ induces a decomposition of $(G,\cX^+_G)$.

As for the comparison of the growth of $\tilde{Z}$ in the two directions, we have to consider two cases:
the \textit{vertical} direction of $\tilde{Z}$ grows at most polynomially in terms of its growth in the \textit{horizontal} direction, and vice-versa. The former case can be settled by estimates on the pure part without much effort. To solve the latter case, we use a variant of Tsimerman's idea \textit{polynomial for free in unipotent groups} in \cite[pp.~3]{TsimermanAx-Schanuel-and}. See the proof of Theorem~\ref{ThmBignessOfQStab} for more details.

\subsection*{Acknowledgements}
The author would like to thank Ngaiming Mok, Jonathan Pila and Jacob Tsimerman for numerous valuable discussions on the subject and for sending their preprint on the pure Ax-Schanuel theorem, and especially Jacob Tsimerman for pointing out the variant of the  polynomial-for-free argument in Theorem~\ref{ThmBignessOfQStab}. Part of the work was done when the author was a long-term visitor at the Fields Institute under the \textit{Thematic Program on Unlikely Intersections, Heights, and Efficient Congruencing}. The author would like to thank the organizing committee for the invitation and the program for the financial support. The author would also like to particularly thank Ngaiming Mok for his great help on complex geometry and his invitation to the University of Hong Kong. The author benefited a lot from the stimulating atmosphere. The author would like to thank Daniel Bertrand, Jacob Tsimerman and Umberto Zannier for their comments, and especially Ngaiming Mok for pointing out a serious mistake in a previous version of the paper. The author would like to thank the anonymous referees for their careful reading and suggestions to improve the paper.

\section{Universal abelian variety}\label{SectionUnivAbVar}
We recall in this section some basic knowledge of mixed Shimura varieties of Kuga type. In particular we explain how the universal abelian variety fits in this language. In the end we fix some notation for the paper.

\subsection{Moduli space of abelian varieties}
Let $g \ge 1$ be an integer. Let $D = \mathrm{diag}(d_1,\ldots,d_g)$ with $d_1|\cdots|d_g$ be positive integers. Let $N \ge 1$ be an integer. 
Let $\A_{g,D}(N)$ be the moduli space of abelian varieties of dimension $g$ which are polarized of type $D$ equipped with level-$N$-structures.

It is well-known that $\A_{g,D}$ is a connected pure Shimura variety, associated with the connected pure Shimura datum $(\GSp_{2g,D},\mathfrak{H}_g^+)$; see \cite[$\mathsection$1.2]{GenestierNgo}. We hereby give a quick summary of this fact.

Let $V_{2g}$ be a $\Q$-vector space of dimension $2g$, and let 
\begin{equation}\label{EqAltForm}
\Psi \colon V_{2g} \times V_{2g} \rightarrow \G_{a,\Q}, \quad (v_1,v_2) \mapsto v_1^{\!^{\intercal}} \begin{pmatrix} 0 & D \\ -D & 0 \end{pmatrix} v_2
\end{equation}
be a non-degenerate alternating form. Then the $\Q$-group $\GSp_{2g,D}$ is defined by
\begin{align*}
\GSp_{2g,D} & =\{h \in \GL(V_{2g}) : \Psi(hv,hv') = \nu(h)\Psi(v,v')\text{ for some }\nu(h) \in \G_m\} \\
& =\left\{ h \in \GL_{2g} : h \begin{pmatrix} 0 & D \\ -D & 0 \end{pmatrix} h^{\!^{\intercal}} = \nu(h) \begin{pmatrix} 0 & D \\ -D & 0 \end{pmatrix},~ \nu(h) \in \G_m \right\}.
\end{align*}
Let $\Sp_{2g,D} = \GSp_{2g,D}^{\mathrm{der}}$.

Let $\mathfrak{H}_g^+$ be the Siegel upper half-space
\[
\{ Z = X + \sqrt{-1}Y \in M_{g \times g}(\C) : Z = Z^{\!^{\intercal}},~ Y > 0\}.
\]
Then $\GSp_{2g,D}(\R)^+$ acts on $\mathfrak{H}_g^+$ by the formula
\[
\begin{pmatrix} A' & B' \\ C' & D' \end{pmatrix} Z = (A'Z+B')(C'Z+D')^{-1}, \quad \forall \begin{pmatrix} A' & B' \\ C' & D' \end{pmatrix} \in \GSp_{2g,D}(\R)^+\text{ and }Z \in \mathfrak{H}_g^+.
\]
It is known that the action of $\GSp_{2g,D}(\R)^+$ on $\mathfrak{H}_g^+$ thus defined is transitive.

The natural inclusion $\mathfrak{H}_g^+ \subseteq \{ Z = X + \sqrt{-1}Y \in M_{g \times g}(\C) : Z = Z^{\!^{\intercal}}\} \cong \C^{g(g+1)/2}$ realizes $\mathfrak{H}_g^+$ as an open (in the usual topology) semi-algebraic subset of $\C^{g(g+1)/2}$. Hence this inclusion endows $\mathfrak{H}_g^+$ with a complex structure. 

Let $\Gamma_{\Sp_{2g,D}}(N)= \{ h \in \GSp_{2g,D}(\Z): h \equiv I_{2g} \pmod{N} \}$. Then $\A_{g,D}(N) \cong \Gamma_{\Sp_{2g,D}}(N) \backslash \mathfrak{H}_g^+$ as complex varieties. Thus we obtain a uniformization in the category of complex varieties
\begin{equation}\label{EqUniformizationModuliSpace}
\mathfrak{H}_g^+ \rightarrow \A_{g,D}(N).
\end{equation}

\subsection{Universal abelian variety}\label{SubsectionUnivAbVar}
Use the notation of the previous subsection. If furthermore $N\ge 3$, then $\A_{g,D}(N)$ is a fine moduli space and hence admits a universal family, which we call $\mathfrak{A}_{g,D}(N)$. We use $\pi \colon \mathfrak{A}_{g,D}(N) \rightarrow \A_{g,D}(N)$ to denote the natural projection.

The variety $\mathfrak{A}_{g,D}(N)$ is an example of a connected mixed Shimura variety (of Kuga type). We hereby give a construction of the connected mixed Shimura datum of Kuga type $(P_{2g,D,\mathrm{a}},\cX_{2g,\mathrm{a}}^+)$ associated with $\mathfrak{A}_{g,D}(N)$.

Recall that $V_{2g}$ is a $\Q$-vector space of dimension $2g$. By abuse of notation we also use it to denote the $\Q$-vector group of dimension $2g$. Then the natural action of $\GSp_{2g,D}$ on $V_{2g}$ defines a $\Q$-group
\[
P_{2g,D,\mathrm{a}} = V_{2g} \rtimes \GSp_{2g,D}.
\]
It is not hard to see that the unipotent radical $\cR_u(P_{2g,D,\mathrm{a}})$ of $P_{2g,D,\mathrm{a}}$ is $V_{2g}$, and the reductive part $P_{2g,D,\mathrm{a}}/\cR_u(P_{2g,D,\mathrm{a}})$ of $P_{2g,D,\mathrm{a}}$ is $\GSp_{2g,D}$.

The space $\cX_{2g,\mathrm{a}}^+$ is constructed as follows.
\begin{enumerate}
\item[(i)] As a set, $\cX_{2g,\mathrm{a}}^+ = V_{2g}(\R) \times \mathfrak{H}_g^+$.
\item[(ii)] The action of $P_{2g,D,\mathrm{a}}(\R)^+$ on $\cX_{2g,\mathrm{a}}^+$ is defined as follows: for any $(v,h) \in P_{2g,D,\mathrm{a}}(\R)^+ = V_{2g}(\R) \rtimes \GSp_{2g,D}(\R)^+$ and any $(v',x) \in \cX_{2g,\mathrm{a}}^+$, we have
\begin{equation}\label{EqActionOfP2gOnX2g}
(v,h) \cdot (v',x) = (v+hv',hx).
\end{equation}
This action is transitive.
\item[(iii)] Fix a Lagrangian decomposition of $V_{2g}(\R) = \R^{2g} \cong \R^g \times \R^g$. The complex structure of $\cX_{2g,\mathrm{a}}^+$ is the one given by the pullback of the following map
\begin{equation}\label{EqComplexStrOfX2g}
\begin{array}{cccc}
\cX_{2g,\mathrm{a}}^+ = & \R^g \times \R^g \times \mathfrak{H}_g^+ & \xrightarrow{\sim} & \C^g \times \mathfrak{H}_g^+, \\
& (a,b,Z) & \mapsto & (Da+Zb, Z)
\end{array}.
\end{equation}
\end{enumerate}

The pair $(P_{2g,D,\mathrm{a}},\cX_{2g,\mathrm{a}}^+)$ is a connected mixed Shimura datum as defined by Pink; see \cite[Construction~2.9, Example~2.12]{PinkA-Combination-o} and \cite[2.25]{PinkThesis}. There is a natural morphism
\[
\tilde{\pi} \colon (P_{2g,D,\mathrm{a}},\cX_{2g,\mathrm{a}}^+) \rightarrow (\GSp_{2g,D},\mathfrak{H}_g^+)
\]
induced by the projection $P_{2g,D,\mathrm{a}} = V_{2g} \rtimes \GSp_{2g,D} \rightarrow \GSp_{2g,D}$. This is a Shimura morphism as defined by Pink \cite[Definition~2.5]{PinkA-Combination-o} or \cite[2.3]{PinkThesis}.

Let $\Gamma_{P_{2g,D,\mathrm{a}}}(N) = N V(\Z) \rtimes \Gamma_{\Sp_{2g,D}}(N)$. Then as complex varieties we have $\mathfrak{A}_g(N) \cong \Gamma_{P_{2g,D,\mathrm{a}}}(N) \backslash \cX_{2g,\mathrm{a}}^+$. This makes $\mathfrak{A}_{g,D}(N)$ a connected mixed Shimura variety. See \cite[Example~2.12]{PinkA-Combination-o} or \cite[10.5, 10.9, 10.10]{PinkThesis}. We thus obtain a uniformization in the category of complex varieties
\begin{equation}\label{EqUnifUniversalAbVar}
\cX_{2g,\mathrm{a}}^+ \rightarrow \mathfrak{A}_{g,D}(N).
\end{equation}
Now we have the following commutative diagram
\begin{equation}\label{DiagramUnivAbVarAndModuliSpace}
\xymatrix{
(P_{2g,D,\mathrm{a}},\cX_{2g,\mathrm{a}}^+) \ar[r]^-{\tilde{\pi}} \ar[d]_{\eqref{EqUnifUniversalAbVar}} & (\GSp_{2g,D},\mathfrak{H}_g^+) \ar[d]^{\eqref{EqUniformizationModuliSpace}} \\
\mathfrak{A}_{g,D}(N) \ar[r]^{\pi} & \A_{g,D}(N)
}
\end{equation}
where for simplicity of notation the vertical maps are the uniformizations we discussed above.

\subsection{A Hodge-theoretic point of view on $\cX_{2g,\mathrm{a}}^+$}\label{SubsectionHodgeTheoreticX2ga}
Let us take a closer look at $\cX_{2g,\mathrm{a}}^+$. We work with the category of analytic objects in this section. For future purpose, we make the situation slightly more general. Let $\Delta$ be a simply-connected complex space, and let $\pi_{\Delta} \colon \cA_{\Delta} \rightarrow \Delta$ be a polarized family of abelian varieties of dimension $g \ge 1$. Use $\psi$ to denote this polarization.


Let $\underline{\mathbb{Z}}_{\Delta}$ (resp. $\underline{\mathbb{C}}_{\Delta}$) be the constant local system of $\Z$-rank $1$ (resp. of $\C$-dimension $1$)
 on $\Delta$. Let $\underline{\mathbb{Z}}_{\cA_\Delta}$ (resp. $\underline{\mathbb{C}}_{\cA_\Delta}$) be the constant local system of $\Z$-rank $1$ (resp. of $\C$-dimension $1$) on $\cA_\Delta$.

Then $R_1(\pi_{\Delta})_*\underline{\mathbb{Z}}_{\cA_\Delta}$, which we define as the dual of $R^1(\pi_{\Delta})_*\underline{\mathbb{Z}}_{\cA_\Delta}$, is a local system on $\Delta$ such that $(R_1(\pi_{\Delta})_*\underline{\mathbb{Z}}_{\cA_\Delta})_s = H_1(\cA_s,\mathbb{Z})$ for any $s \in \Delta$. It defines a variation of $\mathbb{Z}$-Hodge structures over $\Delta$ of type $\{(0,-1),(-1,0)\}$ in the following way: the $\mathcal{O}_{\Delta}$-sheaf $\mathcal{V}_{\Delta}=R_1(\pi_{\Delta})_*\underline{\mathbb{Z}}_{\cA_\Delta} \otimes_{\underline{\mathbb{Z}}_{\Delta}} \mathcal{O}_{\Delta} \cong \mathcal{H}^1_{\mathrm{dR}}(\cA_{\Delta}/\Delta)^\vee$ is locally free, and has a locally free $\mathcal{O}_{\Delta}$-subsheaf $\mathcal{F}^0\mathcal{V}_{\Delta}$ defined by $(\mathcal{F}^0\mathcal{V}_{\Delta})_s = H^{0,1}(\cA_s)^\vee$ for any $s\in \Delta$. From this we see that $\mathcal{V}_{\Delta}/\mathcal{F}^0\mathcal{V}_{\Delta} \cong (\Omega^1_{\cA_{\Delta}/\Delta})^\vee$. Note that $R_1(\pi_{\Delta})_*\underline{\mathbb{Z}}_{\cA_\Delta}$ and $\mathcal{F}^0\mathcal{V}_{\Delta}$ are subsheaves of $\mathcal{V}_{\Delta}$, and
\begin{equation}\label{EquationTrivialIntersectionSheaves}
R_1(\pi_{\Delta})_*\underline{\mathbb{Z}}_{\cA_\Delta} \cap \mathcal{F}^0\mathcal{V}_{\Delta} \text{ is trivial, namely is the constant sheaf }\underline{0}_{\Delta}.
\end{equation}

Let $\mathbf{V}_{\Delta}$ and $\mathbf{F}^0\mathbf{V}_{\Delta}$ be the vector bundles associated with the locally free sheaves $\mathcal{V}_{\Delta}$ and $\mathcal{F}^0\mathcal{V}_{\Delta}$. Then the discussion in the previous paragraph yields an exact sequence of holomorphic vector bundles on $\Delta$
\begin{equation}\label{EquationRelativeLieAlgebra}
0 \rightarrow \mathbf{F}^0\mathbf{V}_{\Delta} \rightarrow \mathbf{V}_{\Delta} \rightarrow \mathrm{Lie}(\cA_{\Delta}/\Delta) \rightarrow 0.
\end{equation}

Next we explain that there exists a real analytic diffeomorphism $\mathrm{Lie}(\cA_{\Delta}/\Delta) \cong \R^{2g} \times \Delta$.



Note that $R_1(\pi_{\Delta})_*\underline{\mathbb{C}}_{\cA_\Delta} = R_1(\pi_{\Delta})_*\underline{\mathbb{Z}}_{\cA_\Delta} \otimes_{\underline{\mathbb{Z}}_{\Delta}} \underline{\mathbb{C}}_{\Delta}$ is the local system on $\Delta$ of the $\mathbb{C}$-dimension $2g$. There is a unique vector bundle with flat connection associated with it. Since $\Delta$ is simply connected, this vector bundle with flat connection is trivial. In other words we have $\mathbf{V}_{\Delta} \cong \mathbb{C}^{2g} \times \Delta$ and $R_1(\pi_{\Delta})_*\underline{\mathbb{C}}_{\cA_\Delta}$ is the subsheaf of $\mathcal{V}_{\Delta}$ consisting of constant sections (recall that $\mathcal{V}_{\Delta}$ is the sheaf of sections of the vector bundle $\mathbf{V}_{\Delta}$ over $\Delta$). So the constant sheaf $R_1(\pi_{\Delta})_*\underline{\mathbb{Z}}_{\cA_\Delta}$ is the subsheaf of $\mathcal{V}_{\Delta}$ consisting of the sections of $\mathbf{V}_{\Delta}$ over $\Delta$ with image in $\mathbb{Z}^{2g} \times \mathcal{V}_{\Delta}$ (these sections are constant since $\mathbb{Z}^{2g}$ is discrete). So the inclusion of sheaves $R_1(\pi_{\Delta})_*\underline{\mathbb{Z}}_{\cA_\Delta} \subseteq \mathcal{V}_{\Delta}$ corresponds to the natural inclusion $\mathbb{Z}^{2g} \times \Delta \subseteq \mathbb{C}^{2g} \times \Delta \cong \mathbf{V}_{\Delta}$. The intersection of $\mathbb{Z}^{2g} \times \Delta$ and $\mathbf{F}^0\mathbf{V}_{\Delta}$ in $\mathbf{V}_{\Delta}$ is the zero section of $\mathbf{V}_{\Delta} \rightarrow \Delta$ by \eqref{EquationTrivialIntersectionSheaves}.

Thus we obtain an injective map $\mathbb{Z}^{2g} \times \Delta \rightarrow \mathrm{Lie}(\cA_{\Delta}/\Delta)$ as the composite
\[
\mathbb{Z}^{2g} \times \Delta \subseteq \mathbb{C}^{2g} \times \Delta \cong \mathbf{V}_{\Delta} \rightarrow \mathbf{V}_{\Delta}/\mathbf{F}^0\mathbf{V}_{\Delta} \cong \mathrm{Lie}(\cA_{\Delta}/\Delta).
\]
See \eqref{EquationRelativeLieAlgebra} for the last equality. This injective map extends to
\begin{equation}\label{EquationBettiFirstStep}
\mathbb{R}^{2g} \times \Delta \subseteq \mathbb{C}^{2g} \times \Delta \cong \mathbf{V}_{\Delta}  \rightarrow  \mathbf{V}_{\Delta} / \mathbf{F}^0\mathbf{V}_{\Delta}  \cong \mathrm{Lie}(\cA_{\Delta}/\Delta),
\end{equation}
where the first inclusion is obtained by identifying $\mathbb{R}^{2g}$ as the real part of $\mathbb{C}^{2g}$. Note that apart from the first inclusion, every morphism in this composition is holomorphic.

Denote by
\begin{equation}\label{EqiDelta}
i_{\Delta} \colon \R^{2g} \times \Delta \rightarrow \mathrm{Lie}(\cA_{\Delta}/\Delta)
\end{equation}
the composite of the maps in \eqref{EquationBettiFirstStep}. Then the following properties  clearly hold true.
\begin{enumerate}
\item[(i)] The map $i_{\Delta}$ is a real analytic diffeomorphism. It is semi-algebraic if $\Delta$ has a semi-algebraic structure;
\item[(ii)] For any $a \in \mathbb{R}^{2g}$, the image $i_{\Delta}(\{a\} \times \Delta)$ is complex analytic;
\item[(iii)] Over each $s \in \Delta$, the induced map $(i_{\Delta})_s \colon \mathbb{R}^{2g} \rightarrow \mathrm{Lie}(\cA_{\Delta}/\Delta)_s$ is a group homomorphism.
\end{enumerate}

We end this subsection by relating it to the last subsection. Take $\Delta = \mathfrak{H}_g^+$ and $\cA_{\Delta}$ to be the pullback of $\mathfrak{A}_{g,D}(N) \rightarrow \A_{g,D}(N)$ under the uniformization $\mathfrak{H}_g^+ \rightarrow \A_{g,D}(N)$ in \eqref{EqUniformizationModuliSpace}. Denote by $\cX_{2g,\mathrm{a}}^+ := \mathrm{Lie}(\cA_{\Delta}/\Delta)$ for this choice. The complex space $\cX_{2g,\mathrm{a}}^+$ thus obtained coincides with the one defined in $\mathsection$\ref{SubsectionUnivAbVar}. Indeed, fix a Lagrangian decomposition $V_{2g}(\R) = \R^{2g} \cong \R^g \times \R^g$ and write
\begin{equation}\label{EqTwoComplexStrOfX2g}
\begin{array}{cccc}
\cX_{2g,\mathrm{a}}^+ \xleftarrow[i_{\mathfrak{H}_g^+}]{\sim} & V_{2g}(\R) \times \mathfrak{H}_g^+ = \R^g \times \R^g \times \mathfrak{H}_g^+ & \xrightarrow{\sim} & \C^g \times \mathfrak{H}_g^+ \\
& (a,b,Z) & \mapsto & (Da+Zb, Z)
\end{array}.
\end{equation}
From this we obtain $\rho_g \colon \cX_{2g,\mathrm{a}}^+ \rightarrow \C^g \times \mathfrak{H}_g^+$. Then it is not hard to see that $\rho_g$ is an isomorphism of complex spaces. Since $i_{\mathfrak{H}_g^+}$ is semi-algebraic, we can conclude that the $\cX_{2g,\mathrm{a}}^+$ defined above is the same complex space as the one defined in $\mathsection$\ref{SubsectionUnivAbVar} in view of \eqref{EqComplexStrOfX2g}.

\subsection{Mixed Shimura variety of Kuga type}
In this subsection we recall the definition and some basic properties of mixed Shimura varieties of Kuga type. Let $\S = \mathrm{Res}_{\C/\R}\G_{m,\C}$ be the Deligne torus.
\begin{defn}[{\cite[Definition~2.1]{PinkA-Combination-o}}]\label{DefnMixedShimuraDatum}
A \textbf{connected mixed Shimura datum of Kuga type} is a pair $(P,\cX^+)$ where
\begin{itemize}
\item $P$ is a connected linear algebraic group over $\Q$ whose unipotent radical $V$ is a vector group,
\item $\cX^+ \subseteq \hom(\S,P_{\R})$ is a left homogeneous space under $P(\R)^+$,
\end{itemize}
such that for one (and hence for all) $x \in \cX^+$, we have
\begin{enumerate}
\item[(i)] the adjoint representation induces on $\lie P$ a rational mixed Hodge structure of type
\[
\{(-1,1),(0,0),(1,-1)\} \cup \{(-1,0),(0,-1)\},
\]
\item[(ii)] the weight filtration on $\lie P$ is given by
\[
W_n(\lie P) = \begin{cases}  0 & \text{if }n < -1 \\ \lie V & \text{if } n = -1 \\ \lie P & \text{if } n \ge 0 \end{cases},
\]
\item[(iii)] the conjugation by $x(\sqrt{-1})$ induces a Cartan involution on $G^{\mathrm{ad}}_{\R}$ where $G = P/V$, and $G^{\mathrm{ad}}$ posses no $\Q$-factor $H$ such that $H(\R)$ is compact,
\item[(iv)] the group $P/P^{\mathrm{der}} = Z(G)$ is an almost direct product of a $\Q$-split torus with a torus of compact type defined over $\Q$,
\item[(v)] $P$ possesses no proper normal subgroup $P'$ such that $x$ factors through $P'_{\R} \subseteq P_{\R}$.
\end{enumerate}
If in addition $P$ is reductive, then $(P,\cX^+)$ is called a \textbf{connected pure Shimura datum}.
\end{defn}

\begin{rmk}
\begin{enumerate}
\item By our convention, every connected pure Shimura datum is a connected mixed Shimura datum of Kuga type. The pair $(P_{2g,D,\mathrm{a}},\cX_{2g,\mathrm{a}}^+)$ in $\mathsection$\ref{SubsectionUnivAbVar} is an example of a connected mixed Shimura datum of Kuga type, which is not pure.
\item Conditions (i) and (ii) imply that $\dim V$ is even.
\item Definition~\ref{DefnMixedShimuraDatum} is a particular case of \cite[Definition~2.1]{PinkA-Combination-o}. The difference is that the subgroup $U$ in $\cR_u(P)$ in \textit{loc.cit.} is trivial in our case (connected mixed Shimura datum of Kuga type), hence property (i) of Definition~2.1 of \textit{loc.cit.} is not needed here.
\item Condition (iv) of Definition~\ref{DefnMixedShimuraDatum} implies that every sufficiently small congruence subgroup $\Gamma$ of $P(\Q)$ is contained in $P^{\mathrm{der}}(\Q)$; see \cite[proof of 3.3(a)]{PinkThesis}. Fix a Levi decomposition $P = V\rtimes G$, then $P^{\mathrm{der}} = V \rtimes G^{\mathrm{der}}$, and hence any congruence subgroup $\Gamma < P^{\mathrm{der}}(\Q)$ is Zariski dense in $P^{\mathrm{der}}$ by condition (iii); see \cite[Theorem~4.10]{AlgebraicGroupBible}.
\item Condition (v) is what Pink calls ``irreducible'' in \cite[2.13]{PinkThesis}. It means precisely that $P$ is the Mumford-Tate group of a generic choice of $x \in \cX^+$. For the purpose of studying transcendence results or unlikely intersections, it is harmless to consider only this kind of connected mixed Shimura data. Hence we put this condition in the definition as Pink does in \cite{PinkA-Combination-o}.
\item The space $\cX^+$ has a unique structure of complex manifold such that for every representation $\rho \colon P \rightarrow \GL(W)$, the Hodge filtration determined by $\rho \circ x$ varies holomophically with $x \in \cX^+$. In particular this complex structure is invariant under $P(\R)^+$. See \cite[Fact~2.3(b)]{PinkA-Combination-o} or \cite[1.18]{PinkThesis}. We will also give more details on this in $\mathsection$\ref{SubsectionRealizationOfX}.
\end{enumerate}
\end{rmk}

Let $(P,\cX^+)$ be a connected mixed Shimura datum of Kuga type. 
The following results are proven by Pink; see \cite[Fact~2.3(c)-(e)]{PinkA-Combination-o} or \cite[3.3, 9.24]{PinkThesis}. For any congruence subgroup $\Gamma \subseteq P(\Q) \cap P(\R)_+$, where $P(\R)_+$ is the stabilizer of $\cX^+ \subseteq \hom(\S,P_{\R})$, the quotient $\Gamma\backslash\cX^+$ is a complex analytic space with at most finite quotient singularities, and has a natural structure of a quasi-projective algebraic variety over $\C$. Moreover it is smooth if $\Gamma$ is sufficiently small.

\begin{defn}
A \textbf{connected mixed Shimura variety of Kuga type} $M$ associated with $(P,\cX^+)$ is the quotient $\Gamma \backslash \cX^+$ from above. Then we have a uniformization in the category of complex analytic spaces $\bu \colon \cX^+ \rightarrow M$.
\end{defn}

\begin{defn}\label{DefnShimuraMorphism}
\begin{enumerate}
\item[(i)] A \textbf{(Shimura) morphism} of connected mixed Shimura data of Kuga type $\tilde{\psi} \colon (Q,\cY^+) \rightarrow (P,\cX^+)$ is a homomorphism $\tilde{\psi} \colon Q \rightarrow P$ of algebraic groups over $\Q$ which induces a map $\cY^+ \rightarrow \cX^+$, $y \mapsto \tilde{\psi} \circ y$. 
\item[(ii)] In particular, if $\tilde{\psi}$ is an inclusion (on the group and the underlying space), then we say that $(Q,\cY^+)$ is a \textbf{connected mixed Shimura subdatum} of $(P,\cX^+)$.
\item[(iii)] A \textbf{Shimura morphism} of connected mixed Shimura varieties of Kuga type  is a morphism of algebraic varieties induced by a Shimura morphism of connected mixed Shimura data of Kuga type.
\item[(iv)] In particular given $M$ a connected mixed Shimura variety of Kuga type associated with $(P,\cX^+)$, the subvarieties of $M$ coming from connected mixed Shimura subdata of $(P,\cX^+)$ are called \textbf{connected mixed Shimura subvarieties}.
\end{enumerate}
\end{defn}
Pink proved that every Shimura morphism of connected mixed Shimura varieties of Kuga type is algebraic. See \cite[Facts~2.6]{PinkA-Combination-o} or \cite[3.4, 9.24]{PinkThesis}.

The morphism $\tilde{\pi}$ in \eqref{DiagramUnivAbVarAndModuliSpace} is an example of Shimura morphism of connected mixed Shimura data of Kuga type. It induces a Shimura morphism $\pi$ of connected mixed Shimura varieties of Kuga type.

Before going on, let us make the following remark. The notion ``connected mixed Shimura subdatum'' exists \textit{a priori} for any connected mixed Shimura datum $(P,\cX^+)$, not necessarily of Kuga type. However if $(P,\cX^+)$ is of Kuga type, then all its connected mixed Shimura subdata \textit{a priori} defined are again of Kuga type by reason of weight; see \cite[Proposition~2.9]{GaoTowards-the-And}. Hence our Definition~\ref{DefnShimuraMorphism}(ii) is compatible with the usual convention.

The following fact is proven by Pink \cite[2.9]{PinkThesis}.
\begin{fact}
Let $(P,\cX^+)$ be a connected mixed Shimura datum of Kuga type. Let $P_0$ be a normal subgroup of $P$. Then there exists a quotient connected mixed Shimura datum of Kuga type $(P,\cX^+)/P_0$ and a Shimura morphism $(P,\cX^+) \rightarrow (P,\cX^+)/P_0$, unique up to isomorphism, such that every Shimura morphism $(P,\cX^+) \rightarrow (P',\cX^{\prime+})$, where the homomorphism $P \rightarrow P'$ factors through $P/P_0$, factors in a unique way through $(P,\cX^+)/P_0$. 
\end{fact}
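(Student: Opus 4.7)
The plan is to take $\phi\colon P \twoheadrightarrow P':=P/P_0$ the quotient homomorphism and define $\cY^+$ as the image of $\cX^+$ under post-composition with $\phi$, viewed as a subset of $\hom(\S,P'_\R)$. Since $P(\R)^+$ acts transitively on $\cX^+$ and $\phi$ maps $P(\R)^+$ onto the identity component of $P'(\R)^+$ (up to a finite-index check), $\cY^+$ is a $P'(\R)^+$-homogeneous space, and the assignment $\tilde\phi\colon\cX^+\to\cY^+$, $x \mapsto \phi\circ x$, is $\phi$-equivariant.

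Next, I would verify the five axioms of Definition~\ref{DefnMixedShimuraDatum} for $(P',\cY^+)$. For any $y = \phi\circ x$ with $x \in \cX^+$: (i)--(ii) the unipotent radical of $P'$ is $V' := V/(V\cap P_0)$, which is again a vector group; the mixed Hodge structure on $\lie P'$ is the quotient of that on $\lie P$, so its type is a subset of $\{(-1,1),(0,0),(1,-1),(-1,0),(0,-1)\}$ and its weight filtration is the quotient filtration, with $W_{-1}(\lie P') = \lie V'$. (iii) Writing $G = P/V$ and $G' = P'/V'$, the adjoint group $G'^{\mathrm{ad}}$ is a product of a subset of the $\Q$-simple factors of $G^{\mathrm{ad}}$, so it inherits the Cartan involution property and the absence of $\Q$-factors with compact real points. (iv) $Z(G')$ is a quotient of $Z(G)$, hence remains an almost direct product of a $\Q$-split torus and a torus of compact type. (v) For a Hodge-generic $x \in \cX^+$ the Mumford--Tate group equals $P$, and taking the image under $\phi$ shows that the Mumford--Tate group of $\phi\circ x$ is $\phi(P) = P'$.

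I would then equip $\cY^+$ with its unique complex structure coming from the variation of Hodge filtrations on representations of $P'$ (\cite[Fact~2.3(b)]{PinkA-Combination-o}). The map $\tilde\phi$ is holomorphic because on any representation $\rho'\colon P' \to \GL(W)$ the Hodge filtration determined by $\rho'\circ\phi\circ x$ agrees with that of $\rho'\circ\phi\circ x$ viewed as coming from a $P$-representation. Hence $\tilde\phi$ descends to a Shimura morphism $(P,\cX^+) \to (P',\cY^+)$ in the sense of Definition~\ref{DefnShimuraMorphism}.

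For the universal property, let $\tilde\psi\colon(P,\cX^+)\to(P'',\cX''^+)$ be a Shimura morphism whose underlying group homomorphism factors as $\tilde\psi = \alpha\circ\phi$ for some $\alpha\colon P' \to P''$. Define $\cY^+ \to \cX''^+$ by $\phi\circ x \mapsto \alpha\circ\phi\circ x = \tilde\psi\circ x$; this is well-defined because if $\phi\circ x_1 = \phi\circ x_2$ then $\tilde\psi\circ x_1 = \tilde\psi\circ x_2$. Uniqueness of the factorization and of $(P',\cY^+)$ up to isomorphism then follows formally. The main obstacle in executing this plan is axiom checking: verifying (iii) and (iv) for $(P',\cY^+)$ requires a careful description of the reductive quotient of $P'$, and confirming (v) requires the observation that Mumford--Tate groups are preserved under taking images, which is clean but should be spelled out. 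Constructing the complex structure so that $\tilde\phi$ is holomorphic is standard but deserves explicit mention.
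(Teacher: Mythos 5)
Your proposal is essentially correct and follows the only natural construction: define $\cY^+$ as the set of post-compositions $\phi\circ x$ for $x\in\cX^+$, check that it is a $P'(\R)^+$-orbit, verify Definition~\ref{DefnMixedShimuraDatum}(i)--(v), and read off the universal property from the factorization of group homomorphisms. The paper itself gives no argument: it simply cites Pink's thesis (\cite[2.9]{PinkThesis}) for the existence and universal property of quotient mixed Shimura data in full generality, and then remarks that when $(P,\cX^+)$ is of Kuga type, the quotient is again of Kuga type. So what you have written is not a different route but a reconstruction of Pink's construction that the paper treats as known.

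Two small points of precision worth tightening if you were to actually write this up. First, there is no ``finite-index check'' needed: for a surjective homomorphism of algebraic $\Q$-groups the induced map on real points is a submersion, hence open, so $\phi(P(\R)^+)$ is an open connected subgroup of $P'(\R)$ and therefore equals $P'(\R)^+$ exactly. Second, for axiom (iv), the right thing to observe is that $P'/P'^{\der}$ is a quotient of $P/P^{\der}$, and the class of $\Q$-tori that are almost direct products of a $\Q$-split torus with a torus of compact type is stable under quotients (this is a condition on the simple constituents of the rational Galois representation on the character group, so it passes to subquotients). The rest of your axiom-checking — unipotent radical $V/(V\cap P_0)$ remains a vector group, $G'^{\mathrm{ad}}$ is a product of a subset of the $\Q$-simple factors of $G^{\mathrm{ad}}$ so inherits the Cartan involution and noncompactness, and $\mathrm{MT}(\phi\circ x)=\phi(\mathrm{MT}(x))$ — is sound, as is the holomorphy of $\tilde\phi$ via the compatibility of Hodge filtrations on $P'$-representations pulled back to $P$.
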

In fact Pink proved that such a quotient exists for an arbitrary connected mixed Shimura datum (not necessarily of Kuga type), and then it is not hard to see that the resulting $(P,\cX^+)/P_0$ is of Kuga type if $(P,\cX^+)$ is.

\begin{notation}\label{NotationMSV}
We use the following notation. Let $M = \Gamma \backslash \cX^+$ be a connected mixed Shimura variety of Kuga type associated with $(P,\cX^+)$, and let $\bu \colon \cX^+ \rightarrow M$ be the uniformization.

We use $V$ to denote $\cR_u(P)$, and $(G,\cX_G^+)$ to denote $(P,\cX^+)/V$. The quotient Shimura morphism is denoted by $\tilde{\pi} \colon (P,\cX^+) \rightarrow (G,\cX_G^+)$.

We use $M_G$ to denote the connected pure Shimura variety $\tilde{\pi}(\Gamma) \backslash \cX_G^+$, and use $\bu_G \colon \cX_G^+ \rightarrow M_G$ to denote the uniformization. Then $\tilde{\pi}$ induces a Shimura morphism $\pi \colon M \rightarrow M_G$. We thus have the following commutative diagram:
\begin{equation}\label{DiagramKugaTypeToPure}
\xymatrix{
(P,\cX^+) \ar[r]^-{\tilde{\pi}} \ar[d]_{\bu} & (G,\cX_G^+) \ar[d]^{\bu_G} \\
M \ar[r]^-{\pi} & M_G.
}
\end{equation}
\end{notation}

We gave an example of a connected mixed Shimura varieties of Kuga type, namely the universal abelian variety; see $\mathsection$\ref{SubsectionUnivAbVar}. In particular \eqref{DiagramUnivAbVarAndModuliSpace} is a particular case of \eqref{DiagramKugaTypeToPure}. But in fact, all connected mixed Shimura varieties of Kuga type arise from this case. More precisely, Pink proved the following result, which he called \textit{reduction lemma}.\footnote{We only need the result for Kuga type, so the statement is simpler.}
\begin{thm}[{\cite[2.26]{PinkThesis}}]\label{ThmReductionLemma}
Let $(P,\cX^+)$ be a connected mixed Shimura datum of Kuga type such that $\dim V = 2g > 0$, where $V = \cR_u(P)$. Then there exist a connected pure Shimura datum $(G_0,\cD^+)$, a diagonal matrix $D = \mathrm{diag}(d_1,\ldots,d_g)$ with $d_1|\cdots |d_g$ positive integers, and a Shimura morphism
\[
\tilde{\lambda} \colon (P,\cX^+) \hookrightarrow (G_0,\cD^+) \times (P_{2g,D,\mathrm{a}}, \cX_{2g,\mathrm{a}}^+).
\]
\end{thm}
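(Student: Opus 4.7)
The plan is to identify $V$, as a $G$-representation equipped with its weight-$(-1)$ Hodge structure, with the standard symplectic model $V_{2g}$ carrying the form \eqref{EqAltForm}, and then to combine the resulting Shimura morphism into $(P_{2g,D,\mathrm{a}}, \cX_{2g,\mathrm{a}}^+)$ with the quotient $\tilde{\pi}\colon (P,\cX^+) \to (G, \cX_G^+)$ to produce the embedding, taking $(G_0, \cD^+) := (G, \cX_G^+)$.

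First I would fix a Levi decomposition $P = V \rtimes G$. By condition (i) of Definition~\ref{DefnMixedShimuraDatum}, for any $x \in \cX^+$ the vector space $V$ carries a pure rational Hodge structure of weight $-1$ and type $\{(-1,0),(0,-1)\}$; varying $x$ produces a $G$-equivariant variation of Hodge structures on $\cX_G^+$. Using condition (iii) (Cartan involution on $G^{\mathrm{ad}}$ with no compact $\Q$-factor) together with the theory of polarizable VHS over Shimura domains, I would produce a $G$-invariant non-degenerate alternating form $\psi \colon V \times V \to \Q$ (with similitude character $\nu \colon G \to \G_m$) polarizing this Hodge structure at every $x \in \cX^+$. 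Next I would choose an arithmetic lattice $V_\Z \subset V(\Q)$; by the standard normal form for $\Z$-valued alternating forms, $\psi|_{V_\Z}$ has elementary divisors $D = \mathrm{diag}(d_1,\ldots,d_g)$ with $d_1|\cdots|d_g$, and a symplectic basis of $V_\Z$ yields an isomorphism $\iota \colon V \xrightarrow{\sim} V_{2g}$ identifying $\psi$ with \eqref{EqAltForm}. The $G$-action on $V$, transported through $\iota$, becomes a homomorphism $\rho \colon G \to \GSp_{2g,D}$.

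Since $\rho$ respects the Hodge structures by construction, it underlies a Shimura morphism $(G, \cX_G^+) \to (\GSp_{2g,D}, \mathfrak{H}_g^+)$. Assembling $\iota$ on the unipotent radical with $\rho$ on the reductive quotient gives a group homomorphism $P \to P_{2g,D,\mathrm{a}}$ by $(v,g) \mapsto (\iota(v), \rho(g))$, the semidirect-product identity $\iota(g \cdot v) = \rho(g) \cdot \iota(v)$ being exactly the $G$-equivariance of $\iota$. This lifts to a Shimura morphism $\tilde{\mu} \colon (P, \cX^+) \to (P_{2g,D,\mathrm{a}}, \cX_{2g,\mathrm{a}}^+)$. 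Setting $(G_0, \cD^+) := (G, \cX_G^+)$, the desired morphism is $\tilde{\lambda} := (\tilde{\pi}, \tilde{\mu})$, whose underlying group map $(v, g) \mapsto (g,\, (\iota(v), \rho(g)))$ is injective: if $(v,g)$ is in the kernel then $g = 1$ from the first factor and $\iota(v) = 0$ from the second, forcing $v = 0$.

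The main obstacle I anticipate is the polarization step in the second paragraph: producing a genuinely $G$-invariant polarization of the weight-$(-1)$ $G$-module $V$, not merely one invariant under a maximal compact subgroup of $G(\R)$. The key is that conditions (iii) and (iv) of Definition~\ref{DefnMixedShimuraDatum} for $(P,\cX^+)$ descend to the corresponding conditions for the pure datum $(G, \cX_G^+)$, placing us in the setting of polarizable representations of pure Shimura data; the full $G$-invariance should then follow from the standard argument combining positivity of the Weil operator at a base point with the reductivity and the no-compact-factor hypothesis on $G^{\mathrm{ad}}$.
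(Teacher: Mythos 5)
The paper does not prove this theorem; it is quoted verbatim from Pink's thesis \cite[2.26]{PinkThesis}, where it is called the ``reduction lemma''. So there is no in-text proof to compare against. That said, your construction is the expected one: choose a Levi decomposition, produce a rational $G$-semi-invariant polarization form $\psi$ on $V$, read off $D$ from the elementary divisors of $\psi$ on a lattice, obtain $\rho\colon G\to\GSp_{2g,D}$ and hence $\tilde\mu\colon P\to P_{2g,D,\mathrm{a}}$ by assembling with $\iota$, and pair with $\tilde\pi$. The injectivity check is correct, and taking $(G_0,\cD^+)=(G,\cX^+_G)$ is the right choice, since the conditions of Definition~\ref{DefnMixedShimuraDatum} do descend to the pure quotient so that $(G,\cX^+_G)$ is indeed a connected pure Shimura datum.

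The one genuine gap is, as you flag yourself, the polarization step, and the route you sketch is not the most effective way to close it. Positivity of the Weil operator at a base point together with the no-compact-factor hypothesis gives you a form invariant under a maximal compact of $G^{\mathrm{der}}(\R)$, and passing from that to $G$-semi-invariance over $\Q$ is not a one-liner. The cleaner argument uses condition (v) of Definition~\ref{DefnMixedShimuraDatum}: for a Hodge-generic $x\in\cX^+$ one has $\mathrm{MT}(x)=P$. Since $V$ acts trivially on itself by conjugation, the Mumford--Tate group of the pure weight-$(-1)$ Hodge structure $(V,x)$ is the image of $G$ in $\GL(V)$, and any polarization $\psi\colon V\otimes V\to\Q(1)$ of that Hodge structure is a Hodge tensor, hence automatically fixed by $\mathrm{MT}$ up to the similitude character. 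That is exactly the $G$-semi-invariance you need, and what remains (polarizability of the single Hodge structure $(V,x)$) is where condition (iii) enters, via the Cartan involution induced by $x(\sqrt{-1})$. You should also record why $\tilde\mu\circ x$ lands in the distinguished connected component $\cX^+_{2g,\mathrm{a}}$ rather than merely in $\hom(\S,P_{2g,D,\mathrm{a},\R})$; this is where the sign of $\psi$ and a suitable choice of symplectic basis are pinned down, but it is routine once $\psi$ is in hand and $\cX^+$ is connected.
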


Taking the quotient of both sides by their unipotent radicals, we obtain a Shimura morphism $\tilde{\lambda}_G \colon (G,\cX^+_G) \rightarrow (G_0,\cD^+) \times (\GSp_{2g,D},\mathfrak{H}_g^+)$; see \cite[Proposition~2.9]{GaoTowards-the-And} for a proof.

Theorem~\ref{ThmReductionLemma} has the following immediate corollary.
\begin{cor}
Let $M = \Gamma \backslash \cX^+$ be a connected mixed Shimura variety of Kuga type associated with $(P,\cX^+)$. Assume $\dim V = 2g$. Then up to replacing $\Gamma$ by a subgroup of finite index, we have that $\pi \colon M \rightarrow M_G$ is an abelian scheme of relative dimension $g$.
\end{cor}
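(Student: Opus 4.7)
The plan is to apply the Reduction Lemma (Theorem~\ref{ThmReductionLemma}) and exhibit $\pi \colon M \to M_G$ as a pullback, up to finite covers, of the universal abelian scheme $\pi_{\mathrm{univ}} \colon \mathfrak{A}_{g,D}(N) \to \A_{g,D}(N)$.

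First I would apply Theorem~\ref{ThmReductionLemma} to obtain a Shimura embedding
\[
\tilde{\lambda} \colon (P,\cX^+) \hookrightarrow (G_0,\cD^+) \times (P_{2g,D,\mathrm{a}},\cX_{2g,\mathrm{a}}^+),
\]
together with the induced embedding $\tilde{\lambda}_G \colon (G,\cX_G^+) \hookrightarrow (G_0,\cD^+) \times (\GSp_{2g,D},\mathfrak{H}_g^+)$ on reductive quotients. The hypothesis $\dim V = 2g = \dim V_{2g}$, combined with the injectivity of $\tilde{\lambda}$, forces the map $V \to V_{2g}$ on unipotent radicals to be an isomorphism of $\Q$-vector groups compatible with the mixed Hodge structure attached to each point of $\cX^+$. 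Consequently, for every $y \in \cX_G^+$, the fiber of $\tilde{\pi} \colon \cX^+ \to \cX_G^+$ over $y$ is identified, as a complex manifold, with the fiber of $\cX_{2g,\mathrm{a}}^+ \to \mathfrak{H}_g^+$ over the image of $y$.

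Next, I would replace $\Gamma$ by a sufficiently small finite-index subgroup so that $\tilde{\lambda}(\Gamma) \subseteq \Gamma_{G_0} \times \Gamma_{P_{2g,D,\mathrm{a}}}(N)$ for some congruence subgroup $\Gamma_{G_0}$ of $G_0(\Q)$ and some $N \geq 3$. The Shimura embeddings then descend to closed embeddings of algebraic varieties $M \hookrightarrow M_0 \times \mathfrak{A}_{g,D}(N)$ and $M_G \hookrightarrow M_0 \times \A_{g,D}(N)$ that intertwine $\pi$ with $\mathrm{id}_{M_0} \times \pi_{\mathrm{univ}}$. The fiber identification from the previous paragraph, together with the compatibility of the lattices, shows that the commutative square
\[
\xymatrix{
M \ar[r] \ar[d]_{\pi} & \mathfrak{A}_{g,D}(N) \ar[d]^{\pi_{\mathrm{univ}}} \\
M_G \ar[r] & \A_{g,D}(N)
}
\]
is Cartesian in the category of algebraic varieties.

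Since the pullback of an abelian scheme is again an abelian scheme, and $\pi_{\mathrm{univ}}$ is an abelian scheme of relative dimension $g$ by construction (cf. $\mathsection$\ref{SubsectionUnivAbVar}), we conclude that $\pi \colon M \to M_G$ is an abelian scheme of relative dimension $g$. The only delicate point in carrying out this plan is the choice of congruence subgroups --- one must verify that $\Gamma$ can be shrunk so that the Shimura embeddings descend to closed immersions of the associated quotient varieties --- but this is routine bookkeeping given Pink's construction of Shimura varieties cited above and presents no conceptual obstacle.
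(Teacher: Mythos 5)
Your proof is correct and follows essentially the same route as the paper: apply the Reduction Lemma to embed $(P,\cX^+)$ into $(G_0,\cD^+)\times(P_{2g,D,\mathrm{a}},\cX_{2g,\mathrm{a}}^+)$, observe that the resulting square is Cartesian by comparing fiber (unipotent) dimensions, and then descend to a Cartesian diagram of the quotient varieties after shrinking $\Gamma$ so that $\tilde{\lambda}$ induces a closed immersion (this is exactly \cite[3.8(b)]{PinkThesis}). The only omission is the trivial case $g=0$, where the Reduction Lemma does not apply but $\pi$ is an isomorphism so the conclusion is immediate.
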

\begin{proof}
If $V = 0$, then the conclusion certainly holds. Assume $\dim V = 2g > 0$. Then apply Theorem~\ref{ThmReductionLemma} to $(P,\cX^+)$. We have the following cartesian diagram
\[
\xymatrix{
(P,\cX^+) \ar[r]^-{\tilde{\lambda}} \ar[d]_{\tilde{\pi}} \pullbackcorner & (G_0,\cD^+) \times (P_{2g,D,\mathrm{a}}, \cX_{2g,\mathrm{a}}^+) \ar[d] \\
(G,\cX_G^+) \ar[r]^-{\tilde{\lambda}_G} & (G_0,\cD^+) \times (\GSp_{2g,D},\mathfrak{H}_g^+).
}
\]
It is cartesian by comparing the dimension of the vertical fibers.

Now up to replacing $\Gamma$ by a subgroup of finite index, we may assume that $\tilde{\lambda}$ induces a closed immersion $M \rightarrow M_{G_0} \times \mathfrak{A}_g(N)$ by \cite[3.8(b)]{PinkThesis}. Thus we obtain a cartesian diagram
\[
\xymatrix{
M \ar[r] \ar[d]_{\pi} \pullbackcorner & M_{G_0} \times \mathfrak{A}_{g,D}(N) \ar[d] \\
M_G \ar[r] & M_{G_0} \times \A_{g,D}(N)
}
\]
where the horizontal morphisms are closed immersions. Hence $\pi \colon M \rightarrow M_G$ is an abelian scheme of relative dimension $g$.
\end{proof}

\subsection{Realization of $\cX^+$}\label{SubsectionRealizationOfX}
Let $(P,\cX^+)$ be a connected mixed Shimura datum of Kuga type. We recall the realization of $\cX^+$ as in \cite[$\mathsection$4]{GaoTowards-the-And}.

We start with the dual $\cX^\vee$ of $\cX^+$; see \cite[1.7(a)]{PinkThesis} or \cite[Chapter~VI, Proposition~1.3]{MilneCanonical-model}.

Let $W$ be a faithful representation of $P$ and take any point $x_0 \in \cX^+$. The weight filtration on $W$ is constant, so the Hodge filtration $x \mapsto \mathrm{Fil}_x^{\cdot}(W_{\C})$ gives an injective map $\cX^+ \hookrightarrow \mathrm{Grass}(W)(\C)$ to a certain flag variety. In fact, the injective map factors through
\[
\cX^+ = P(\R)^+/\mathrm{Stab}_{P(\R)^+}(x_0) \hookrightarrow P(\C)/\exp(\mathrm{Fil}_{x_0}^0 \mathrm{Lie}P_{\C}) \hookrightarrow \mathrm{Grass}(W)(\C),
\]
where the first injection is an open immersion; see \cite[1.7(a)]{PinkThesis} or \cite[Chapter~VI, (1.2.1)]{MilneCanonical-model}. We define the dual $\cX^\vee$ of $\cX^+$ to be
\[
\cX^\vee = P(\C)/\exp(\mathrm{Fil}_{x_0}^0 \mathrm{Lie}P_{\C}).
\]
Then $\cX^\vee$ is clearly a connected smooth complex algebraic variety.

\begin{prop}[{\cite[Proposition~4.1 and Remark~4.4]{GaoTowards-the-And}}]\label{PropositionRealizationOfX}
Under the open (in the usual topology) immersion $\cX^+ \hookrightarrow \cX^\vee$, the space $\cX^+$ is realized as a semi-algebraic subset which is also a complex manifold. In particular, the complex structure of any $\cX^+_{x_G}$ ($x_G \in \cX_G^+$) is the same as the one obtained from $\cX^+_{x_G} \cong V(\C) / \mathrm{Fil}_{x_G}^0 V(\C)$.
\end{prop}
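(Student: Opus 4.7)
The openness of $\cX^+ \hookrightarrow \cX^\vee$ has already been noted in the paragraph preceding the proposition, so $\cX^+$ is automatically a complex manifold; the content of the statement is (a) semi-algebraicity in $\cX^\vee$ and (b) compatibility of the induced complex structure on each fiber $\cX^+_{x_G}$ with the one coming from $V(\C)/\mathrm{Fil}_{x_G}^0 V(\C)$. I would fix a Levi decomposition $P = V \rtimes G$ and attack the statement by separating the pure base from the unipotent fibers. The projection $P \to G$ and the Hodge-compatible direct sum $\lie P = \lie V \oplus \lie G$ induce an algebraic morphism $\tilde\pi^\vee \colon \cX^\vee \to \cX_G^\vee$ with $\cX_G^\vee = G(\C)/\exp(\mathrm{Fil}_{x_{G,0}}^0\lie G_\C)$, whose restriction to $\cX^+$ is the Shimura projection $\tilde\pi \colon \cX^+ \to \cX_G^+$, and whose fiber over the image of $x_G$ is canonically the complex vector space $V(\C)/\mathrm{Fil}_{x_G}^0 V(\C)$; this uses that $V$ is a Hodge substructure, that $V$ is commutative unipotent so that $\exp$ is an algebraic bijection, and that $\exp(\mathrm{Fil}_{x_0}^0 \lie P_\C) \cap V(\C) = \mathrm{Fil}_{x_0}^0 V(\C)$.

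\textbf{Base and fibers.} For the pure base I would invoke the classical Borel embedding: $\cX_G^+ \hookrightarrow \cX_G^\vee$ is open and semi-algebraic, because $\cX_G^+ = G(\R)^+ \cdot x_{G,0}$ is an orbit of a real algebraic group on a complex algebraic variety. For the fibers, the Hodge structure on $V$ induced by any $x \in \cX^+$ is of type $\{(-1,0),(0,-1)\}$, hence $V(\C) = V_{x_G}^{-1,0} \oplus \mathrm{Fil}_{x_G}^0 V(\C)$, and consequently the composite
\[
V(\R) \hookrightarrow V(\C) \twoheadrightarrow V(\C)/\mathrm{Fil}_{x_G}^0 V(\C)
\]
is an $\R$-linear isomorphism onto a real form. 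Since $\cX^+_{x_G}$ is a $V(\R)$-torsor under the action $(v,h)\cdot(v',x) = (v+hv',hx)$, this identification shows that $\cX^+_{x_G} \hookrightarrow \cX^\vee_{x_G}$ is precisely the inclusion of a real form in a complex vector space; this gives the ``in particular'' claim, and the fact that $V(\R)$ is real-linear (hence semi-algebraic) inside $V(\C)/\mathrm{Fil}_{x_G}^0 V(\C)$ handles fiberwise semi-algebraicity. Gluing over $\cX_G^+$, one sees that $\cX^+$ is carved out of $\cX^\vee$ as the preimage of the semi-algebraic open set $\cX_G^+$ under $\tilde\pi^\vee$ intersected with a real-algebraic sub-bundle, which yields (a).

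\textbf{Main obstacle.} The technical heart is to verify that the fibration $\tilde\pi^\vee$ actually has the asserted fiber, because $\cX^\vee$ is defined using a \emph{single} reference filtration $\mathrm{Fil}_{x_0}^0 \lie P_\C$ whereas the target identification uses the \emph{varying} filtration $\mathrm{Fil}_{x_G}^0 V(\C)$. To reconcile the two, I would use the semi-direct product decomposition to write
\[
\exp(\mathrm{Fil}_{x_0}^0 \lie P_\C) = \mathrm{Fil}_{x_{G,0}}^0 V(\C) \cdot \exp(\mathrm{Fil}_{x_{G,0}}^0 \lie G_\C)
\]
inside $P(\C)$, and then transport the first factor by a holomorphic family of elements of $G(\C)$ sending $\mathrm{Fil}_{x_{G,0}}^0$ to $\mathrm{Fil}_{x_G}^0$. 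Establishing that this transport is algebraically consistent so that $\tilde\pi^\vee$ becomes a Zariski-locally-trivial affine bundle with the claimed fibers is the delicate step; once this holds, both the semi-algebraicity of $\cX^+$ and the fiberwise matching of complex structures follow cleanly from the Borel-embedding and real-form arguments described above.
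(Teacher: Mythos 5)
The paper supplies no proof of this proposition; it cites \cite[Proposition~4.1 and Remark~4.4]{GaoTowards-the-And} for it, so I can only compare approaches. Your plan (split off the pure base, identify each fiber of the algebraic projection $\tilde\pi^\vee \colon \cX^\vee \rightarrow \cX_G^\vee$ with $V(\C)/\mathrm{Fil}_{x_G}^0 V(\C)$, take the $V(\R)$-real form fiberwise, glue) is geometrically sound and correctly isolates the content of the ``in particular'' clause, but it routes the semi-algebraicity claim through a global trivialization of the bundle $\tilde\pi^\vee$, which is exactly the ``main obstacle'' you flag and do not actually close. That obstacle is an artifact of the detour: the orbit argument you invoke for $\cX_G^+$ applies verbatim to $\cX^+$ itself. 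Since $\cX^+$ is by definition a $P(\R)^+$-homogeneous space, $\cX^+ = P(\R)^+ \cdot x_0$; the action $P(\C)\times\cX^\vee\rightarrow\cX^\vee$ is algebraic and $P(\R)^+$ is a semi-algebraic subset of $P(\C)$, so $\cX^+$ is the image of a semi-algebraic set under an algebraic map, hence semi-algebraic by Tarski--Seidenberg. This gives part (a) directly, without any fibration argument, and is almost certainly what the cited Proposition~4.1 does (note that the paper treats the semi-algebraicity of the Levi-section identification $\rho \colon V(\R)\times\cX_G^+ \cong \cX^+$ as a separate, later statement, \cite[Proposition~4.3]{GaoTowards-the-And}, rather than an ingredient here). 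Your fiberwise complex-structure computation for the ``in particular'' clause is correct: since the Hodge structure on $V$ at $x_G$ is of type $\{(-1,0),(0,-1)\}$, the composite $V(\R)\hookrightarrow V(\C)\twoheadrightarrow V(\C)/\mathrm{Fil}_{x_G}^0V(\C)$ is a real-linear isomorphism and the induced complex structure is the Hodge one. If you do wish to complete the bundle route, the natural way to discharge the ``main obstacle'' is to take $\tilde{i}(x_G)$ as base point in the fiber (where $\tilde{i} \colon (G,\cX_G^+)\rightarrow(P,\cX^+)$ is the Levi section of \cite{WildeshausThe-canonical-c}), note that $\tilde{i}\colon\cX_G^\vee\rightarrow\cX^\vee$ is algebraic because $\tilde{i}\colon G\rightarrow P$ is an algebraic homomorphism, and then observe that $\cX^+ = V(\R)\cdot\tilde{i}(\cX_G^+)$ is the image of $V(\R)\times\cX_G^+$ under an algebraic map restricted to a semi-algebraic domain; but this is noticeably more work than the direct orbit argument.
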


\begin{eg} Let us look at the example $(P_{2g,D,\mathrm{a}},\cX_{2g,\mathrm{a}}^+)$. In this case, we can take $W$ be to a $\Q$-vector space of dimension $2g+1$ and identify $P_{2g,D,\mathrm{a}}$ with the following subgroup of $\GL_{2g+1}$:
\[
\begin{pmatrix}
\GSp_{2g,D} & V_{2g} \\
0 & 1
\end{pmatrix}.
\]
It is then not hard to prove that the complex structure of $\cX_{2g,\mathrm{a}}^+$ given by Proposition~\ref{PropositionRealizationOfX} coincides with the one given by \eqref{EqComplexStrOfX2g}.\footnote{The realization of $\mathfrak{H}_g^+$ by Proposition~\ref{PropositionRealizationOfX} is the Harish-Chandra realization, whereas the one given by \eqref{EqComplexStrOfX2g} is the Siegel upper half-space realization. It is known that for $\mathfrak{H}_g^+$, the complex structures given by these two realizations coincide. Moreover, the various realizations of $\cX_G^+$ give the same semi-algebraic structure by \cite[Lemma~2.1]{UllmoQuelques-applic}.}
\end{eg}

The identification \eqref{EqComplexStrOfX2g} can be generalized to arbitrary $(P,\cX^+)$ of Kuga type in the following way. Use Notation~\ref{NotationMSV}. By \cite[pp. 6]{WildeshausThe-canonical-c}, there exists a Shimura morphism $\tilde{i} \colon (G,\cX_G^+) \rightarrow (P,\cX^+)$ such that $\tilde{\pi} \circ \tilde{i} = \mathrm{id}$. Then $\tilde{i}$ defines a Levi decomposition of $P = V\rtimes G$. Recall $\cX^+ \subseteq \hom(\S,P_{\R})$. Define the bijective map
\[
\rho \colon V(\R) \times \cX_G^+ \xrightarrow{\sim} \cX^+, \quad (v,x) \mapsto \mathrm{int}(v) \circ \tilde{i}(x).
\]
Under this identification, the action of $P(\R)^+ = V(\R) \rtimes G(\R)^+$ on $\cX^+$ is given by the formula
\[
(v',h)\cdot (v,x) = (v' + hv , hx), \quad \forall (v',h) \in P(\R)^+\text{ and }(v,x) \in \cX^+.
\]
It is proven that for the realization of $\cX^+$ as an open (in the usual topology) semi-algebraic subset of $\cX^\vee$, the identification $\rho$ above is semi-algebraic; see \cite[Proposition~4.3]{GaoTowards-the-And}.

\section{Statement of the Ax-Schanuel theorem}\label{SectionStatementOfAS}
\subsection{Review on the bi-algebraic system}\label{SubsectionReviewOnBiAlgSystem}
Let $M = \Gamma \backslash \cX^+$ be a connected mixed Shimura variety of Kuga type associated with $(P,\cX^+)$. Let $\bu \colon \cX^+ \rightarrow M$ be the uniformization.

Recall the realization of $\cX^+$ in $\mathsection$\ref{SubsectionRealizationOfX}. By Proposition~\ref{PropositionRealizationOfX} $\cX^+$ can be realized as an open (in the usual topology) semi-algebraic subset of a complex algebraic variety $\cX^\vee$.

\begin{defn}
\begin{enumerate}
\item[(i)] A subset $\tilde{Y}$ of $\cX^+$ is said to be \textbf{irreducible algebraic} if it is a complex analytic irreducible component of $\cX^+ \cap W$, where $W$ is an algebraic subvariety of $\cX^\vee$.
\item[(ii)] A subset $\tilde{Y}$ of $\cX^+$ is said to be \textbf{irreducible bi-algebraic} if it is an irreducible algebraic subset of $\cX^+$ and $\bu(\tilde{Y})$ is an algebraic subvariety of $M$.
\item[(iii)] A closed irreducible subvariety $Y$ of $M$ is said to be \textbf{bi-algebraic} if one (and hence any) complex analytic irreducible subvariety of $\bu^{-1}(Y)$ is irreducible algebraic in $\cX^+$. A closed subvariety $Y$ of $M$ is said to be bi-algebraic if all of its irreducible components are bi-algebraic.
\end{enumerate}
\end{defn}
The following result is not hard to prove.
\begin{lemma}
If $F_1$ and $F_2$ are bi-algebraic subvarieties of $M$, then every irreducible component of $F_1 \cap F_2$ is also bi-algebraic.
\end{lemma}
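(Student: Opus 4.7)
The plan is to work on the uniformizing cover $\cX^+$. First, replace $F_1$ and $F_2$ by the unique irreducible components containing $Y$; these are still bi-algebraic by definition, so we may assume $F_1$ and $F_2$ are irreducible. Fix a complex analytic irreducible component $\tilde Y$ of $\bu^{-1}(Y)$ and, for $i=1,2$, a complex analytic irreducible component $\tilde F_i$ of $\bu^{-1}(F_i)$ containing $\tilde Y$. A short argument using $\bu^{-1}(F_1)\cap\bu^{-1}(F_2)=\bu^{-1}(F_1\cap F_2)$ and the fact that $Y$ is an irreducible component of $F_1\cap F_2$ shows that $\tilde Y$ is in fact an irreducible component of $\tilde F_1\cap\tilde F_2$: if $\tilde Y'\supseteq\tilde Y$ is irreducible analytic in $\tilde F_1\cap\tilde F_2$, then $\bu(\tilde Y')$ is irreducible analytic in $F_1\cap F_2$ containing $Y$, hence equal to $Y$, forcing $\tilde Y'\subseteq\bu^{-1}(Y)$ and then $\tilde Y'=\tilde Y$. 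Since $Y$ is algebraic in $M$ (being an irreducible component of an intersection of closed algebraic subvarieties), bi-algebraicity of $Y$ reduces to showing that $\tilde Y$ is irreducible algebraic in $\cX^+$.

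By the bi-algebraicity of $F_i$, each $\tilde F_i$ is a complex analytic irreducible component of $\cX^+\cap W_i$, where we take $W_i\subseteq\cX^\vee$ to be the (irreducible) algebraic Zariski closure of $\tilde F_i$; in particular $\dim\tilde F_i=\dim W_i$. For $p\in\tilde Y$ and a sufficiently small open neighborhood $U$ of $p$ in $\cX^+$ avoiding the other irreducible components of $\cX^+\cap W_1$ and $\cX^+\cap W_2$, one has $U\cap\tilde F_i=U\cap W_i$, and hence the local set-theoretic identity
\[
U\cap(\tilde F_1\cap\tilde F_2)=U\cap(W_1\cap W_2).
\]

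To conclude, let $Z$ be the Zariski closure of $\tilde Y$ in $\cX^\vee$; $Z$ is irreducible algebraic and contained in some irreducible algebraic component $V$ of $W_1\cap W_2$. Choosing $p\in\tilde Y$ smooth, the above local identity exhibits a local analytic branch of $\tilde Y$ at $p$ as a local analytic branch of $V$ at $p$, and therefore its complex dimension equals $\dim V$ by the standard equality between local analytic and global algebraic dimensions on an irreducible algebraic variety. Hence $\dim\tilde Y=\dim V\ge\dim Z\ge\dim\tilde Y$, so $\dim Z=\dim\tilde Y$, and then the routine argument (any irreducible analytic subset of $\cX^+\cap Z$ containing $\tilde Y$ has dimension at most $\dim Z=\dim\tilde Y$, hence equals $\tilde Y$) shows $\tilde Y$ is a complex analytic irreducible component of $\cX^+\cap Z$, as required.

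The main subtlety is that an irreducible complex analytic subset of $\cX^+$ can have strictly smaller complex dimension than its Zariski closure in $\cX^\vee$ (the graph of a transcendental analytic function being a standard example), so the conclusion is not formal from intersecting algebraic subvarieties of $\cX^\vee$. The bi-algebraicity of $F_1$ and $F_2$ is used precisely to preclude this dimension jump for $\tilde Y$: it supplies the local identification of $\tilde F_1\cap\tilde F_2$ with the algebraic set $W_1\cap W_2$, so that $\tilde Y$ locally resembles an analytic branch of an algebraic variety and therefore inherits the correct algebraic dimension.
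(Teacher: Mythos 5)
Your overall strategy is reasonable, and the two reductions in the first paragraph (passing to irreducible $F_1,F_2$, and showing that $\tilde Y$ is a complex analytic irreducible component of $\tilde F_1\cap\tilde F_2$) are essentially correct, though the intermediate claim that $\bu(\tilde Y')$ is ``irreducible analytic in $F_1\cap F_2$'' is imprecise (the image of an analytic set under the covering map need not be a closed analytic subset of $M$); the clean way is to note that $\tilde Y'$, being irreducible and contained in $\bu^{-1}(F_1\cap F_2)=\bigcup_j\bu^{-1}(Y_j)$, lies in a single $\bu^{-1}(Y_j)$, and since $\bu$ maps every component of $\bu^{-1}(Y)$ onto $Y$, one gets $Y\subseteq Y_j$, hence $Y=Y_j$.

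The genuine gap is in the second paragraph. You assert the existence of a point $p\in\tilde Y$ and a neighborhood $U$ ``avoiding the other irreducible components of $\cX^+\cap W_1$ and $\cX^+\cap W_2$,'' so that $U\cap\tilde F_i=U\cap W_i$. This requires $p$ to lie on no component of $\cX^+\cap W_i$ other than $\tilde F_i$. But nothing rules out that $\tilde Y$ is entirely contained in another component $\tilde F_1'$ of $\cX^+\cap W_1$; in fact this happens precisely when $\tilde Y\subseteq W_1^{\mathrm{sing}}$ (a point on two distinct analytic branches of $W_1$ is a singular point of $W_1$, and two distinct components of $\cX^+\cap W_1$ both have dimension $\dim W_1$, so their intersection lies in $W_1^{\mathrm{sing}}$). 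In that case no such $p$ exists, the local identity $U\cap(\tilde F_1\cap\tilde F_2)=U\cap(W_1\cap W_2)$ fails, and the key conclusion that the germ of $\tilde Y$ at $p$ is a branch of $W_1\cap W_2$ -- hence that $\dim\tilde Y=\dim V$ -- is not established. Note also that the extra components of $\cX^+\cap W_i$ need not be components of $\bu^{-1}(F_i)$ at all, so the bi-algebraicity of $F_i$ gives you no control over them. This is exactly the place where the ``dimension jump'' subtlety you flag at the end can resurface, so it must be handled. The paper's intended route is presumably not this local-analytic one but rather the identification of bi-algebraic subvarieties with weakly special subvarieties (cited later as \cite[Theorem~8.1]{GaoTowards-the-And}), for which stability of the class under taking components of intersections is a group-theoretic statement; alternatively your argument would need an extra reduction (e.g.\ an induction replacing $W_i$ by a component of $W_i^{\mathrm{sing}}$ when $\tilde Y\subseteq W_i^{\mathrm{sing}}$) to close this gap.
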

In view of this, we can introduce the following notation.
\begin{notation}\label{NotationBiAlg}
\begin{enumerate}
\item[(i)] Let $\tilde{Z}$ be any complex analytic irreducible subset of $\cX^+$. Use $\tilde{Z}^{\Zar}$ to denote the smallest irreducible algebraic subset of $\cX^+$ which contains $\tilde{Z}$, and use $\tilde{Z}^{\biZar}$ to denote the smallest irreducible bi-algebraic subset of $\cX^+$ which contains $\tilde{Z}$.
\item[(ii)] Let $Z$ be any subset of $M$ (not necessarily a subvariety). Use $Z^{\biZar}$ to denote the smallest bi-algebraic subvariety of $M$ which contains $Z$ (hence $Z^{\biZar}$ contains $Z^{\Zar}$).
\end{enumerate}
\end{notation}

\subsection{Statement of the Ax-Schanuel theorem}
Let $M = \Gamma \backslash \cX^+$ be a connected mixed Shimura variety of Kuga type associated with $(P,\cX^+)$. Let $\bu \colon \cX^+ \rightarrow M$ be the uniformization. The typical case is when $M= \mathfrak{A}_{g,D}$ is the universal abelian variety.

\begin{thm}[Ax-Schanuel for mixed Shimura varieties of Kuga type]\label{ThmASUnivAbVar}
Let $\mathscr{Z}$ be a complex analytic irreducible subvariety of $\mathrm{graph}(\bu) \subseteq \cX^+ \times M$, and denote by $\tilde{Z}$ the image of $\mathscr{Z}$ under the natural projection $\cX^+ \times M \rightarrow \cX^+$. Then
\[
\dim \mathscr{Z}^{\Zar} - \dim \tilde{Z} \ge \dim \tilde{Z}^{\biZar},
\]
where $\mathscr{Z}^{\Zar}$ means the Zariski closure of $\mathscr{Z}$ in $M \times \cX^+$. Moreover the equality holds if and only if $\tilde{Z}$ is a complex analytic irreducible component of $\tilde{Z}^{\Zar} \cap \bu^{-1}(\bu(\tilde{Z})^{\Zar})$. 
\end{thm}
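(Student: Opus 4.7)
The plan is to follow the Pila--Zannier / Mok--Pila--Tsimerman paradigm, adapted to the mixed Kuga setting, proceeding in four phases: initial reduction, o-minimal counting to force a large $\Q$-stabilizer, promotion of this stabilizer to a normal subgroup of $P$, and a bi-algebraic obstruction argument. The ambient o-minimal structure will be $\R_{\mathrm{an,exp}}$, in which both the semi-algebraic realization $\cX^+\hookrightarrow\cX^\vee$ from Proposition~\ref{PropositionRealizationOfX} and a chosen fundamental domain $\cF\subseteq\cX^+$ for $\Gamma$ are definable.

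\textbf{Setup and initial reduction.} Assume toward a contradiction that $\dim\mathscr{Z}^{\Zar}-\dim\tilde Z<\dim\tilde Z^{\biZar}$. Since $\tilde Z^{\biZar}$ corresponds to a connected mixed Shimura subdatum of Kuga type in the framework of $\mathsection$\ref{SubsectionReviewOnBiAlgSystem}, one may replace $(P,\cX^+)$ by that subdatum to arrange $\tilde Z^{\biZar}=\cX^+$. The theorem then reduces to the extremal inequality $\dim\mathscr{Z}^{\Zar}\geq\dim\tilde Z+\dim\cX^+$, i.e.\ that $\mathscr{Z}^{\Zar}$ is as large as possible.

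\textbf{O-minimal counting for a large $\Q$-stabilizer.} Let $W:=\mathscr{Z}^{\Zar}\subseteq\cX^\vee\times M$ and consider the set of $\gamma\in P(\R)^+$ for which the translated piece $\gamma\cdot(\tilde Z^{\Zar}\cap\cF)$ still projects into the $M$-projection of $W$ via $\bu$. This set is definable, and under the smallness hypothesis it would contain polynomially many lattice points $\gamma\in\Gamma$ of bounded height in a Siegel set; the count is governed by the volume estimates for the pure factor $\cX_G^+$ of Mok--Pila--Tsimerman. Pila--Wilkie then yields a positive-dimensional real-algebraic arc, whose $\Q$-Zariski closure is a positive-dimensional $\Q$-algebraic subgroup $H\subseteq P$ stabilizing the $\cX^\vee$-projection of $W$. \textbf{I expect this step to be the main technical obstacle.} The Siegel-type coordinates of \eqref{EqTwoComplexStrOfX2g} split $\tilde Z$ into a ``horizontal'' piece in $\cX_G^+$ and a ``vertical'' piece in $V(\R)$, whose relative growth must be compared. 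When the vertical growth is polynomial in the horizontal, the pure Mok--Pila--Tsimerman volume bounds suffice; in the opposite regime, where the horizontal growth is polynomial in the vertical, I would invoke Tsimerman's \emph{polynomial-for-free in unipotent groups} argument (as flagged in the outline of $\mathsection$\ref{SectionQStabBig}) to extract sufficiently many unipotent elements of $H$. Iterating the counting forces $H$ to be big enough to matter.

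\textbf{Normality, descent, and conclusion.} The principal obstacle new to the mixed case is that $H$ need not be normal in $P$: for a pure Shimura datum a normal subgroup induces a direct-product decomposition of the datum, but for mixed $(P,\cX^+)$ the vector group $V=\cR_u(P)$ couples to the reductive quotient $G$ nontrivially, so bigness of the reductive part of $H$ cannot be established directly. Following the plan of $\mathsection$\ref{SubsectionLastStepTowardsNormality}, I would analyze the $P$-conjugates of $H$ together with the Hodge-theoretic description of its orbits coming from $\mathsection$\ref{SubsectionHodgeTheoreticX2ga} to enlarge $H$ to a normal $\Q$-subgroup $N\trianglelefteq P$ whose orbits still lie in $\tilde Z^{\biZar}$. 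The quotient $(P,\cX^+)/N$ then exists as a connected mixed Shimura datum of Kuga type, and the preimage under $\bu$ of a positive-codimensional bi-algebraic subvariety in the quotient Shimura variety pulls back to a proper bi-algebraic subvariety of $\cX^+$ containing $\tilde Z$, contradicting $\tilde Z^{\biZar}=\cX^+$. For the equality clause, tracing the inequalities shows that equality forces $\mathscr{Z}^{\Zar}$ to have the smallest possible fiber dimension over $\tilde Z^{\Zar}$, which translates precisely into the condition that $\tilde Z$ is a complex analytic irreducible component of $\tilde Z^{\Zar}\cap\bu^{-1}(\bu(\tilde Z)^{\Zar})$, while the converse implication is a routine dimension count.
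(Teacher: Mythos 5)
Your initial reduction contains a genuine error that propagates to the end of the argument. You claim that $\tilde Z^{\biZar}$ ``corresponds to a connected mixed Shimura subdatum of Kuga type,'' and propose to replace $(P,\cX^+)$ by that subdatum so that $\tilde Z^{\biZar}=\cX^+$. But bi-algebraic subvarieties of $M$ are the \emph{weakly special} subvarieties: by $\mathsection$\ref{SubsectionWeaklySpecial}, $\tilde Z^{\biZar}=N(\R)^+\tilde y$ for a subdatum $(Q,\cY^+)$ and a normal subgroup $N$ of $Q^{\der}$ --- an orbit of a normal subgroup, \emph{not} the uniformizing space of a subdatum. After shrinking $(P,\cX^+)$ to the minimal datum with $\tilde Z\subseteq\cX^+$ (which the paper does), one only arranges $\tilde Z^{\biZar}=N(\R)^+\tilde x$ with $N\lhd P$, and this is generically a proper subset of $\cX^+$. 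In the pure case a normal $N\lhd G$ induces a product decomposition $(G,\cX_G^+)=(G_1,\cX_1^+)\times(G_2,\cX_2^+)$ so one could project away the extra factor, but as $\mathsection$\ref{SubsectionLastStepTowardsNormality} (and the paper's outline) emphasize, this fails precisely in the mixed case because $V=\cR_u(P)$ couples nontrivially to $G$. The normalization $\tilde Z^{\biZar}=\cX^+$ is therefore unavailable in exactly the setting the theorem is about, and your final step --- ``pulls back to a proper bi-algebraic subvariety of $\cX^+$ containing $\tilde Z$, contradicting $\tilde Z^{\biZar}=\cX^+$'' --- never gets off the ground.

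The paper's $\mathsection$\ref{SectionEndOfProof} avoids this by proving the inequality directly rather than by contradiction: once $H\lhd P$ with $\dim H>0$, it passes to the quotient datum $(P',\cX^{\prime+})=(P,\cX^+)/H$, observes that maximality of $H$ forces the first alternative of Proposition~\ref{PropBignessOfQStabilizer} for the image $(\mathscr{B}',\mathscr{Z}')$, and then adds the inequality in the quotient to the fiber contribution $\dim H(\R)^+\tilde x$ to recover $\dim\tilde F=\dim N(\R)^+\tilde x$ exactly. Your middle phases are in the right spirit --- the two-regime growth comparison and the ``polynomial for free'' unipotent trick match the proof of Theorem~\ref{ThmBignessOfQStab}, and the Hodge-theoretic analysis via $J_{\tilde x_G}$ is the content of $\mathsection$\ref{SubsectionLastStepTowardsNormality} --- but one further point: the $\Q$-stabilizer must be taken of $\mathscr{B}\subseteq\cX^+\times M$ itself (as in \eqref{EqDefnQStabilizerOfB}), not merely of its $\cX^\vee$-projection, since the Hilbert-scheme/foliation argument of $\mathsection$\ref{SubsectionAlgFamilyAssoWithB} that establishes normality requires the joint object.
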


We also state the weak Ax-Schanuel theorem for mixed Shimura varieties of Kuga type to make the statement more clear. It follows directly from Theorem~\ref{ThmASUnivAbVar}.
\begin{thm}[weak Ax-Schanuel for mixed Shimura varieties of Kuga type]\label{ThmWASUnivAbVar}
Let $\tilde{Z}$ be a complex analytic irreducible subset of $\cX^+$. Then
\[
\dim (\bu(\tilde{Z}))^{\Zar} + \dim \tilde{Z}^{\Zar} \ge \dim \tilde{Z} + \dim \tilde{Z}^{\biZar}.
\]
Moreover the equality holds if and only if $\tilde{Z}$ is a complex analytic irreducible component of $\tilde{Z}^{\Zar} \cap \bu^{-1}(\bu(\tilde{Z})^{\Zar})$.
\end{thm}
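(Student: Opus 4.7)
The plan is to derive Theorem~\ref{ThmWASUnivAbVar} as a direct corollary of Theorem~\ref{ThmASUnivAbVar} by lifting $\tilde{Z}$ into the graph of the uniformization. Given the analytic irreducible $\tilde{Z}\subseteq\cX^+$, set
\[
\mathscr{Z}:=\{(x,\bu(x)) : x\in\tilde{Z}\}\subseteq\mathrm{graph}(\bu)\subseteq\cX^+\times M.
\]
Since $x\mapsto(x,\bu(x))$ is a biholomorphism from $\cX^+$ onto $\mathrm{graph}(\bu)$, $\mathscr{Z}$ is a complex analytic irreducible subvariety of $\mathrm{graph}(\bu)$ with $\dim\mathscr{Z}=\dim\tilde{Z}$ and with projections $\tilde{Z}$ and $\bu(\tilde{Z})$ to the two factors. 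Applying Theorem~\ref{ThmASUnivAbVar} gives
\[
\dim\mathscr{Z}^{\Zar}\ge\dim\tilde{Z}+\dim\tilde{Z}^{\biZar}.
\]

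To turn this into a bound phrased only in $\cX^+$, observe that $\tilde{Z}^{\Zar}\times\bu(\tilde{Z})^{\Zar}$ is an irreducible algebraic subset of $\cX^+\times M$ containing $\mathscr{Z}$, so $\mathscr{Z}^{\Zar}\subseteq\tilde{Z}^{\Zar}\times\bu(\tilde{Z})^{\Zar}$ and consequently
\[
\dim\mathscr{Z}^{\Zar}\le\dim\tilde{Z}^{\Zar}+\dim\bu(\tilde{Z})^{\Zar}.
\]
Substituting into the previous display yields the weak Ax-Schanuel inequality.

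For the equality clause, the chain
\[
\dim\tilde{Z}^{\Zar}+\dim\bu(\tilde{Z})^{\Zar} \;\ge\; \dim\mathscr{Z}^{\Zar} \;\ge\; \dim\tilde{Z}+\dim\tilde{Z}^{\biZar}
\]
collapses precisely when both constituent inequalities are equalities. The lower inequality becomes an equality exactly under the condition of Theorem~\ref{ThmASUnivAbVar}, namely $\tilde{Z}$ is a complex analytic irreducible component of $\tilde{Z}^{\Zar}\cap\bu^{-1}(\bu(\tilde{Z})^{\Zar})$; the upper one becomes an equality exactly when $\mathscr{Z}^{\Zar}=\tilde{Z}^{\Zar}\times\bu(\tilde{Z})^{\Zar}$ (both being irreducible with one contained in the other). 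The forward implication is therefore immediate. For the converse I plan to argue that the analytic-component condition forces the graph closure $\mathscr{Z}^{\Zar}$ to fill the whole product: if it were a proper irreducible algebraic subset, then intersecting with $\mathrm{graph}(\bu)$ and transporting through the biholomorphism $x\mapsto(x,\bu(x))$ would place $\tilde{Z}$ inside a proper algebraic subset of $\tilde{Z}^{\Zar}\cap\bu^{-1}(\bu(\tilde{Z})^{\Zar})$, contradicting maximality of $\tilde{Z}$ as an analytic irreducible component. I expect this equality-case bookkeeping to be the only delicate point, while the inequality itself is a one-line consequence of Theorem~\ref{ThmASUnivAbVar}.
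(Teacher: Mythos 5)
Your derivation of the main inequality is exactly the paper's intended route: lift $\tilde{Z}$ to $\mathscr{Z}\subseteq\mathrm{graph}(\bu)$, apply Theorem~\ref{ThmASUnivAbVar}, and bound $\dim\mathscr{Z}^{\Zar}$ above by $\dim\tilde{Z}^{\Zar}+\dim\bu(\tilde{Z})^{\Zar}$ via $\mathscr{Z}^{\Zar}\subseteq\tilde{Z}^{\Zar}\times\bu(\tilde{Z})^{\Zar}$. That part is fine, as is your observation that the two constituent inequalities must simultaneously be equalities in the equality case; combined with the ``if and only if'' in the Moreover clause of Theorem~\ref{ThmASUnivAbVar}, this does give the forward implication (equality $\Rightarrow$ component condition).

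The gap is in the converse. You want: if $\tilde{Z}$ is a component of $\tilde{Z}^{\Zar}\cap\bu^{-1}(\bu(\tilde{Z})^{\Zar})$, then $\mathscr{Z}^{\Zar}=\tilde{Z}^{\Zar}\times\bu(\tilde{Z})^{\Zar}$. Your sketch argues by contradiction, ``transporting through the biholomorphism $x\mapsto(x,\bu(x))$.'' This breaks down for two reasons. First, the transport does not preserve algebraicity: $\bu$ is only holomorphic, so $\{x\in\cX^+ : (x,\bu(x))\in\mathscr{Z}^{\Zar}\}$ is merely a complex analytic subset of $\tilde{Z}^{\Zar}\cap\bu^{-1}(\bu(\tilde{Z})^{\Zar})$, not an algebraic one, and in fact $\tilde{Z}^{\Zar}\cap\bu^{-1}(\bu(\tilde{Z})^{\Zar})$ is itself not algebraic, so the phrase ``proper algebraic subset of'' it has no meaning. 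Second, and more seriously, even if you obtain a proper analytic subset $A$ with $\tilde{Z}\subseteq A\subseteq\tilde{Z}^{\Zar}\cap\bu^{-1}(\bu(\tilde{Z})^{\Zar})$, this does not contradict $\tilde{Z}$ being an irreducible component of the latter: an irreducible component is a maximal irreducible analytic subset, and such a $\tilde{Z}$ is perfectly allowed to sit inside a proper (typically reducible) analytic subset. To contradict the component condition you would need to produce a strictly larger \emph{irreducible} analytic subset containing $\tilde{Z}$, and the transport gives you no such thing. Note also that the assertion $\dim\mathscr{Z}^{\Zar}=\dim\tilde{Z}^{\Zar}+\dim\bu(\tilde{Z})^{\Zar}$, given that the lower inequality is already tight, is literally equivalent to the equality you are trying to establish, so you cannot derive it for free from Theorem~\ref{ThmASUnivAbVar} alone; an independent input is needed.

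The missing input is the one the paper names explicitly: the Intersection Dimension Inequality. Work inside the complex submanifold $\tilde{F}:=\tilde{Z}^{\biZar}$ of $\cX^+$. Both $\tilde{Z}^{\Zar}$ and the component of $\bu^{-1}(\bu(\tilde{Z})^{\Zar})$ through $\tilde{Z}$ lie in $\tilde{F}$, and the latter has pure dimension $\dim\bu(\tilde{Z})^{\Zar}$ because $\bu|_{\tilde F}$ is a local biholomorphism onto the bi-algebraic subvariety $\bu(\tilde F)$. If $\tilde{Z}$ is an irreducible component of their intersection, the Intersection Dimension Inequality gives
\[
\dim\tilde{Z}\;\ge\;\dim\tilde{Z}^{\Zar}+\dim\bu(\tilde{Z})^{\Zar}-\dim\tilde{Z}^{\biZar},
\]
that is, $\dim\tilde{Z}^{\Zar}+\dim\bu(\tilde{Z})^{\Zar}\le\dim\tilde{Z}+\dim\tilde{Z}^{\biZar}$; combined with the main inequality this forces equality. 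This replaces your circular converse, and in fact the same two ingredients (main inequality applied to the component $\tilde{Z}'$ through $\tilde{Z}$, plus the Intersection Dimension Inequality) give both directions cleanly, which is what the paper means by its one-line remark. Your use of the iff in Theorem~\ref{ThmASUnivAbVar} for the forward direction is an acceptable shortcut, but the converse must go through the Intersection Dimension Inequality.
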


Before moving on, we point out that the ``Moreover'' part of Theorem~\ref{ThmWASUnivAbVar} immediately follows from the main part and the Intersection Dimension Inequality.

\begin{rmk} 
Our proof of Theorem~\ref{ThmASUnivAbVar} uese the work of Mok-Pila-Tsimerman \cite{MokAx-Schanuel-for} on the Ax-Schanuel theorem for pure Shimura varieties and extends their proof. As a statement itself, Theorem~\ref{ThmASUnivAbVar} implies the pure Ax-Schanuel theorem.
\end{rmk}

\subsection{Geometric description of bi-algebraic subvarieties}\label{SubsectionGeomDescriptionBiAlg}
By \cite[Theorem~8.1]{GaoTowards-the-And}, bi-algebraic subvarieties of $\mathfrak{A}_{g,D}$ are precisely the weakly special subvarieties defined by Pink \cite[Definition~4.1.(b)]{PinkA-Combination-o}. The definition of weakly special subvarieties will be given in $\mathsection$\ref{SubsectionWeaklySpecial}. For the moment we present the geometric description of these weakly special subvarieties. Recall the projection $\pi \colon \mathfrak{A}_{g,D} \rightarrow \A_{g,D}$, which is proper. We have the following result.
\begin{prop}[{\cite[Proposition~1.1]{GaoA-special-point}}]\label{PropGeomDescriptionOfBiAlgebraic}
A closed irreducible subvariety $Y$ of $\mathfrak{A}_{g,D}$ is bi-algebraic (or equivalently weakly special) if and only if the following conditions hold:
\begin{enumerate}
\item[(i)] Its projection $\pi(Y)$ is a weakly special subvariety of $\A_{g,D}$;
\item[(ii)] Up to taking a finite covering of $\pi(Y)$, we have that $Y$ is the translate of an abelian subscheme of $\pi^{-1}(\pi(Y)) \rightarrow \pi(Y)$ by a torsion section and then by a constant section.
\end{enumerate}
\end{prop}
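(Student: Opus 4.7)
The approach is to reduce to the group-theoretic description of bi-algebraic subvarieties. By \cite[Theorem~8.1]{GaoTowards-the-And}, a closed irreducible $Y \subseteq \mathfrak{A}_{g,D}$ is bi-algebraic precisely when it is weakly special in Pink's sense, i.e.\ when $Y = \bu(N(\R)^+\tilde{y})$ for some connected mixed Shimura subdatum $(Q,\cY^+) \subseteq (P_{2g,D,\mathrm{a}},\cX_{2g,\mathrm{a}}^+)$, some normal subgroup $N \lhd Q$, and some $\tilde{y}\in\cY^+$. The plan is to pass between this group-theoretic description and the geometric one in (i)-(ii) by projecting via $\tilde{\pi}$ to $\A_{g,D}$ and then analyzing the fiberwise structure separately.

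For the direction ``bi-algebraic $\Rightarrow$ (i) and (ii)'', I would apply $\tilde{\pi}$ to the description above. Writing $V_N := N \cap V_{2g}$ and $\bar{N} := \tilde{\pi}(N)$, one has $\pi(Y) = \bu_G(\bar{N}(\R)^+\tilde{\pi}(\tilde{y}))$, which is weakly special in $\A_{g,D}$, giving (i). The fibers of $\pi|_Y$ are images under $\bu$ of the $V_N(\R)$-orbits inside $\tilde{y}+V_{2g}(\R)$. Combined with the Betti trivialization of $\pi^{-1}(\pi(Y))\to\pi(Y)$ described in $\mathsection$\ref{SubsectionHodgeTheoreticX2ga}, the $\Q$-vector subgroup $V_N \subseteq V_{2g}$ cuts out an abelian subscheme $B$ of $\pi^{-1}(\pi(Y)) \to \pi(Y)$ (after passing to a finite \'{e}tale covering that eliminates the $V_{2g}(\Z)$-ambiguity), and the class of $\tilde{y}$ in $V_{2g}(\R)/V_N(\R)$ supplies the translation: its $\Q$-part becomes a torsion section while its remaining real part gives the constant section, yielding (ii).

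For the reverse implication, I would reconstruct the weakly special structure. Starting from (i), pick a pure Shimura subdatum $(\bar{Q},\bar{\cY}^+)$ and a normal subgroup $\bar{N} \lhd \bar{Q}$ expressing $\pi(Y)$ as weakly special, and lift to $(Q,\cY^+) := \tilde{\pi}^{-1}(\bar{Q},\bar{\cY}^+)$ with unipotent radical equal to $V_{2g}$ restricted to this subdatum. The abelian subscheme in (ii) corresponds to a $\Q$-vector subgroup $V_N \subseteq V_{2g}$; the fact that this subscheme extends across the weakly special base $\pi(Y)$ is precisely the statement that $V_N$ is stable under $\bar{N}$, so $N := V_N \rtimes \bar{N}$ is a normal $\Q$-subgroup of $Q$. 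The torsion section contributes a rational base point in $V_{2g}(\Q)/V_N(\Q)$, and the constant section is accommodated by modifying the Levi splitting of $Q$, equivalently by shifting the base point $\tilde{y}$ along $V_{2g}(\R)/V_N(\R)$. One then checks $\bu(N(\R)^+\tilde{y}) = Y$.

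The main obstacle is the constant section: it represents a real, not rational, translation, and so cannot be encoded directly by the $\Q$-group $N$. It must instead enter through the choice of Levi splitting of $Q$, equivalently the choice of base point $\tilde{y}$. Making this match up precisely requires the semi-algebraic realization of $\cX^+$ given in Proposition~\ref{PropositionRealizationOfX} together with the identification $\rho \colon V(\R)\times \cX_G^+ \xrightarrow{\sim}\cX^+$ from $\mathsection$\ref{SubsectionRealizationOfX}: one must show that varying the Levi decomposition produces exactly the family of constant-section translations that are uniform across $\pi(Y)$, and verify that the resulting subvariety is independent of the splitting up to correspondingly adjusting $\tilde{y}$. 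A secondary technicality is the ``finite covering'' in (ii): it reflects the passage to a neat congruence subgroup needed to rigidify the abelian subscheme $B$ and to make the torsion translation well-defined as a single section.
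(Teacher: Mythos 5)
This proposition is quoted from \cite[Proposition~1.1]{GaoA-special-point}; the present paper does not prove it but only remarks that it follows from Proposition~\ref{PropGeomOfsgInUnivAbVar}, itself also cited without proof, so there is no in-text argument to compare against. Taking your proposal on its own terms, the plan of passing between the weakly-special description $\bu(N(\R)^+\tilde{y})$ (via \cite[Theorem~8.1]{GaoTowards-the-And}) and the geometric description using the Levi splitting is the right framework, but each direction has a gap. In the forward direction, fixing a Levi $Q=V_Q\rtimes G_Q$ and writing $\tilde{y}=(\tilde{y}_V,\tilde{y}_G)$, the translation over the base point $g\tilde{y}_G$ is $g\tilde{y}_V\bmod V_N(\R)$ with $g\in G_N(\R)^+$, which a priori depends on $g$; for it to define a single section, and for its irrational part to land inside the \emph{constant} abelian subscheme of $\pi^{-1}(\pi(Y))\to\pi(Y)$, one needs $G_N$ to act trivially on $V_Q/V_N$. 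This is true (from $N\lhd Q^{\der}$ one gets $[\lie N,\lie V_Q]\subseteq\lie N\cap\lie V_Q=\lie V_N$), but it is the crucial point and must be stated: it is what makes your phrase ``its remaining real part gives the constant section'' correct.

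In the reverse direction the gap is more serious. You take $(Q,\cY^+)=\tilde{\pi}^{-1}(\bar{Q},\bar{\cY}^+)$, whose unipotent radical is all of $V_{2g}$, and assert that $N:=V_N\rtimes\bar{N}$ is normal in $Q$ because $V_N$ is $\bar{N}$-stable. Normality of $V_N\rtimes\bar{N}$ in $V_{2g}\rtimes\bar{Q}^{\der}$ requires $(1-n)v\in V_N$ for all $n\in\bar{N}$ and $v\in V_{2g}$, i.e.\ that $\bar{N}$ act trivially on $V_{2g}/V_N$; this already fails for a torsion section of $\mathfrak{A}_{g,D}\to\A_{g,D}$, where $\bar{N}=\Sp_{2g,D}$, $V_N=0$, and $\{0\}\rtimes\Sp_{2g,D}$ is not normal in $V_{2g}\rtimes\Sp_{2g,D}$. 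One must instead take $Q$ with a strictly smaller unipotent radical (roughly, $V_N$ plus the $\bar{N}$-fixed subspace of $V_{2g}$), which is exactly what the constant-section hypothesis in (ii) permits and is where the substance of the reverse implication lies. As written, the reconstruction step does not go through.
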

Let us explain condition~(ii) in more details. We have that $\pi(Y)$ is a closed irreducible subvariety of $\A_{g,D}$, hence $\pi^{-1}(\pi(Y)) = \mathfrak{A}_{g,D}|_{\pi(Y)}$ is an abelian scheme over $\pi(Y)$. Condition~(ii) means: there exists a finite covering $B' \rightarrow \pi(Y)$ such that under the base change $\mathfrak{A}' := B' \times_{\pi(Y)} \pi^{-1}(\pi(Y))$ and the natural projection $p' \colon \mathfrak{A}' \rightarrow \pi^{-1}(\pi(Y))$, we have $Y = p'(\mathfrak{B}+\sigma + \sigma_0)$, where $\mathfrak{B}$ is an abelian subscheme of $\mathfrak{A}' \rightarrow B'$, $\sigma$ is a torsion section of $\mathfrak{A}'\rightarrow B'$, and $\sigma_0$ is a constant section of (the largest constant abelian subscheme of) $\mathfrak{A}'\rightarrow B'$.

Proposition~\ref{PropGeomDescriptionOfBiAlgebraic} is proven as a consequence of the following result.
\begin{prop}[{\cite[Proposition~3.3]{GaoA-special-point}}]\label{PropGeomOfsgInUnivAbVar}
Let $B$ be an irreducible subvariety of $\A_{g,D}$. Then
\[
\begin{array}{c}
\{\text{Up to taking a finite covering of }B,\text{ the translates of an abelian subscheme} \\ 
\text{of }\pi^{-1}(B) \rightarrow B\text{ by a torsion section and then by a constant section}\} \\
= \{\text{irreducible components of }\pi^{-1}(B) \cap F : F\text{ weakly special in }  \mathfrak{A}_{g,D} \text{ with } B \subseteq \pi(F)\}.
\end{array}
\]
\end{prop}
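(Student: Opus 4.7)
The plan is to establish the asserted equality by proving both inclusions, with Proposition~\ref{PropGeomDescriptionOfBiAlgebraic} doing the heavy lifting in both directions.

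For the inclusion $\supseteq$, let $F$ be a weakly special subvariety of $\mathfrak{A}_{g,D}$ with $B \subseteq \pi(F)$. Applying Proposition~\ref{PropGeomDescriptionOfBiAlgebraic} to $F$, there exists a finite covering $B'_{\pi(F)} \to \pi(F)$ with base change $\mathfrak{A}'_{\pi(F)} := B'_{\pi(F)} \times_{\pi(F)} \pi^{-1}(\pi(F))$, an abelian subscheme $\mathfrak{B}_F \subseteq \mathfrak{A}'_{\pi(F)}$, a torsion section $\sigma$, and a constant section $\sigma_0$, such that $F = p'(\mathfrak{B}_F + \sigma + \sigma_0)$. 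I would then take the fiber product $B' := B \times_{\pi(F)} B'_{\pi(F)}$, which is a finite covering of $B$, and restrict the entire picture to $B'$. Pulling $\mathfrak{B}_F$ back yields an abelian subscheme of $\pi^{-1}(B)|_{B'}$; a torsion section pulls back to a torsion section; and a constant section, being pulled back from a fixed abelian variety, remains constant. Hence each irreducible component of $\pi^{-1}(B) \cap F$ falls into the left-hand set.

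For the inclusion $\subseteq$, suppose $Y = p'(\mathfrak{B} + \sigma + \sigma_0)$ over a finite cover $B' \to B$. I need to produce a weakly special $F \subseteq \mathfrak{A}_{g,D}$ with $B \subseteq \pi(F)$ such that $Y$ is an irreducible component of $\pi^{-1}(B) \cap F$. The natural candidate for $\pi(F)$ is the smallest weakly special subvariety $W$ of $\A_{g,D}$ containing $B$: by construction, the generic Mumford--Tate group of the variation of Hodge structures $R_1\pi_*\underline{\Z}$ over $W$ coincides with that over $B$. This coincidence lets me extend the triple $(\mathfrak{B},\sigma,\sigma_0)$ from $B'$ to a triple $(\mathfrak{B}^W,\sigma^W,\sigma_0^W)$ defined over a suitable finite covering $W' \to W$ with $B'$ sitting inside $W' \times_W B$: the abelian subscheme extends because sub-variations of Hodge structure are controlled by subrepresentations of the generic Mumford--Tate group, the torsion section extends because the torsion locus $\mathfrak{A}_{g,D}[N] \to \A_{g,D}$ is finite étale, and the constant section extends via the isotypic decomposition of the fixed part. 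Then $F := p'^W(\mathfrak{B}^W + \sigma^W + \sigma_0^W)$ is weakly special by Proposition~\ref{PropGeomDescriptionOfBiAlgebraic}, and $Y$ is an irreducible component of $\pi^{-1}(B) \cap F$ by construction and a dimension count.

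The main obstacle I expect is the extension step in the $\subseteq$ direction, and specifically the constant-section part: the ``largest constant abelian subscheme'' is a base-dependent notion, so one must verify that a constant section over $B'$ really arises from (the pullback of) a constant section over the larger base $W'$. This reduces, after isogeny, to decomposing the universal VHS over $W$ into a fixed part and a part with no nontrivial fixed sub-Hodge structure, and checking that any constant section over $B'$ factors through the fixed part; this is a consequence of the fact that the generic endomorphism algebra is preserved when passing between $B$ and $W$. Once this extension is in hand, the remainder is bookkeeping of the finite covers and a routine dimension comparison to identify $Y$ as a component of $\pi^{-1}(B) \cap F$.
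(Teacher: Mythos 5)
Your approach derives Proposition~\ref{PropGeomOfsgInUnivAbVar} from Proposition~\ref{PropGeomDescriptionOfBiAlgebraic}, but the paper explicitly states the opposite logical order: ``Proposition~\ref{PropGeomDescriptionOfBiAlgebraic} is proven as a consequence of the following result.'' In the source \cite{GaoA-special-point}, the present proposition is the primitive result, presumably established by a direct group-theoretic translation between the geometric data $(\mathfrak{B},\sigma,\sigma_0)$ and the Shimura-theoretic data $(Q,\cY^+,N,\tilde{y})$, and Proposition~\ref{PropGeomDescriptionOfBiAlgebraic} is the derived corollary. Deducing the primitive result from the corollary inverts this dependency and is circular unless you supply an independent proof of Proposition~\ref{PropGeomDescriptionOfBiAlgebraic} that does not pass through the statement you are trying to prove; neither the present paper nor your proposal offers one.

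Separately, the extension step in the $\subseteq$ inclusion misidentifies the governing invariant. You claim the abelian subscheme $\mathfrak{B}$ extends from (a cover of) $B$ to (a cover of) $W=B^{\biZar}$ ``because sub-variations of Hodge structure are controlled by subrepresentations of the generic Mumford--Tate group,'' invoking the equality of MT groups over $B$ and $W$. But extendability of the underlying sub-local system is governed by the algebraic \emph{monodromy} group: you need the monodromy of $W$ to preserve the subspace defining $\mathfrak{B}$, and that holds because the algebraic monodromy groups of $B$ and $W$ coincide --- which is essentially the group-theoretic description of the smallest weakly special subvariety containing $B$, not an MT-group statement. Likewise, the ``largest constant abelian subscheme'' is precisely the monodromy-trivial part of the local system, so extending $\sigma_0$ is again a monodromy fact. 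The MT groups enter only afterwards, to guarantee that the extended sub-local system is still a sub-VHS. As written, your proposal conflates these two invariants, and the central extension step is therefore not justified; once the circularity issue is resolved, this is the point that would need to be rewritten with the monodromy group in the leading role.
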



\subsection{Review on weakly special subvarieties}\label{SubsectionWeaklySpecial}
Now let us give the definition of weakly special subvarieties following Pink. Let $(P,\cX^+)$ and $M$ be as above Theorem~\ref{ThmASUnivAbVar}.
\begin{defn}
\begin{enumerate}
\item[(i)] A subset $\tilde{Y}$ of $\cX^+$ is said to be \textbf{weakly special} if there exist a connected mixed Shimura subdatum of Kuga type $(Q,\cY^+)$ of $(P,\cX^+)$, a normal subgroup $N$ of $Q^{\der}$, and a point $\tilde{y} \in \cY^+$ such that $\tilde{Y} = N(\R)^+\tilde{y}$.
\item[(ii)] A subvariety $Y$ of $M$ is said to be \textbf{weakly special} if $Y = \bu(\tilde{Y})$ for some weakly special subset $\tilde{Y}$ of $\cX^+$.
\end{enumerate}
\end{defn}
Our formulation is slightly different from \cite[Definition~4.1.(b)]{PinkA-Combination-o}, but it is not hard to show that they are equivalent; see \cite[$\mathsection$5.1]{GaoTowards-the-And}.

\section{Basic Setting-up}\label{SectionSetup}
In this section, we fix some basic setting-up to prove Theorem~\ref{ThmASUnivAbVar}. Let $M$ be a connected mixed Shimura variety of Kuga type associated with $(P,\cX^+)$. Let $\bu \colon \cX^+ \rightarrow M$ be the uniformization. Use $\Delta \subseteq \cX^+ \times M$ to denote $\mathrm{graph}(\bu)$. Let $\tilde{\pi} \colon (P,\cX^+) \rightarrow (G,\cX^+_G)$ and $\pi \colon M \rightarrow M_G$ be as in Notation~\ref{NotationMSV}.

Let $\mathscr{Z} = \mathrm{graph}(\tilde{Z} \rightarrow \bu(\tilde{Z}))$ be a complex analytic irreducible subset of $\mathrm{graph}(\bu)$. Let $\mathscr{Z}^{\Zar}$ be the smallest algebraic subvariety of $\cX^+ \times M$ containing $\mathscr{Z}$. We wish to prove
\[
\dim \mathscr{Z}^{\Zar} - \dim \mathscr{Z} \ge \dim \tilde{Z}^{\biZar}.
\]
It is clear that we may replace $\mathscr{Z}$ by a complex analytic irreducible component of $\mathscr{Z}^{\Zar} \cap \Delta$. Hence Theorem~\ref{ThmASUnivAbVar} is equivalent to the following statement.
\begin{thm}\label{ThmEquivalentFormOfAS}
Let $\mathscr{B}$ be an irreducible algebraic subvariety of $\cX^+ \times M$, and let $\mathscr{Z}$ be a complex analytic irreducible component of $\mathscr{B} \cap \Delta$. Assume $\mathscr{B} = \mathscr{Z}^{\Zar}$. Then
\[
\dim \mathscr{B} - \dim \mathscr{Z} \ge \dim \tilde{Z}^{\biZar},
\]
where $\tilde{Z}$ is the image of $\mathscr{Z}$ under the natural projection $\cX^+ \times M \rightarrow \cX^+$.
\end{thm}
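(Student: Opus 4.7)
\medskip

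\textbf{Proof proposal for Theorem~\ref{ThmEquivalentFormOfAS}.}

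The plan is to adapt the Pila--Zannier philosophy used by Mok--Pila--Tsimerman in the pure case, combined with the projection $\tilde{\pi}\colon(P,\cX^+)\to(G,\cX_G^+)$ so that the pure Ax--Schanuel theorem can be invoked on the horizontal direction while a new argument is made in the vertical (unipotent) direction. I would argue by contradiction and by induction on $\dim\tilde{Z}^{\biZar}$: up to replacing $(P,\cX^+)$ by a suitable connected mixed Shimura subdatum of Kuga type, I may assume $\tilde{Z}^{\biZar}=\cX^+$, so that the task becomes showing $\dim\mathscr{B}-\dim\mathscr{Z}\geq\dim\cX^+$ under the assumption that $\mathscr{B}=\mathscr{Z}^{\Zar}$ and $\tilde{Z}^{\biZar}=\cX^+$.

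First I would fix a semi-algebraic fundamental domain $F\subseteq\cX^+$ for the action of $\Gamma$, using the realization from~$\mathsection$\ref{SubsectionRealizationOfX} so that everything in sight is definable in $\R_{\mathrm{an},\exp}$. The key object is the set
\[
\Xi(\mathscr{B})=\bigl\{\gamma\in P(\R)^+ : \dim\bigl(\gamma\mathscr{B}\cap\mathscr{B}\bigr)\geq \dim\mathscr{Z}\text{ near some point of }\mathscr{Z}\bigr\},
\]
which is definable and contains ``many'' elements of $\Gamma$ whenever the desired inequality fails. Applying the counting theorem of Pila--Wilkie to $\Xi(\mathscr{B})\cap\Gamma$, one produces a positive-dimensional semi-algebraic arc of such $\gamma$'s, hence a positive-dimensional $\Q$-algebraic subgroup $H\leq P$ which stabilizes $\mathscr{B}$, i.e.\ the $\Q$-stabilizer of $\mathscr{B}$ is ``big''. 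The heights and volume estimates needed to feed Pila--Wilkie come from the pure side via the Mok--Pila--Tsimerman bounds; however, in the mixed case one has to control the growth of the vertical part of $\tilde{Z}$ relative to its horizontal projection $\tilde{Z}_G=\tilde{\pi}(\tilde{Z})$. I would split into two regimes: if the vertical growth is polynomially bounded by the horizontal growth, the pure Ax--Schanuel theorem of~\cite{MokAx-Schanuel-for} applied to $\mathscr{Z}_G=(\tilde\pi\times\pi)(\mathscr{Z})$ supplies the bound directly; if instead the horizontal growth is bounded by the vertical, I would run Tsimerman's \emph{polynomial-for-free in unipotent groups} variant: because $V=\cR_u(P)$ is a vector group on which $\Gamma\cap V(\Q)$ acts by translations, lattice points in the vertical fiber are automatically polynomially distributed, yielding the counting input for free.

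Once $H\leq P$ is produced, the next step is to promote $H$ to a \emph{normal} subgroup of $P$. In the pure case any such normal subgroup already induces a Shimura decomposition of $(G,\cX_G^+)$, which is what Mok--Pila--Tsimerman exploit. In the mixed case this fails, and one needs the argument of~$\mathsection$\ref{SubsectionLastStepTowardsNormality}: spread $H$ out using monodromy and Mumford--Tate considerations so that its reductive image in $G$ becomes normal in $G$, then lift back to a normal subgroup of $P$ by adjoining the relevant $V$-piece. Having $H\triangleleft P$, one passes to the quotient $(P,\cX^+)/H$, which is still of Kuga type; the image of $\tilde{Z}$ there lies in a bi-algebraic subvariety of strictly smaller dimension, contradicting $\tilde{Z}^{\biZar}=\cX^+$ and closing the induction.

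The principal obstacles are the two new inputs just described: establishing the bigness of the $\Q$-stabilizer when the vertical direction dominates (which requires the unipotent polynomial-for-free trick, as the pure volume bounds are not directly applicable), and the normalization step in the mixed setting (since normal subgroups of $P$ do not automatically split the Shimura datum). The equality case of Theorem~\ref{ThmASUnivAbVar} then follows formally from the main inequality combined with the intersection dimension inequality applied to the components of $\tilde{Z}^{\Zar}\cap\bu^{-1}(\bu(\tilde{Z})^{\Zar})$.
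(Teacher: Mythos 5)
Your proposal follows essentially the same route as the paper: a Pila--Wilkie counting argument producing a positive-dimensional $\Q$-stabilizer $H$ of $\mathscr{B}$, with the mixed-specific two-regime case analysis (horizontal growth dominant versus vertical growth dominant, the latter handled by Tsimerman's polynomial-for-free trick in the unipotent group $V$), followed by a normalization step showing $H$ may be taken normal in $P$, and finally a quotient by $H$ to close the argument. You also correctly identify the two genuinely new obstacles relative to the pure case. This matches the structure of $\mathsection$\ref{SectionSetup}--$\mathsection$\ref{SectionEndOfProof}.

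One slip worth flagging: the reduction ``I may assume $\tilde{Z}^{\biZar}=\cX^+$'' is not available. Replacing $(P,\cX^+)$ by the smallest connected mixed Shimura subdatum of Kuga type containing $\tilde{Z}$ only ensures that $\tilde{Z}$ is Hodge generic in $(P,\cX^+)$; the bi-algebraic closure then has the form $N(\R)^+\tilde{z}$ for some normal $N\lhd P$, which is a weakly special subset but in general \emph{not} a mixed Shimura subvariety and can be a proper subset of $\cX^+$ (e.g.\ when $G^{\der}$ has several almost-simple factors). The bound you would then need is $\dim\mathscr{B}-\dim\mathscr{Z}\ge\dim N(\R)^+\tilde{z}$, not $\ge\dim\cX^+$; the latter is simply false when $\tilde{Z}$ itself is weakly special and proper. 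The paper therefore carries the group $N$ through the argument rather than reducing it away, and the final dimension count in $\mathsection$\ref{SectionEndOfProof} is a bookkeeping computation with $N$, $H\le N$, and the quotient datum $(P,\cX^+)/H$, not an induction on $\dim\tilde{Z}^{\biZar}$ per se. Since the rest of your strategy is indifferent to this point (the quotient step still works with the correct target), this is a statement of intent that needs correcting rather than a structural defect.
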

We shall prove the Ax-Schanuel theorem in the form of Theorem~\ref{ThmEquivalentFormOfAS}. Since any connected mixed Shimura subvariety of $M$ is bi-algebraic, we may replace $(P,\cX^+)$ by the smallest connected mixed Shimura datum such that $\tilde{Z} \subseteq \cX^+$ and replace $M$ accordingly. Then we still have $\mathscr{B} \subseteq \cX^+ \times M$. Use
\[
\mathrm{pr}_{\cX^+} \colon \cX^+ \times M \rightarrow \cX^+, \quad \mathrm{pr}_M \colon \cX^+ \times M \rightarrow M
\]
to denote the natural projections. They are clearly algebraic. We still have $\tilde{Z} = \mathrm{pr}_{\cX^+}(\mathscr{Z})$.

Consider the action of $\Gamma$ on $\cX^+ \times M$ via its action on the first factor. Then $\Delta$ is $\Gamma$-invariant.

\subsection{Fundamental Set}\label{SubsectionFundamentalSet}

Fix a Shimura embedding $(P,\cX^+) \hookrightarrow (G_0,\cD^+) \times (P_{2g,D,\mathrm{a}}, \cX_{2g,\mathrm{a}}^+)$. Such an embedding exists by Theorem~\ref{ThmReductionLemma}.

Let $\mathfrak{F}_{\Sp_{2g}}$ be a Siegel fundamental set for the action of $\Sp_{2g,D}(\Z)$ on $\mathfrak{H}_g^+$. Let $\mathfrak{F}_{P_{2g,\mathrm{a}}} \subseteq \cX_{2g,\mathrm{a}}^+$ be defined as $i_{\mathfrak{H}_g^+}\left( (-k,k)^{2g} \times \mathfrak{F}_{\Sp_{2g}} \right)$ for some $k \ge 1$, where
\[
i_{\mathfrak{H}_g^+} \colon V_{2g}(\R) \times \mathfrak{H}_g^+ \cong \cX_{2g,\mathrm{a}}^+
\]
is the real-algebraic map defined in \eqref{EqTwoComplexStrOfX2g}. Then $\mathfrak{F}_{P_{2g,\mathrm{a}}}$ is a fundamental set for the action of $\Z^{2g} \rtimes \Sp_{2g,D}(\Z)$ on $\cX_{2g,\mathrm{a}}^+$.

Let $\mathfrak{F}_{\cD^+}$ be a fundamental set for the action of $\Gamma_0$ on $\cD^+$ as in \cite[Theorem~1.9]{KlinglerThe-Hyperbolic-}. Then 
$\mathfrak{F} = \mathfrak{F}_{\cD^+} \times \mathfrak{F}_{P_{2g,\mathrm{a}}}$ 
is a fundamental set for $\bu \colon \cD^+ \times \cX_{2g,\mathrm{a}}^+ \rightarrow S_0 \times \mathfrak{A}_g$. Moreover $\bu|_{\mathfrak{F}}$ is definable in $\R_{\mathrm{an},\exp}$ by \cite{PeterzilDefinability-of}.

It is possible to choose $\mathfrak{F}_{\Sp_{2g}}$, $k$ and $\mathfrak{F}_{\cD^+}$ such that $\mathfrak{F}\cap \cX^+$ is a fundamental set for $\bu \colon \cX^+ \rightarrow M$. Replace $\mathfrak{F}$ by $\mathfrak{F} \cap \cX^+$, then $\bu|_{\mathfrak{F}}$ is definable in $\R_{\mathrm{an},\exp}$. We may furthermore enlarge $\mathfrak{F}$ such that $\mathfrak{F}$, resp. $\mathfrak{F}_G:= \tilde{\pi}(\mathfrak{F})$, is open in $\cX^+$, resp. in $\cX_G^+$, in the usual topology.


\subsection{Some results on the pure part}
Let $\tilde{Z}$ be as in Theorem~\ref{ThmEquivalentFormOfAS}. In this subsection we summarize some known results on the pure part, which will be used in the proof of Theorem~\ref{ThmBignessOfQStab}. Assume $\dim \tilde{\pi}(\tilde{Z}) > 0$. All distances and norms $|| \cdot ||_{\infty}$ below are as in \cite[$\mathsection$5.1]{KlinglerThe-Hyperbolic-}.

Fix $\tilde{z}_{0,G} \in \tilde{\pi}(\tilde{Z})$. All constants below depend only on $\cX^+_G$, $\mathfrak{F}_G$, $\tilde{\pi}(\tilde{Z})$, and $\tilde{z}_{0,G}$.

For any $T > 0$, define
\begin{equation}\label{EqGeodesicBallPurePart}
B^{\mathrm{horz}}(\tilde{z}_{0,G}, T) = \{ \tilde{z}_G \in \cX^+_G : d^{\mathrm{horz}}(\tilde{z}_G, \tilde{z}_{0,G}) < \log T \},
\end{equation}
where $d^{\mathrm{horz}}(\cdot, \cdot)$ is the $G^{\mathrm{der}}(\R)^+$-invariant hyperbolic metric on $\cX^+_G$.

\begin{lemma}\label{LemmaInftyNormPointInTheBase}
For each $\tilde{z}_G \in B^{\mathrm{horz}}(\tilde{z}_{0,G}, T) \subseteq \cX^+_G$, we have
\begin{equation*}\label{EqInftyNormPointInTheBase}
|| \tilde{z}_G ||_{\infty} \le c_0 T.
\end{equation*}
\end{lemma}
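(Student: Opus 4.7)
\medskip

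\noindent\textbf{Proof proposal.} The statement is a comparison between the intrinsic hyperbolic distance on $\cX^+_G$ and an extrinsic $\infty$-norm coming from a fixed semi-algebraic realization. The plan is to transport the inequality to a statement about the group action: since $G^{\mathrm{der}}(\R)^+$ acts transitively on $\cX^+_G$, pick $g\in G^{\mathrm{der}}(\R)^+$ with $g\cdot\tilde z_{0,G}=\tilde z_G$. By $G^{\mathrm{der}}(\R)^+$-invariance of $d^{\mathrm{horz}}$ and the Cartan decomposition $g=k_1 a k_2$ (with $K$ the stabilizer of $\tilde z_{0,G}$), the hyperbolic distance $d^{\mathrm{horz}}(\tilde z_G,\tilde z_{0,G})$ is a bi-$K$-invariant function of $g$ comparable to $\log\|a\|_{\mathrm{op}}$ (for the operator norm coming from the fixed faithful representation used to realize $\cX^+_G$ inside its compact dual $\cX^\vee_G$). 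Hence $d^{\mathrm{horz}}(\tilde z_G,\tilde z_{0,G})<\log T$ implies $\|g\|_{\mathrm{op}}\le c_1 T$ for some constant $c_1$ depending only on the embedding and on $\tilde z_{0,G}$.

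Next I would use the fact, recorded in \cite{KlinglerThe-Hyperbolic-}, that the $\infty$-norm $\|\cdot\|_\infty$ on $\cX^+_G\subseteq\cX^\vee_G$ is induced from a fixed linear representation and therefore satisfies $\|g\cdot x\|_\infty\le C\,\|g\|_{\mathrm{op}}\,\|x\|_\infty$ on any bounded piece of $\cX^+_G$ relevant to us. Applying this with $x=\tilde z_{0,G}$ gives
\[
\|\tilde z_G\|_\infty=\|g\cdot\tilde z_{0,G}\|_\infty\le C\,\|g\|_{\mathrm{op}}\,\|\tilde z_{0,G}\|_\infty\le c_0 T,
\]
with $c_0=Cc_1\|\tilde z_{0,G}\|_\infty$, which depends only on $\cX^+_G$, $\mathfrak F_G$, $\tilde\pi(\tilde Z)$, and $\tilde z_{0,G}$, as required.

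The only real point to verify is the linear (rather than polynomial) comparison between $\|g\|_{\mathrm{op}}$ and $e^{d^{\mathrm{horz}}}$; this is the step where the choice of metric matters. The hyperbolic metric on $\cX^+_G$ being a Bergman/Kähler-Einstein metric of negative curvature, the restricted root decomposition of $\mathfrak g^{\mathrm{der}}$ shows that $d^{\mathrm{horz}}(g\tilde z_{0,G},\tilde z_{0,G})$ equals (up to a constant depending only on the normalization) the maximum $|\alpha(\log a)|$ over the restricted roots $\alpha$, and this maximum dominates $\log\|g\|_{\mathrm{op}}$ up to an additive constant. I would cite the corresponding estimate in Klingler's paper \cite[$\mathsection$5.1]{KlinglerThe-Hyperbolic-} or, alternatively, the Harish-Chandra realization argument used in \cite{MokAx-Schanuel-for} in the pure case; there is no need to redo the computation here. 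The main (and only) potential obstacle is to make sure that the constants really depend only on the listed data, which boils down to checking that the comparison constants between $\|\cdot\|_\infty$, $\|\cdot\|_{\mathrm{op}}$, and $d^{\mathrm{horz}}$ are uniform on a bounded neighbourhood containing $\tilde z_{0,G}$; this is automatic once the fundamental set and the embedding are fixed.
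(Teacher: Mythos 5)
Your proposal follows the same route as the paper: pick $g$ with $g\cdot\tilde z_{0,G}=\tilde z_G$, bound $\|g\|$ in terms of the hyperbolic distance, and then push the bound to $\|\tilde z_G\|_\infty$. The paper simply invokes \cite[Lemma~5.4]{KlinglerThe-Hyperbolic-} to get $\log\|g\|_\infty\le d^{\mathrm{horz}}(\tilde z_G,\tilde z_{0,G})<\log T$ in one step, whereas you re-derive that inequality from the Cartan decomposition and then convert $\|g\|$ into $\|\tilde z_G\|_\infty$; this is essentially the content of the cited lemma spelled out, not a different argument.
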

\begin{proof}
For $\tilde{z}_G \in B^{\mathrm{horz}}(\tilde{z}_{0,G}, T) \subseteq \cX^+_G$, there exists $g \in G(\R)^+$ such that $g\cdot \tilde{z}_{0,G} = \tilde{z}_G$. Then by \cite[Lemma~5.4]{KlinglerThe-Hyperbolic-}, we have
\[
\log || g ||_{\infty} \le d^{\mathrm{horz}}(\tilde{z}_G, \tilde{z}_{0,G}) < \log T.
\]
Hence we are done.
\end{proof}

Now let us consider
\begin{equation}\label{EqGammaGIntersectionNonEmpty}
L_G(T):=\{ \gamma_G \in \Gamma_G : \gamma_G \mathfrak{F}_G \cap B^{\mathrm{horz}}(\tilde{z}_{0,G}, T) \not= \emptyset \}.
\end{equation}

\begin{lemma}\label{LemmaGammaGUpperBound}
There exist two constants $c_1, c_2 >0$ such that
\begin{equation*}\label{EqGammaGUpperBound}
H(\gamma_G) \le c_1 T^{c_2}\quad \text{ for any }\quad \gamma_G \in L_G(T).
\end{equation*}
\end{lemma}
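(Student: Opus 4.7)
The plan is to translate the horizontal-ball condition into a polynomial height bound via Siegel reduction theory for reductive groups. Fix $\gamma_G \in L_G(T)$ and pick a witness point $\tilde{z}_G \in \gamma_G \mathfrak{F}_G \cap B^{\mathrm{horz}}(\tilde{z}_{0,G},T)$. Write $\tilde{z}_G = \gamma_G \tilde{f}$ with $\tilde{f} \in \mathfrak{F}_G$; as in the proof of Lemma~\ref{LemmaInftyNormPointInTheBase}, \cite[Lemma~5.4]{KlinglerThe-Hyperbolic-} produces $g \in G(\R)^+$ with $g\cdot \tilde{z}_{0,G} = \tilde{z}_G$ and $\|g\|_\infty < T$. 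Then $\gamma_G^{-1} g$ sends $\tilde{z}_{0,G}$ into $\mathfrak{F}_G$ (concretely, to $\tilde{f}$).

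After possibly enlarging $\mathfrak{F}_G$ by a fixed compact set, which only affects the final constants, I may assume $\tilde{z}_{0,G} \in \mathfrak{F}_G$. The polynomial form of the Siegel property for reductive $\Q$-groups (as used in \cite{KlinglerThe-Hyperbolic-}, and going back to Borel--Harish-Chandra reduction theory) then furnishes constants $C, \kappa > 0$ depending only on $G$, $\mathfrak{F}_G$ and $\tilde{z}_{0,G}$ such that
\[
\|\gamma_G^{-1} g\|_\infty \le C \|g\|_\infty^{\kappa} \le C T^{\kappa}.
\]
Writing $\gamma_G = g (\gamma_G^{-1} g)^{-1}$ and using that on a fixed faithful rational representation of the reductive group $G$ both matrix multiplication and inversion are polynomially controlled in $\|\cdot\|_\infty$, one deduces $\|\gamma_G\|_\infty \le c_3 T^{c_4}$ for suitable constants. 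Since $H(\gamma_G)$ is comparable to $\|\gamma_G\|_\infty$, the required bound $H(\gamma_G) \le c_1 T^{c_2}$ follows.

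The main subtle point is to invoke the polynomial Siegel property in the correct shape: in the literature it is often phrased as a bound $\|h\|_\infty \le C(1 + \|h \cdot \tilde{z}_0\|_\infty)^{\kappa}$ for $h \in G(\R)^+$ and $\tilde{z}_0 \in \mathfrak{F}_G$, and one has to verify that the specific output $\tilde{f} \in \mathfrak{F}_G$ appearing here, which is \emph{not} a priori of bounded height, still produces a polynomial bound in $T$. This is ensured by the Cartan/unipotent decomposition of the Siegel set, in which translates at bounded hyperbolic distance have polynomially-bounded coordinates; the bookkeeping to pass from $\|\cdot\|_\infty$ on $G(\R)^+$ to the height $H(\cdot)$ on $\Gamma_G$ is otherwise routine.
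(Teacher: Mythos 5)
Your detour through the auxiliary element $g$ is unnecessary and, as written, contains a genuine gap. You claim that the polynomial Siegel property ``furnishes'' the bound $\|\gamma_G^{-1}g\|_\infty \le C\|g\|_\infty^\kappa$, but the inequality you quote from the literature has a different shape: it controls $\|h\|_\infty$ in terms of $\|h\cdot \tilde{z}_{0,G}\|_\infty$, i.e.\ in terms of $\|\tilde{f}\|_\infty$, not in terms of $\|g\|_\infty$. You acknowledge this yourself (``the main subtle point''), but the resolution you offer --- that $\tilde{f}$ has polynomially bounded coordinates because ``translates at bounded hyperbolic distance have polynomially-bounded coordinates'' --- does not apply: $\tilde{f}$ is at bounded hyperbolic distance from $\gamma_G^{-1}\tilde{z}_{0,G}$, not from the fixed basepoint $\tilde{z}_{0,G}$, and a priori the element of the Siegel set representing the point $\bu_G(\tilde{z}_G)$ could be arbitrarily far out toward a cusp. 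Bounding $\|\tilde{f}\|_\infty$ in terms of $T$ is true but essentially equivalent to the conclusion you are trying to prove, so your route is at best circular and at worst leaves the key step unjustified.

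The paper's proof avoids all of this. The correct shape of the polynomial Siegel property, which is exactly \cite[Lemma~5.5]{KlinglerThe-Hyperbolic-}, bounds the height of a lattice element that brings a point into the fundamental set in terms of the norm of that point: for $\gamma\in\Gamma_G$ and $x\in\cX_G^+$ with $\gamma^{-1}x\in\mathfrak{F}_G$, one has $H(\gamma)\le c(1+\|x\|_\infty)^n$. Apply this directly with $\gamma=\gamma_G$ and $x=\tilde{z}_G$ (the definition of $L_G(T)$ gives $\gamma_G^{-1}\tilde{z}_G\in\mathfrak{F}_G$), and note that Lemma~\ref{LemmaInftyNormPointInTheBase} already supplies $\|\tilde{z}_G\|_\infty\le c_0T$. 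That is the entire argument: no auxiliary $g$, no bound on $\gamma_G^{-1}g$, and no need to control $\|\tilde{f}\|_\infty$. The lesson is to apply the Siegel-type inequality to the pair $(\gamma_G,\tilde{z}_G)$ where you actually have both hypotheses ($\gamma_G\in\Gamma_G$ and a norm bound on $\tilde{z}_G$), rather than to $\gamma_G^{-1}g$, which lies in $G(\R)^+$ but not in $\Gamma_G$ and for which you have no a priori norm bound on the image of the basepoint.
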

\begin{proof}
For any $\gamma_G \in L_G(T)$, we have $\gamma_G^{-1}\tilde{z}_G \in \mathfrak{F}_G$ for some $\tilde{z}_G \in B^{\mathrm{horz}}(\tilde{z}_{0,G},T)$. 
Thus the result follows from Lemma~\ref{LemmaInftyNormPointInTheBase} and \cite[Lemma~5.5]{KlinglerThe-Hyperbolic-}.
\end{proof}

All volumes below are hyperbolic volumes.

\begin{lemma}\label{LemmaVolumeLowerBoundPurePart}
There exist two constants $c_3 >0$ and $c_4 > 0$ such that
\begin{equation*}\label{EqVolumeLowerBoundPurePart}
\mathrm{vol}(\tilde{\pi}(\tilde{Z}) \cap B^{\mathrm{horz}}(\tilde{z}_{0,G},T)) \ge c_3 T^{c_4 \dim \tilde{\pi}(\tilde{Z})}, \quad \forall T \gg 0.
\end{equation*}
\end{lemma}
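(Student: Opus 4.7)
The plan is to reduce this to a Hwang--To type volume lower bound for complex analytic subvarieties of a Hermitian symmetric domain equipped with its invariant K\"{a}hler metric, as used in analogous steps of \cite{KlinglerThe-Hyperbolic-} and \cite{MokAx-Schanuel-for}.

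First, since $\tilde{z}_{0,G}$ has been chosen in $\tilde{\pi}(\tilde{Z})$ and $\tilde{\pi}(\tilde{Z})$ is a closed irreducible complex analytic subset of $\cX_G^+$ of positive dimension, I would pick a pure-dimensional irreducible component of $\tilde{\pi}(\tilde{Z})$ through $\tilde{z}_{0,G}$ of dimension $d = \dim \tilde{\pi}(\tilde{Z})$. The ball $B^{\mathrm{horz}}(\tilde{z}_{0,G}, T)$ is, by the definition in \eqref{EqGeodesicBallPurePart}, the $d^{\mathrm{horz}}$-geodesic ball of radius $R = \log T$ centered at $\tilde{z}_{0,G}$ in $\cX_G^+$. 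So the target inequality can be rewritten as
\[
\mathrm{vol}\bigl( \tilde{\pi}(\tilde{Z}) \cap B^{\mathrm{horz}}(\tilde{z}_{0,G}, e^R) \bigr) \ge c_3\, e^{c_4 d R}, \qquad R \gg 0.
\]

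Next, I would invoke the Hwang--To inequality in its form for complex analytic subvarieties of a bounded symmetric domain, which states that for any closed complex analytic subvariety $W$ of $\cX_G^+$ of dimension $d$ passing through a point $z_0$, one has
\[
\mathrm{vol}\bigl( W \cap B^{\mathrm{horz}}(z_0, R) \bigr) \ge c(z_0)\, \sinh(R)^{2d}
\]
(or with $\sinh(R/2)^{2d}$; either suffices) for the hyperbolic volume of the metric induced on $W$. Applying this with $W = \tilde{\pi}(\tilde{Z})$ and $z_0 = \tilde{z}_{0,G}$, and using $\sinh(R) \ge \tfrac{1}{2} e^R - 1$ for $R \gg 0$, yields
\[
\mathrm{vol}\bigl(\tilde{\pi}(\tilde{Z}) \cap B^{\mathrm{horz}}(\tilde{z}_{0,G}, e^R)\bigr) \ge c_3 e^{c_4 d R},
\]
with constants $c_3, c_4 > 0$ depending only on $\cX_G^+$, $\tilde{\pi}(\tilde{Z})$ and $\tilde{z}_{0,G}$, as required.

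I expect the step that requires the most care is invoking a form of the Hwang--To estimate that applies uniformly on arbitrary Hermitian symmetric domains of non-compact type (the pure part $\cX_G^+$ here), not only on the complex unit ball. Fortunately this generalization is available in the literature and is exactly what is used in the pure Ax--Schanuel argument of \cite{MokAx-Schanuel-for} in the analogous counting step, so the lemma reduces to a citation once the setup is in place. No delicate geometry of $\tilde{Z}$ itself is needed; the proof treats $\tilde{\pi}(\tilde{Z})$ as a generic positive-dimensional complex analytic subvariety.
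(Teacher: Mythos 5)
Your proposal is correct and follows essentially the same route as the paper: the paper's entire proof of this lemma is simply the citation of \cite[Corollary~3]{HwangVolumes-of-comp} (Hwang--To), and your argument spells out that citation and the elementary conversion of $\sinh(R)^{2d}$ with $R=\log T$ into a power of $T$.
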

\begin{proof}
This is \cite[Corollary~3]{HwangVolumes-of-comp}.
\end{proof}

\begin{lemma}\label{LemmaVolumeUpperBoundPurePart}
There exists a constant $c_5 > 0$ such that 
\begin{equation*}\label{EqVolumeUpperBoundPurePart}
\mathrm{vol}(\tilde{\pi}(\tilde{Z}) \cap \gamma_G\mathfrak{F}_G) \le c_5
\end{equation*}
for all $\gamma_G \in \Gamma_G$. 
\end{lemma}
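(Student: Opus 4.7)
The strategy is to reduce the uniform bound in $\gamma_G$ to the finiteness of the hyperbolic volume of an algebraic subvariety of $M_G$, which has nothing to do with $\gamma_G$ at all.

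First, since the hyperbolic metric on $\cX_G^+$ is $\Gamma_G$-invariant and $\gamma_G$ acts as an isometry, one has
\[
\vol(\tilde{\pi}(\tilde{Z}) \cap \gamma_G \mathfrak{F}_G) \;=\; \vol(\gamma_G^{-1}\tilde{\pi}(\tilde{Z}) \cap \mathfrak{F}_G).
\]
Next, because $\mathfrak{F}_G$ is a fundamental set for the action of $\Gamma_G$ on $\cX_G^+$, the restriction $\bu_G|_{\mathfrak{F}_G}$ is generically injective, and $\bu_G$ is a local isometry with respect to the push-forward of the hyperbolic metric. Consequently there is a constant $C > 0$, depending only on $\mathfrak{F}_G$ (not on the analytic set in question), such that
\[
\vol(W \cap \mathfrak{F}_G) \;\le\; C \cdot \vol(\bu_G(W))
\]
for every complex analytic $W \subseteq \cX_G^+$. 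Applying this with $W = \gamma_G^{-1}\tilde{\pi}(\tilde{Z})$, and using the crucial observation that $\bu_G(\gamma_G^{-1}\tilde{\pi}(\tilde{Z})) = \bu_G(\tilde{\pi}(\tilde{Z}))$ is independent of $\gamma_G$, one obtains a uniform upper bound by $C \cdot \vol(\bu_G(\tilde{\pi}(\tilde{Z})))$.

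It remains to check that this last quantity is finite. Now $\bu_G(\tilde{\pi}(\tilde{Z}))$ is contained in its Zariski closure $Y$ inside $M_G$, which is an algebraic subvariety of the locally symmetric variety $M_G$. A classical result going back to Mumford's extension of Hirzebruch's proportionality theorem asserts that the K\"ahler--Einstein form on $M_G$ extends with at worst logarithmic growth to a smooth toroidal compactification, so that every algebraic subvariety of $M_G$ has finite hyperbolic volume. Taking $c_5 := C \cdot \vol(Y) < \infty$ then furnishes the desired bound.

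The main technical point is the volume comparison $\vol(W \cap \mathfrak{F}_G) \le C \cdot \vol(\bu_G(W))$, i.e.~the fact that $\bu_G|_{\mathfrak{F}_G}$ has bounded generic multiplicity as a map to $M_G$. For the choice of $\mathfrak{F}_G$ fixed in $\mathsection$\ref{SubsectionFundamentalSet}, which is built from (a finite number of translates of) a standard Siegel fundamental set, this multiplicity is controlled by the number of Siegel translates covering $\mathfrak{F}_G$, and depends only on $\Gamma_G$ and $\mathfrak{F}_G$, not on $W$. Everything else is formal.
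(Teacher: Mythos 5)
The proposal contains a genuine gap at its final and most important step, namely the claim that $\vol(\bu_G(\tilde{\pi}(\tilde{Z})))$ is finite.

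The first two reductions are fine: replacing $\vol(\tilde{\pi}(\tilde{Z}) \cap \gamma_G\mathfrak{F}_G)$ by $\vol(\gamma_G^{-1}\tilde{\pi}(\tilde{Z}) \cap \mathfrak{F}_G)$ using $\Gamma_G$-invariance, and observing that $\bu_G|_{\mathfrak{F}_G}$ has bounded multiplicity by the Siegel property, so that $\vol(W \cap \mathfrak{F}_G) \le C\,\vol(\bu_G(W))$. But then everything hinges on the finiteness of $\vol(\bu_G(\tilde{\pi}(\tilde{Z})))$, where ``$\vol$'' must mean the $d$-dimensional hyperbolic Hausdorff measure with $d = \dim_{\C}\tilde{\pi}(\tilde{Z})$. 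You bound this by $\vol(Y)$ for $Y$ the Zariski closure of $\bu_G(\tilde{\pi}(\tilde{Z}))$ in $M_G$, and invoke the Hirzebruch--Mumford proportionality argument. That argument, however, gives finiteness of the $(\dim Y)$-dimensional volume of the closed algebraic subvariety $Y$. In the situation of this lemma one typically has $d < \dim Y$ (this is the whole point of the transcendental setting; if $d = \dim Y$ the argument would degenerate to the bi-algebraic case), and the inclusion $\bu_G(\tilde{\pi}(\tilde{Z})) \subseteq Y$ then gives no bound at all on the $d$-dimensional measure. A dense immersed analytic curve inside a surface of finite volume can perfectly well have infinite length; the exponential image of $\{(z,z^2)\}$ in a compact complex $2$-torus is a standard example. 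In fact, one can check that $\vol(\bu_G(\tilde{\pi}(\tilde{Z})))$ is generically infinite here.

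The structural hypothesis on $\tilde{Z}$ (that $\tilde{\pi}(\tilde{Z})$ is a complex analytic irreducible component of $\tilde{\pi}(\tilde{X}) \cap \bu_G^{-1}(\pi(Y))$ with $\tilde{X}$ algebraic) is therefore not a nicety one can drop: it is the hypothesis the paper's proof leans on. The paper covers $\mathfrak{F}_G$ by finitely many semi-algebraic coordinate charts $\Sigma_i$, bounds each $\vol(\gamma_G^{-1}\tilde{\pi}(\tilde{Z}) \cap \mathfrak{F}_G \cap \Sigma_i)$ by the degree of a dominant coordinate projection times a fixed finite volume, and then observes that this degree, as a function of $g \in G(\R)^+$ applied to $\tilde{\pi}(\tilde{X})$ intersected with a fixed definable set, is a definable $\Z$-valued function and hence uniformly bounded. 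Your reduction sidesteps exactly the point where definability enters, so the bound collapses. If you want to pursue a route via the image in $M_G$, you would at minimum need to pass to a fixed definable fundamental set and invoke definable cell decomposition to control multiplicities uniformly in $\gamma_G$, which is essentially what Bakker--Tsimerman's Proposition~3.2 does.
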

\begin{proof}
This follows from \cite[Proposition~3.2]{BakkerThe-Ax-Schanuel}; our $\tilde{\pi}(\tilde{Z}) \cap \gamma_G\mathfrak{F}_G$ is the image of the $Z \cap \gamma \Phi$ in \textit{loc.cit.} under the projection $\cX^+_G \times M_G \rightarrow \cX^+_G$. As our notation is somewhat different from \textit{loc.cit.}, we briefly recall the proof. Denote by $\tilde{X} = \tilde{Z}^{\Zar}$ and $Y = \bu(\tilde{Z})^{\Zar}$. The assumption of Theorem~\ref{ThmEquivalentFormOfAS} ($\mathscr{Z}$ is a complex analytic irreducible component of $\mathscr{B} \cap \Delta$) implies that $\tilde{Z}$ is a complex analytic irreducible component of $\tilde{X} \cap \bu^{-1}(Y)$. Hence $\tilde{\pi}(\tilde{Z})$ is a complex analytic irreducible component of $\tilde{\pi}(\tilde{X}) \cap \bu_G^{-1}(\pi(Y))$, where $\bu_G \colon \cX^+_G \rightarrow M_G$ is the uniformizing map. It is possible to cover $\mathfrak{F}_G$ with finitely many semi-algebraic subsets $\{\Sigma_i\}$ such that each $\Sigma_i$ can be written in terms of (Siegel) coordinates; see \cite[Lemma~5.8]{KlinglerThe-Hyperbolic-} or \cite[Proposition~3.2]{BakkerThe-Ax-Schanuel}. For each dominant projection $p$ from $\gamma_G^{-1}\tilde{\pi}(\tilde{Z}) \cap \Sigma_i$ to $\dim \tilde{\pi}(\tilde{Z})$ coordinates, it can be computed that $p(\gamma_G^{-1}\tilde{\pi}(\tilde{Z}) \cap \mathfrak{F}_G \cap \Sigma_i)$ has finite volume. As the K\"{a}hler form with respect to which we compute the volume is $G^{\der}(\R)^+$-invariant, it suffices to bound the degree of the projections uniformly for $\gamma_G \in \Gamma_G$. But the function $G(\R)^+ \rightarrow \R$, $g^{-1}\tilde{\pi}(\tilde{X}) \cap (\bu_G^{-1}(\pi(Y)) \cap \mathfrak{F}_G) \cap \Sigma_i \mapsto \deg(p)$ is a definable function with value in $\Z$. Hence the image must be a finite set, meaning that the degree is uniformly bounded for $g \in G(\R)^+$, and in particular for $\gamma_G \in \Gamma_G$. Hence we are done.
\end{proof}

\section{Bigness of the $\Q$-stabilizer}\label{SectionQStabBig}

Define $H$ to be the $\Q$-stabilizer of $\mathscr{B}$, namely
\begin{equation}\label{EqDefnQStabilizerOfB}
H = \left( \bar{\Gamma \cap \mathrm{Stab}_{P(\R)^+}(\mathscr{B})}^{\Zar} \right)^{\circ}.
\end{equation}
The goal of this section is to execute the following d\'{e}vissage.
\begin{prop}\label{PropBignessOfQStabilizer}
Either Theorem~\ref{ThmEquivalentFormOfAS} is true, or $\dim H > 0$.
\end{prop}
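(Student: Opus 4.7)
The plan is the standard Pila--Zannier point-counting bootstrap, adapted to the mixed Shimura setting along the lines of Mok--Pila--Tsimerman \cite{MokAx-Schanuel-for}, with an additional input for the unipotent direction. I argue by contrapositive: assume
\[
\dim \mathscr{B} - \dim \mathscr{Z} < \dim \tilde{Z}^{\biZar},
\]
and produce a positive-dimensional algebraic subgroup inside $H$.

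First I would set up the definable counting problem. Fix a base point $\tilde{z}_0 \in \tilde{Z}$ with image $\tilde{z}_{0,G}=\tilde{\pi}(\tilde{z}_0)$, and for $T \gg 1$ consider
\[
L(T) = \{\, \gamma \in \Gamma : \gamma \mathfrak{F} \cap \mathscr{U}(T) \neq \emptyset \,\},
\]
where $\mathscr{U}(T)$ is an appropriate neighbourhood of $\tilde{z}_0$ enlarging $B^{\mathrm{horz}}(\tilde{z}_{0,G},T)$ by a suitable vertical window inside the fibres of $\tilde{\pi}$. By Lemma~\ref{LemmaGammaGUpperBound} and its analogue for the unipotent component (obtained from the real-algebraic form of $i_{\mathfrak{H}_g^+}$), every $\gamma \in L(T)$ satisfies $H(\gamma) \le c\, T^{c'}$ for constants independent of $T$.

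Next I would obtain a polynomial lower bound $|L(T)| \gtrsim T^{\mu}$. The horizontal contribution is immediate: dividing Lemma~\ref{LemmaVolumeLowerBoundPurePart} by Lemma~\ref{LemmaVolumeUpperBoundPurePart} produces at least $c\,T^{c_4 \dim \tilde{\pi}(\tilde{Z})}$ pure-part translates $\gamma_G \mathfrak{F}_G$ meeting $\tilde{\pi}(\tilde{Z}) \cap B^{\mathrm{horz}}(\tilde{z}_{0,G},T)$. Lifting this count to $\Gamma$ requires control of the vertical/unipotent direction, which the introduction flags as the crux. This splits into two regimes. If the vertical extent of $\tilde{Z}$ above the horizontal ball grows at most polynomially in $T$, then a fibrewise analogue of the pure volume bounds already suffices. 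If instead the vertical extent dominates, one exploits the fact that $V$ is a vector group: a ball of radius $R$ in $V(\R)$ contains $\gtrsim R^{\dim V}$ integer points, so the unipotent direction provides polynomially many lattice translates \emph{for free}. This is the variant of Tsimerman's polynomial-for-free argument in unipotent groups mentioned in the introduction.

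Finally I would feed the count into Pila--Wilkie. The auxiliary definable set is
\[
\Sigma = \{\, g \in P(\R) : g\mathscr{B} \cap \mathscr{B} \cap \Delta \text{ has an irreducible component of dimension} \ge \dim \mathscr{Z}\,\},
\]
which is definable in $\R_{\mathrm{an},\exp}$ thanks to the definability of $\bu|_{\mathfrak{F}}$ and the algebraicity of $\mathscr{B}$; by construction every $\gamma \in L(T)$ contributes an integer point of $\Sigma$ of height $\le cT^{c'}$. The failure of the Ax--Schanuel inequality makes $\mu$ exceed the Pila--Wilkie threshold, so $\Sigma$ contains a positive-dimensional connected semi-algebraic block through the identity. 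Its $\Q$-Zariski closure sits inside the stabilizer $H$ of \eqref{EqDefnQStabilizerOfB}, giving $\dim H > 0$.

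The main obstacle is the vertical estimate and the matching of exponents in the second step: one must verify that the two regimes exhaust all possible growth behaviours of $\tilde{Z}$ in the vertical versus horizontal directions, and that the exponents combine with Lemma~\ref{LemmaGammaGUpperBound} to yield $\mu$ strictly above the Pila--Wilkie threshold exactly when $\dim \mathscr{B} - \dim \mathscr{Z} < \dim \tilde{Z}^{\biZar}$. This is essentially the content of Theorem~\ref{ThmBignessOfQStab} flagged in the introduction, and is where the mixed case demands genuinely new input beyond the pure argument of \cite{MokAx-Schanuel-for}.
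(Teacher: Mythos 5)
Your point-counting setup is basically right---the horizontal/vertical dichotomy, the volume lower bound from Lemma~\ref{LemmaVolumeLowerBoundPurePart}, the height upper bound from Lemma~\ref{LemmaGammaGUpperBound}, and the ``polynomial for free in the unipotent direction'' idea together constitute Theorem~\ref{ThmBignessOfQStab}, and you have correctly identified this as the place where the mixed case needs new input. But your final sentence hides a genuine gap: a positive-dimensional semi-algebraic block $B$ sitting inside the definable set $\Theta$ (your $\Sigma$) does \emph{not} automatically lie in a coset of $\mathrm{Stab}_{P(\R)}(\mathscr{B})$. Membership in $\Theta$ only says that $p^{-1}\mathscr{B}$ still meets $\Delta$ in a component of the right dimension; it does not force $p^{-1}\mathscr{B}=\mathscr{B}$. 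So the step ``its $\Q$-Zariski closure sits inside the stabilizer $H$'' is exactly the thing that has to be proved, and it is not true without further work.

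The paper supplies this via a lexicographic induction on $(\dim\mathscr{B}-\dim\mathscr{Z},\dim\mathscr{Z})$ together with a three-way case analysis on a connected semi-algebraic curve $C\subseteq\Theta$: either (i) $c^{-1}\mathscr{B}$ is constant along $C$; or (ii) it varies but $\mathscr{Z}_{c_0}\subseteq c^{-1}\mathscr{B}$ for all $c$; or (iii) it varies and $\mathscr{Z}_{c_0}\not\subseteq (c')^{-1}\mathscr{B}$ for some $c'$. In cases (ii) and (iii) one does \emph{not} conclude anything about $H$; instead one produces a new pair $(\mathscr{B}_1,\mathscr{Z}_{c_0})$ with strictly smaller defect, or $(\mathscr{B}_2,\mathscr{Z}_2)$ with the same defect but larger $\dim\mathscr{Z}$, and closes the proof of Theorem~\ref{ThmEquivalentFormOfAS} directly by induction---this is precisely why the proposition is a genuine disjunction rather than the single contrapositive you set up. Only after observing that in the residual case every positive-dimensional block in $\Theta$ is case (i), hence contained in a coset of $\mathrm{Stab}_{P(\R)}(\mathscr{B})$ (this is Remark~\ref{RmkAssumptionASEnlarge}), can one feed in the blocks from Theorem~\ref{ThmBignessOfQStab}, let $T_i\to\infty$, and conclude $\dim H>0$. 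Without this descent your argument does not close.
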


Let us prove this proposition. 
Define
\begin{equation}\label{EquationXiSetsForFiberOfMSV}
\Theta=\{p\in P(\R): \dim(p^{-1}\mathscr{B}\cap  (\mathfrak{F} \times M) \cap \Delta)=\dim \mathscr{Z}\}. 
\end{equation}
Then $\Theta$ is a definable set.

We have
\begin{equation}\label{EquationIntersectionWithLatticeEqualASFiber}
\{\gamma\in\Gamma : \gamma (\mathfrak{F} \times M) \cap \mathscr{Z} \neq\emptyset\} \subseteq \Theta\cap\Gamma
\end{equation}
since $\Gamma \Delta = \Delta$. On the other hand, we have
\begin{align*}
\gamma (\mathfrak{F} \times M) \cap \mathscr{Z} & = (\gamma \cdot \mathrm{pr}_{\cX^+}^{-1}(\mathfrak{F})) \cap \mathscr{Z} \\
& = \mathrm{pr}_{\cX^+}^{-1}(\gamma\mathfrak{F}) \cap \mathscr{Z} \quad \text{ since }\mathrm{pr}_{\cX^+}\text{ is }P(\R)^+\text{-equivariant} \\
& = \mathrm{pr}_{\cX^+}^{-1}(\gamma\mathfrak{F} \cap \mathrm{pr}_{\cX^+}(\mathscr{Z}))  \\
& = \mathrm{pr}_{\cX^+}^{-1}(\gamma\mathfrak{F} \cap \tilde{Z}).
\end{align*}
Hence \eqref{EquationIntersectionWithLatticeEqualASFiber} becomes
\begin{equation}\label{EqIntersectingFundamentalDomainInTheta}
\{ \gamma \in \Gamma : \gamma \mathfrak{F} \cap \tilde{Z} \not= \emptyset \}  \subseteq \Theta \cap \Gamma.
\end{equation}

\begin{thm}\label{ThmBignessOfQStab}
Assume $\dim \tilde{Z} > 0$.
Then there exist a constant $\epsilon > 0$ and a sequence $\{T_i\}$ with $T_i \rightarrow \infty$ such that the following property holds: for each $T_i$ there exists a connected semi-algebraic block $B \subseteq \Theta$ which contains $\ge T_i^{\epsilon}$ points in $\Gamma$ with height at most $T_i$.\footnote{We refer to \cite[Definition~3.4 and the paragraph below]{PilaO-minimality-an} for the definition and basic properties of semi-algebraic blocks.}
\end{thm}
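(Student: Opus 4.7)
The plan is to produce, along an infinite sequence $T_i \to \infty$, at least $T_i^{\epsilon'}$ elements of $\Gamma$ of height $\le T_i$ lying inside $\Theta$, for some fixed $\epsilon' > 0$. The theorem will then follow from the block form of the Pila--Wilkie counting theorem applied to the definable set $\Theta$: shrinking the exponent slightly forces one connected semi-algebraic block $B \subseteq \Theta$ to contain $\ge T_i^{\epsilon}$ of these lattice points for some $\epsilon > 0$. By \eqref{EqIntersectingFundamentalDomainInTheta} it suffices to bound from below the number of $\gamma \in \Gamma$ with $H(\gamma) \le T_i$ satisfying $\gamma\mathfrak{F} \cap \tilde Z \neq \emptyset$.

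First I would fix a Levi decomposition $P = V \rtimes G$ compatible with the Shimura embedding and write each $\gamma \in \Gamma$ as $(v,\gamma_G)$ with $v \in V(\Z)$ and $\gamma_G \in \Gamma_G$. Comparing the horizontal growth of $\tilde Z$ (its image under $\tilde\pi$) with its vertical growth (inside the fibers of $\tilde\pi$), one of two cases must recur for infinitely many $T_i$. In the \emph{horizontal case}---when $\dim \tilde\pi(\tilde Z) > 0$ and the vertical extent of $\tilde Z$ over $B^{\mathrm{horz}}(\tilde z_{0,G}, T_i)$ is bounded polynomially in $T_i$---Lemmas~\ref{LemmaVolumeLowerBoundPurePart} and~\ref{LemmaVolumeUpperBoundPurePart} combine to produce at least $(c_3/c_5) T_i^{c_4 \dim \tilde\pi(\tilde Z)}$ distinct elements $\gamma_G \in L_G(T_i)$, all of which satisfy $H(\gamma_G) \le c_1 T_i^{c_2}$ by Lemma~\ref{LemmaGammaGUpperBound}. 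One then lifts each such $\gamma_G$ to a full element $(v,\gamma_G) \in \Gamma$ using a $v \in V(\Z)$ of polynomially controlled height, which is available precisely because of the polynomial vertical bound.

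In the \emph{vertical case}, where the vertical growth of $\tilde Z$ outpaces any polynomial in the horizontal growth, I would invoke a variant of Tsimerman's \emph{polynomial-for-free in unipotent groups} observation from \cite{TsimermanAx-Schanuel-and}. The key point is that the action of $V$ on $\cX^+$ is linear in the Lagrangian coordinates of \eqref{EqComplexStrOfX2g} and that $\Gamma \cap V$ is a full-rank lattice in $V(\R)$. Consequently a suitable algebraic slice of $\tilde Z$ sitting essentially in a single fiber of $\tilde\pi$ (or in a fiber family whose base has polynomially bounded horizontal extent) automatically admits $\gtrsim T_i^{c}$ integer translates by elements of $V(\Z)$ of height $\le T_i$ whose images still meet the fundamental domain $\mathfrak{F}$, yielding the required polynomial count.

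The main obstacle is precisely the vertical case. In the pure Ax--Schanuel theorem of \cite{MokAx-Schanuel-for}, the hyperbolic volume bounds of Lemmas~\ref{LemmaVolumeLowerBoundPurePart}--\ref{LemmaVolumeUpperBoundPurePart} suffice; in the mixed setting, however, the unipotent direction may grow arbitrarily fast relative to the horizontal direction, and no analogous volume bound is available. One must instead exploit the linearity of the $V$-action, the discreteness of $V(\Z)$, and the semi-algebraic nature of the fundamental set to count integral translates directly rather than via volume comparison. Once the polynomial count is established in either case, the Pila--Wilkie block theorem produces the required block $B \subseteq \Theta$ and finishes the proof.
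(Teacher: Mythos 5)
Your proposal follows the paper's strategy closely: decompose $\Gamma = V(\Z)\rtimes\Gamma_G$ via a Levi decomposition, count the horizontal contribution through the hyperbolic volume bounds (Lemmas~\ref{LemmaVolumeLowerBoundPurePart}--\ref{LemmaVolumeUpperBoundPurePart}) and the height bound (Lemma~\ref{LemmaGammaGUpperBound}), count the vertical contribution via Tsimerman's polynomial-for-free trick in the Euclidean fiber, dichotomize on whether the vertical extent of $\tilde Z$ over $B^{\mathrm{horz}}(T)$ grows polynomially or not, and finish with the block form of Pila--Wilkie. Two details you gloss over are worth flagging. First, the paper's dichotomy is cleaner than yours: it fixes a single threshold $\delta > c_2$ and splits on whether $\|p_V(\tilde Z(T_i))\|_\infty \le T_i^\delta$ along some sequence $T_i\to\infty$ versus $\|p_V(\tilde Z(T))\|_\infty > T^\delta$ for all $T\gg 0$; your ``polynomially bounded vs.\ outpaces any polynomial'' dichotomy works, but in the vertical case you must then relate the supra-polynomial vertical radius to the height-$T^\delta$ lattice count, for which the connectivity of $\tilde Z(T)$ and the bounded diameter of $p_V(\gamma\mathfrak{F})$ (at scale $H(\gamma_G)$) are the precise mechanism. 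Second, you do not address the edge case $\dim\tilde\pi(\tilde Z) = 0$, where there is no horizontal growth at all; the paper handles this separately by showing $\{\gamma:\gamma\mathfrak{F}\cap\tilde Z\neq\emptyset\}$ is infinite via Peterzil--Starchenko o-minimal Chow together with the infinitude of the monodromy of a positive-dimensional subvariety of an abelian variety, before applying the Euclidean counting. Your ``vertical case'' covers this in spirit (a slice of $\tilde Z$ in a single fiber), but you would still need some argument that the slice is unbounded in the fiber before the integer-translate count gets off the ground.
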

\begin{proof}
Use the notation of \eqref{DiagramKugaTypeToPure}. A typical case to keep in mind is $(P,\cX^+) = (P_{2g,D,\mathrm{a}},\cX^+_{2g,\mathrm{a}})$ and $(G,\cX^+_G) = (\GSp_{2g,D},\mathfrak{H}_g^+)$. The map $\tilde{\pi} \colon (P,\cX^+) \rightarrow (G,\cX^+_G)$ is the natural projection. Denote by $\tilde{X} = \tilde{Z}^{\Zar}$ and $Y= \bu(\tilde{Z})^{\Zar}$. Then the assumption on $\mathscr{Z}$ implies that $\tilde{Z}$ is a complex analytic irreducible component of $\tilde{X} \cap \bu^{-1}(Y)$.

\noindent\boxed{\text{Case }\dim \tilde{\pi}(\tilde{Z}) = 0} In this case $\tilde{Z}$ is contained in a fiber of $\cX^+ \rightarrow \cX^+_G$. Consider $\{ \gamma \in \Gamma : \gamma \mathfrak{F} \cap \tilde{Z} \not= \emptyset \}$. We claim that it is infinite. Assume it is not, then $\bu(\tilde{Z}) = \bigcup_{\gamma \in \Gamma} \bu(\tilde{Z} \cap \gamma\mathfrak{F})$ 
is a finite union, with each member in the union being closed, complex analytic and definable (in $\R_{\mathrm{an},\exp}$) in $M$. Hence $\bu(\tilde{Z})$ is closed complex analytic and definable in $M$. Hence $\bu(\tilde{Z})$ is algebraic by Peterzil-Starchenko's o-minimal Chow \cite[Theorem~4.5]{PeterzilComplex-analyti}; see also \cite[Theorem~2.2]{MokAx-Schanuel-for}, and so $\bu(\tilde{Z}) = Y$. The monodromy group of $Y$, denoted by $\Gamma_Y$, is infinite since $Y$ is a positive dimensional subvariety of an abelian variety. But $\Gamma_Y \subseteq \{ \gamma \in \Gamma : \gamma \mathfrak{F} \cap \tilde{Z} \not= \emptyset \}$. This settles the claim.

Now that $\{ \gamma \in \Gamma : \gamma \mathfrak{F} \cap \tilde{Z} \not= \emptyset \}$ is infinite, we get that it contains $\ge T$ elements of height at most $T$ (for all $T \gg 0$) because each fundamental set in the fiber of $\cX^+ \rightarrow \cX^+_G$ is contained in an Euclidean ball of a fixed radius and that $\tilde{Z}$ is connected.\footnote{The crucial point is that the group $V(\R)$ is a Euclidean space. See \cite[pp.~3]{TsimermanAx-Schanuel-and}.} Hence we can conclude the theorem by Pila-Wilkie \cite[Theorem~3.6]{PilaO-minimality-an}.

\noindent\boxed{\text{Case }\dim \tilde{\pi}(\tilde{Z}) > 0} Fix $\tilde{z}_0 \in \tilde{Z} \cap \mathfrak{F} \subseteq \cX^+$, and denote by $\tilde{z}_{0,G} = \tilde{\pi}(\tilde{z}_0)$. Consider the geodesic balls $B^{\mathrm{horz}}(\tilde{z}_{0,G}, T)$ in $\cX^+_G$ defined by \eqref{EqGeodesicBallPurePart}, which for simplicity we denote by $B^{\mathrm{horz}}(T)$. 

For each $T > 0$, let $\tilde{Z}(T)$ denote the complex analytic irreducible component of $\tilde{Z} \cap \tilde{\pi}^{-1}(B^{\mathrm{horz}}(T)) $ which contains $\tilde{z}_0$. Define the following sets:
\[
\begin{array}{c}
\Xi(T) = \{p \in P(\R)  : p \mathfrak{F} \cap \tilde{Z}(T) \not= \emptyset\}, \\
\Xi_G(T) = \{g \in G(\R): g \mathfrak{F}_G \cap \tilde{\pi}(\tilde{Z}(T))  \not= \emptyset \}.
\end{array}
\]

In the rest of the proof we prove that $\#(\Xi(T) \cap \Gamma)$ grows polynomially in terms of $T$. It is divided into two steps: first we treat the base, and then we insert the information on the vertical direction.

\noindent\boxed{\text{Step~I}} Count on the base.

It is clear that
\[
\tilde{\pi}(\tilde{Z}(T)) = \bigcup_{\gamma_G \in \Xi_G(T) \cap \Gamma_G} (\gamma_G \mathfrak{F}_G \cap \tilde{\pi}(\tilde{Z}(T))).
\]
Thus by taking volumes on both sides, we get by Lemma~\ref{LemmaVolumeLowerBoundPurePart}\footnote{When $T \rightarrow \infty$, we have that $\mathrm{vol}(\tilde{\pi}(\tilde{Z}(T)))$ approximates $\mathrm{vol}(\tilde{\pi}(\tilde{Z}) \cap B^{\mathrm{horz}}(T))$  since $\tilde{Z}$ is irreducible. Thus they have the same volume lower bounds when $T \gg 0$.} and Lemma~\ref{LemmaVolumeUpperBoundPurePart} that
\begin{equation}\label{EqManyPointsInXiG}
\# (\Xi_G(T) \cap \Gamma_G) \ge c T^{\epsilon}, \quad \forall T \gg 0
\end{equation}
for some constants $c > 0$ and $\epsilon > 0$ independent of $T$.

On the other hand for each $\gamma_G \in \Xi_G(T) \cap \Gamma_G$, we have $\gamma_G \in L_G(T)$ for the set $L_G(T)$ defined in \eqref{EqGammaGIntersectionNonEmpty}. Thus Lemma~\ref{LemmaGammaGUpperBound} implies $H(\gamma_G) \le c_1 T^{c_2}$ for some constants $c_1, c_2 > 0$ independent of $T$.

\noindent\boxed{\text{Step~II}} Study the vertical direction.

Consider the projection to the $V$-direction
\[
p_V \colon \cX^+ \cong V(\R) \times \cX^+_G \rightarrow V(\R).
\]
By abuse of notation we shall identify $\cX^+$ and $V(\R) \times \cX^+_G$.

For any set $E \subseteq V(\R)$, denote by $||E||_{\infty} = \max\{||e||_{\infty} : e \in E\}$.


Fix a number $\delta > c_2$. We are in one of the following two cases.
\begin{enumerate}
\item[(i)] For a sequence $\{T_i \in \R \}_{i \in \N}$ such that $T_i \rightarrow \infty$, we have $||p_V(\tilde{Z}(T_i) )||_{\infty} \le T_i^{\delta}$.
\item[(ii)] We have $|| p_V( \tilde{Z}(T) ) ||_{\infty} > T^{\delta}$ for all $T \gg 0$.
\end{enumerate}

Assume we are in case (i). We claim that for each $\gamma_G \in \Xi_G(T_i) \cap \Gamma_G$, there exists $\gamma_V \in \Gamma_V$ with $H(\gamma_V) \le T_i^{\delta}$ such that $(\gamma_V,\gamma_G) \in \Xi(T_i) \cap \Gamma$. Indeed, let $\gamma_G \in \Xi_G(T_i) \cap \Gamma_G$ and choose a point $\tilde{z}_G \in \gamma_G\mathfrak{F}_G \cap \tilde{\pi}(\tilde{Z}(T_i))$. Take $\tilde{z} \in \tilde{Z}(T_i)$ such that $\tilde{\pi}(\tilde{z}) = \tilde{z}_G$. Write $\tilde{z} = (\tilde{z}_V , \tilde{z}_G) \in V(\R) \times \cX^+_G \cong \cX^+$, then by assumption on case (i) we have $||\tilde{z}_V ||_{\infty} \le T_i^{\delta}$. Hence $\tilde{z}_V \in \gamma_V + (-1,1)^{2g}$ with some $\gamma_V \in \Gamma_V$ such that $H(\gamma_V) \le ||\tilde{z}_V ||_{\infty} \le T_i^{\delta}$. Recall that by choice of $\mathfrak{F}$ we have $(-1,1)^{2g} \times \mathfrak{F}_G \subseteq \mathfrak{F}$.  This $\gamma_V$ is what we desire.

Now that $\#(\Xi_G(T_i) \cap \Gamma_G) \ge c T_i^{\epsilon}$ by \eqref{EqManyPointsInXiG} and $H(\gamma_G) \le c_1 T_i^{c_2}$ for each $\gamma_G \in \Xi_G(T_i) \cap \Gamma_G$ (see below \eqref{EqManyPointsInXiG}), the paragraph above yields
\[
\#\{\gamma \in \Xi(T_i) \cap \Gamma : H(\gamma) \le T_i \} \ge c T_i^{\epsilon}
\]
where $c$ and $\epsilon$ are modified appropriately (but still independent of $T_i$). By definition we have $\Xi(T_i) \cap \Gamma \subseteq \{\gamma \in \Gamma : \gamma \mathfrak{F} \cap \tilde{Z} \not= \emptyset\}$, and hence by \eqref{EqIntersectingFundamentalDomainInTheta} we have
\[
\#\{\gamma \in \Theta \cap \Gamma : H(\gamma) \le T_i \} \ge c T_i^{\epsilon}.
\]
Now Pila-Wilkie \cite[Theorem~3.6]{PilaO-minimality-an} yields the conclusion for case (i).

Assume we are in case (ii). Recall that by the choice by $\mathfrak{F}$ we have $\mathfrak{F} \subseteq (-k,k)^{2g} \times \mathfrak{F}_G$ for some fixed integer $k$. For each $\gamma = (\gamma_V, \gamma_G) \in \Gamma$ such that $\gamma_G \mathfrak{F}_G \cap B^{\mathrm{horz}}(T) \not= \emptyset$, we have $H(\gamma_G) \le c_1 T^{c_2}$ by Lemma~\ref{LemmaGammaGUpperBound}. Now for each $\tilde{x} = (\tilde{x}_V, \tilde{x}_G) \in \gamma \mathfrak{F}$, we have
\[
\tilde{x}_V \in \gamma_G \cdot (-k,k)^{2g} + \gamma_V \subseteq (-k c_1 T^{c_2}, k c_1 T^{c_2})^{2g} + \gamma_V.
\]
Thus if $\gamma = (\gamma_V,\gamma_G) \in \Gamma$ satisfies $\gamma\mathfrak{F} \cap \tilde{Z}(T) \not=\emptyset$, then we have
\[
p_V(\gamma\mathfrak{F} \cap \tilde{Z}(T)) \subseteq (-k c_1 T^{c_2}, k c_1 T^{c_2})^{2g} + \gamma_V \subseteq B_{\mathrm{Eucl}}(\sqrt{2g}kc_1T^{c_2}) + \gamma_V,
\]
where $B_{\mathrm{Eucl}}(\sqrt{2g}kc_1T^{c_2})$ is the Euclidean ball in $V(\R)$ centered at $0$ of radius $\sqrt{2g}kc_1T^{c_2}$.



On the other hand consider the Euclidean ball in $V(\R)$ centered at $0$ of radius $T^{\delta}$ for all $T \gg 0$. The assumption of case (ii) says that $p_V(\tilde{Z}(T))$ reaches the boundary of this ball.

Since $\tilde{Z}(T)$ is connected and $V(\R)$ is Euclidean, the two paragraphs above then imply
\begin{equation}\label{EqGammaVPolyGrowth}
\#\{\gamma_V \in \Gamma_V : H(\gamma_V) \le T^{\delta},~(\gamma_V,\gamma_G) \in \Xi(T)\text{ for some }\gamma_G \in \Xi_G(T) \cap \Gamma_G\} \ge \frac{1}{\sqrt{2g}k c_1} T^{\delta - c_2}
\end{equation}
for $T \gg 0$. On the other hand each $\gamma_G \in \Xi_G(T) \cap \Gamma_G$ satisfies $H(\gamma_G) \le c_1 T^{c_2}$. Hence \eqref{EqGammaVPolyGrowth} yields
\[
\#\{(\gamma_V, \gamma_G) \in \Xi(T) \cap \Gamma : H(\gamma_V) \le T^{\delta}, H(\gamma_G) \le c_1 T^{c_2} \} \ge \frac{1}{\sqrt{2g}k c_1} T^{\delta - c_2}.
\]
Recall that the only assumption on $\delta$ is that $\delta > c_2$, so $\delta$ is independent of $T$. Hence we have
\[
\#\{\gamma \in \Xi(T) \cap \Gamma : H(\gamma) \le T \} \ge c' T^{\epsilon}
\]
for some $c' , \epsilon > 0$ independent of $T$. 
Hence by Pila-Wilkie \cite[Theorem~3.6]{PilaO-minimality-an} we are done.
\end{proof}

Now we are ready to prove the bigness of the $\Q$-stabilizer of $\mathscr{B}$.

\begin{proof}[Proof of Proposition~\ref{PropBignessOfQStabilizer}]
Denote by $F = \bu(\tilde{Z})^{\mathrm{biZar}}$ and $\tilde{F} = \tilde{Z}^{\mathrm{biZar}}$. Then $F$ is weakly special by \cite[Theorem~8.1]{GaoTowards-the-And}. As $\tilde{Z}$ is by assumption Hodge generic in $\cX^+$, we have $\tilde{F} = N(\R)^+\tilde{x}$ for some $N \lhd P$ and some $\tilde{x} \in \cX^+$. See $\mathsection$\ref{SubsectionWeaklySpecial}. Denote by $\Gamma_N = N(\Q) \cap \Gamma$. Without loss of generality we may assume $\dim \tilde{Z} > 0$; otherwise Theorem~\ref{ThmEquivalentFormOfAS} is clearly true.

We do the lexicographic induction on $(\dim\mathscr{B}-\dim\mathscr{Z},\dim\mathscr{Z})$, upwards for the first factor and downwards for the second.

The starting point of the induction on the first factor is when $\dim\mathscr{B}-\dim\mathscr{Z}=0$. Then $\mathscr{B}=\mathscr{Z}$ and hence both $\tilde{Z}$ and $\bu(\tilde{Z})$ are algebraic. Therefore $\tilde{Z} = \tilde{F}$ and $\bu(\tilde{Z}) = F$. Thus $\mathscr{Z} = \mathrm{graph}(\tilde{F} \rightarrow F)$. In particular $\mathscr{Z}$ is $\Gamma_N$-invariant, namely $\Gamma_N\cdot \mathscr{Z} =\mathscr{Z}$. But the action of $P(\R)^+$ on $\cX^+ \times M$ is algebraic. So taking Zariski closures on both sides, we get $N(\R)^+\mathscr{B} = \mathscr{B}$. But then $\mathscr{B} = \tilde{F} \times F$, and hence $\dim \mathscr{B} - \dim \mathscr{Z} = \dim F$. So we must have $\dim F = 0$. Hence this case is proven.

Let us prove the starting point of the induction on the second factor. Recall that $\mathscr{Z} \subseteq \mathrm{graph}(\tilde{F} \rightarrow F)$. So the starting point of the induction on the second factor is when $\mathscr{Z} = \mathrm{graph}(\tilde{F} \rightarrow F)$. Hence $\mathscr{Z}$ is $\Gamma_N$-invariant, namely $\Gamma_N\cdot \mathscr{Z} =\mathscr{Z}$. But the action of $P(\R)^+$ on $\cX^+ \times M$ is algebraic. So taking Zariski closures on both sides, we get $N(\R)^+\mathscr{B} = \mathscr{B}$. But then $\mathscr{B} = \tilde{F} \times F$, and hence $\dim \mathscr{B} - \dim \mathscr{Z} = \dim F$. Hence we are done for this case.

Now we do the induction. Let $C$ be a connected semi-algebraic curve in $\Theta$. For each $c$ in an open neighborhood of $C$, let $\mathscr{Z}_c$ be a complex analytic irreducible component of $c^{-1}\mathscr{B}\cap \Delta$ such that $\dim\mathscr{Z}_c=\dim\mathscr{Z}$
. Let $c_0\in C$ be such that $\mathrm{pr}_{\cX^+}(\mathscr{Z}_{c_0})^{\mathrm{biZar}} = \tilde{F}$,\footnote{This holds for all but countably many points in $C$.} then there are 3 possibilities:
\begin{enumerate}
\item[(i)] $c^{-1}\mathscr{B}$ is independent of $c\in C$;
\item[(ii)] $c^{-1}\mathscr{B}$ is not independent of $c\in C$ but $\mathscr{Z}_{c_0}\subseteq c^{-1}\mathscr{B}$ for all $c\in C$;
\item[(iii)] $c^{-1}\mathscr{B}$ is not independent of $c\in C$ and $\mathscr{Z}_{c_0}\not\subseteq (c^\prime)^{-1}\mathscr{B}$ for some $c^\prime\in C$.
\end{enumerate}
If we are in case (ii), then let $\mathscr{B}_1:=\mathscr{B}\cap c^{-1}\mathscr{B}$ for a generic $c\in C$. Then $\dim\mathscr{B}_1<\dim\mathscr{B}$. But $\dim\mathscr{Z}_{c_0}=\dim\mathscr{Z}$. Applying the inductive hypothesis on $\dim\mathscr{B}-\dim\mathscr{Z}$ to $(\mathscr{B}_1,\mathscr{Z}_{c_0})$, we have $\dim\mathscr{B}_1-\dim\mathscr{Z}_{c_0}\ge \dim F$. Hence the Theorem~\ref{ThmEquivalentFormOfAS} is true. If we are in case (iii), then let $\mathscr{B}_2:=(C^{-1}\mathscr{B})^{\Zar}$ and let $\mathscr{Z}_2$ be a complex analytic irreducible component of $\mathscr{B}_2\cap\Delta$. Then $\dim\mathscr{B}_2=\dim\mathscr{B}+1$\footnote{The action of $P(\R)^+$ on $\cX^+ \times M$ extends to an action of $P(\C)$ on $\cX^\vee \times M$. Thus $(C^{-1})^{\Zar}\mathscr{B}$, being the image of $(C^{-1})^{\Zar} \times \mathscr{B}$ under $P(\C) \times (\cX^\vee \times M) \rightarrow \cX^\vee \times M$, has dimension at most $\dim \mathscr{B} + 1$. As $C^{-1}\mathscr{B} \subseteq (C^{-1})^{\Zar}B$ we have $\dim (C^{-1}\mathscr{B})^{\Zar} \le \dim (C^{-1})^{\Zar}B \le \dim \mathscr{B} + 1$. As $C^{-1}\mathscr{B} \subseteq (C^{-1})^{\Zar}B$ we have $\dim (C^{-1}\mathscr{B})^{\Zar} > \dim \mathscr{B}$ by the assumption of (iii).}, and hence $\dim\mathscr{Z}_2=\dim\mathscr{Z}+1$ since $\mathscr{Z}_2$ contains $\cup_{c\in C}\mathscr{Z}_c$. Hence $\dim\mathscr{B}_2-\dim\mathscr{Z}_2=\dim\mathscr{B}-\dim\mathscr{Z}$, and we can apply the inductive hypothesis on $\dim\mathscr{Z}$ to $(\mathscr{B}_2,\mathscr{Z}_2)$ and get $\dim\mathscr{B}_2-\dim\mathscr{Z}_2\ge \dim F$. Hence Theorem~\ref{ThmEquivalentFormOfAS} is true.

It remains to treat case (i). In particular we may take $C$ to be a semi-algebraic curve contained in the semi-algebraic block $B$ as in Theorem~\ref{ThmBignessOfQStab}; note that $B$ depends on the chosen $T_i$ in Theorem~\ref{ThmBignessOfQStab}. It is known that $B$ is the union of all such $C$'s; see \cite[below Definition~3.4]{PilaO-minimality-an}. The assumption on case (i) implies that each $C$ is contained in some left coset of $\stab_{P(\R)}(\mathscr{B})$. Since $B$ is path-connected, it is possible to connect any two points of $B$ by a semi-algebraic curve. So all these $C$'s are contained in the same left coset of $\stab_{P(\R)}(\mathscr{B})$. Thus $B \subseteq p \stab_{P(\R)}(\mathscr{B})$ for some $p \in P(\R)$. In particular $b \in p\stab_{P(\R)}(\mathscr{B})$ for each $b \in B$. Since left cosets are disjoints, we then have $b\stab_{P(\R)}(\mathscr{B}) = p\stab_{P(\R)}(\mathscr{B})$ for each $b \in B$.

So $B \subseteq b \stab_{P(\R)}(\mathscr{B})$ for each $b \in B$. In particular $\gamma^{-1}B \subseteq \stab_{P(\R)}(\mathscr{B})$ for some $\gamma \in B \cap \Gamma$. By letting $T_i \rightarrow \infty$ and varying $B$ accordingly, we get $\dim H > 0$.
\end{proof}

\begin{rmk}\label{RmkAssumptionASEnlarge}
As a byproduct, the proof above yields the following claim: In order to prove the Ax-Schanuel theorem (equivalently Theorem~\ref{ThmEquivalentFormOfAS}), we may assume that every connected semi-algebraic block $B$ of positive dimension in $\Theta$ is contained in a left coset of $\stab_{P(\R)}(\mathscr{B})$.
\end{rmk}

\section{Normality of the $\Q$-stabilizer}\label{SectionQStabNormal}
The goal of this section is to prove the following proposition. Use the notation of $\mathsection$\ref{SectionQStabBig}.
\begin{prop}\label{PropNormOfQStab}
Without loss of generality we may assume $H \lhd P$.
\end{prop}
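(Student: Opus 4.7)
My plan for proving Proposition~\ref{PropNormOfQStab} is to follow the template of Mok--Pila--Tsimerman \cite{MokAx-Schanuel-for}, with a new ingredient for the mixed case. First, I would introduce the algebraic family associated with $\mathscr{B}$: extending the action of $P(\R)^+$ on $\cX^+ \times M$ to an action of $P(\C)$ on $\cX^\vee \times M$, the orbit $P \cdot \mathscr{B}$ sits inside an algebraic family of dimension $\dim P - \dim H_P$, where $H_P = \mathrm{Stab}_{P(\C)}(\mathscr{B})$ is an algebraic subgroup whose neutral component contains $H$. Normality of $H$ in $P$ is equivalent to $H_P$ being normal, so the strategy is to modify $\mathscr{B}$ within this family in such a way that, possibly after replacing $\mathscr{Z}$ by a complex analytic irreducible component of the new intersection with $\Delta$, the stabilizer becomes normal without hurting the quantities $\dim\mathscr{B}-\dim\mathscr{Z}$ and $\dim \tilde{Z}^{\biZar}$ that appear in Theorem~\ref{ThmEquivalentFormOfAS}.

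Next, I would apply a monodromy argument. Setting $\tilde{F} = \tilde{Z}^{\biZar} = N(\R)^+\tilde{x}$, consider the monodromy of $\tilde{Z}$ inside the bi-algebraic subvariety $\bu(\tilde{F})$. By the theorem of Deligne--Andr\'{e}, the algebraic monodromy group of the ambient variation is a normal subgroup of the derived Mumford--Tate group of $N$. Since $\Gamma \cap \mathrm{Stab}_{P(\R)^+}(\mathscr{B})$ normalizes $H$ by the construction of the $\Q$-stabilizer, combining with the monodromy action on $\mathscr{Z}$ and passing to Zariski closures shows that the image $\bar{H} = \tilde{\pi}(H)$ of $H$ in $G = P/V$ is normalized by a normal subgroup of $N^{\der}$. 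The subtlety in the mixed case, flagged in the introduction, is that the monodromy group and the Mumford--Tate group are not interchangeable at the level of $P$, and one must track the behaviour of $H \cap V$ and of $\bar{H}$ separately.

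The main obstacle, and the new contribution of $\mathsection$\ref{SubsectionLastStepTowardsNormality}, is to upgrade normality of $\bar{H}$ in (a suitable subgroup of) $G$ to normality of $H$ itself in $P$. In the pure case this step is empty, since a normal subgroup of the reductive $G$ induces a decomposition of $(G,\cX_G^+)$; but for $P = V \rtimes G$ a normal subgroup $N_0 \lhd G$ generally fails to give $V \rtimes N_0 \lhd P$, because conjugation by elements of $V$ introduces commutators in $V$ that $N_0$ may not absorb. The plan here is to pass to the smallest normal $\Q$-subgroup $H^\natural \lhd P$ whose image in $G$ contains $\bar{H}$, replace $\mathscr{B}$ by the Zariski closure $(H^\natural(\R)^+ \cdot \mathscr{B})^{\Zar}$ and $\mathscr{Z}$ by an appropriate irreducible component of its intersection with $\Delta$, and then run the lexicographic induction on $(\dim\mathscr{B}-\dim\mathscr{Z}, \dim\mathscr{Z})$ already set up in the proof of Proposition~\ref{PropBignessOfQStabilizer}. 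Either the induction hypothesis applies to the enlargement and gives the desired inequality directly, or the enlargement is trivial, meaning $H$ was already normal. The hard part will be the dimension bookkeeping during this enlargement: I expect to need the ``polynomial-for-free'' estimate on unipotent groups used in the proof of Theorem~\ref{ThmBignessOfQStab}, to ensure that sweeping $\mathscr{B}$ by $H^\natural$ does not inflate $\dim\mathscr{B}-\dim\mathscr{Z}$ beyond what the induction can absorb, and to verify compatibility with the remark after Proposition~\ref{PropBignessOfQStabilizer} concerning semi-algebraic blocks lying in cosets of $\mathrm{Stab}_{P(\R)}(\mathscr{B})$.
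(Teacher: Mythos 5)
Your first two paragraphs track the paper reasonably well: $\mathsection$\ref{SubsectionAlgFamilyAssoWithB} does build an algebraic family parametrizing translates of $\mathscr{B}$ (though via a Hilbert scheme $\bH$, which gives you properness and $\Gamma$-equivariance for free, rather than via the $P(\C)$-orbit of $\mathscr{B}$, and the upward induction on $\dim\mathscr{B}$ is used precisely to reduce to $\mathscr{B}$ a very general fiber of that family); and $\mathsection$\ref{SubsectionFirstStepTowardsNormality} does use monodromy together with Andr\'{e}'s theorem to get $N \lhd P$ (Lemma~\ref{LemmaNnormalinP}) and the Hilbert-scheme stabilizer argument to get $H \lhd N$ (Lemma~\ref{LemmaHnormalinN}). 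But your third paragraph misses the actual content of $\mathsection$\ref{SubsectionLastStepTowardsNormality}, and the plan you substitute for it does not work.

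You correctly identify that the mixed-case obstacle is that a normal $\Q$-subgroup of $G=P/V$ need not lift to a normal subgroup of $P = V\rtimes G$. But the paper's resolution is not to pass to a larger normal group $H^\natural$ and rerun the induction. In fact, once $H \lhd N \lhd P$ is known, the image $G_H = \tilde\pi(H)$ is automatically normal in $G$ (two-step normality in a reductive group), so that part is free. The whole of $\mathsection$\ref{SubsectionLastStepTowardsNormality} is devoted to the two remaining facts: that $V_H = V\cap H$ is a $G$-submodule of $V$, and that $G_H$ acts trivially on $V/V_H$. The second is an easy consequence of $H \lhd N \lhd P$. The first is the genuinely hard new step, and your proposal gives no mechanism for it. The paper's argument (Steps I--III) analyzes the complex structure $J_{\tilde x_G}$ on $V(\R)$ coming from each $\tilde x_G$ on the pure part of $\tilde F$, shows $J_{\tilde x_G}V_H(\R)$ is constant along $G_N(\R)^+\tilde z_G$, then runs the polynomial-for-free Pila--Wilkie count to prove that the vertical monodromy of a suitable $\tilde Z'$ lands inside $\Gamma_{V_H}$ (so $\bar\bu(\tilde Z')$ is closed), and finally uses that monodromy to deduce $\tilde X - (\{v\}\times\tilde X_G) \subseteq (V_H(\R)\cap J_{\tilde x_G}V_H(\R))\times\tilde F_G$, forcing $V_H(\R)$ to be $J_{\tilde x_G}$-stable and hence a $G$-module. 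Your use of polynomial-for-free is mislocated: it is not a bookkeeping device for an enlargement of $\mathscr{B}$; it is the tool that controls the vertical monodromy so the complex-analytic input on $V_H$ can be extracted. Moreover, replacing $\mathscr{B}$ by $(H^\natural(\R)^+\cdot\mathscr{B})^{\Zar}$ strictly increases $\dim\mathscr{B}$, which runs against the upward induction on $\dim\mathscr{B}$ the proof actually uses (that induction is invoked only to shrink $\mathscr{B}$ to a very general Hilbert-scheme fiber), and you would also have no control over how $\dim\mathscr{B}-\dim\mathscr{Z}$ changes under the enlargement. Finally, a small point: $H$ is defined via $\Gamma\cap\stab_{P(\R)^+}(\mathscr{B})$, not as the neutral component of $\stab_{P(\C)}(\mathscr{B})$, so the equivalence you assert between normality of $H$ and of $H_P$ is not automatic.
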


The proof of Proposition~\ref{PropNormOfQStab} is by upward induction on $\dim \mathscr{B}$. It is clearly true for the starting case $\dim \mathscr{B} = 0$ because $H = 1$ in this case.

\subsection{Algebraic family associated with $\mathscr{B}$}\label{SubsectionAlgFamilyAssoWithB}
Mok's idea to prove the Ax type transcendence results is to use algebraic foliations. In our situation, we wish to construct a family $\mathscr{F}'$ of varieties in $\cX^+$ associated with $\mathscr{Z}$, such that $\mathscr{F}'$ is $\Gamma_0$-invariant for a suitable subgroup $\Gamma_0$ of $\Gamma$. This construction can be realized, for example, by using Hilbert schemes. Then $\bu(\mathscr{F}')$ is a foliation on $M$. Next we wish to improve the algebraicity of $\bu(\mathscr{F}')$ to make it into an algebraic subvariety. We present this process for our situation in this subsection. 
We point out that it is \cite[$\mathsection$3]{MokAx-Schanuel-for} adapted to mixed Shimura varieties of Kuga type with some slight modifications.

Let $Y = \bu(\tilde{Z})^{\mathrm{Zar}} = \mathrm{pr}_M(\mathscr{Z})^{\mathrm{Zar}}$. Let $\cX^\vee$ be as in Proposition~\ref{PropositionRealizationOfX}, then $\cX^+$ is open (in the usual topology) semi-algebraic in $\cX^\vee$. Let $\bH$ be the Hilbert scheme of all subvarieties of $\cX^\vee \times Y$ with the same Hilbert polynomial as $\mathscr{B}$, and let $\mathbscr{B} \rightarrow \bH$ be the (modified) universal family, namely
\[
\mathbscr{B} = \{(\tilde{x},m,[W])\in \cX^+ \times Y \times \bH : (\tilde{x},m) \in W \} \hookrightarrow (\cX^+ \times Y) \times \bH \subseteq (\cX^+ \times M) \times \bH
\]
where $[W]$ means the point of $\bH$ representing $W$. 
It is known that $\bH$ is proper. Denote by
\[
\psi \colon \mathbscr{B} \rightarrow \cX^+ \times M
\]
the natural projection. Then $\psi$ is proper since $\bH$ is proper.

The action of $\Gamma$ on $\cX^+ \times M$ induces an action of $\Gamma$ on $\mathbscr{B}$ by
\begin{equation}\label{EqActionGammaOnB}
\gamma(\tilde{x}, m, [W]) = (\gamma\tilde{x}, m, [\gamma W]).
\end{equation}
It is clear that $\psi$ is $\Gamma$-equivariant.

Recall that $\Delta = \mathrm{graph}(\cX^+ \rightarrow M) \subseteq \cX^+ \times M$. Define $\mathbscr{Z} \subseteq \mathbscr{B} \cap (\Delta \times \bH)$ to be
\begin{equation}\label{EqFiberDimIntersectionDim}
\mathbscr{Z} = \{(\tilde{\delta},[W]) \in \Delta \times \bH : \tilde{\delta} \in \Delta \cap W, ~\dim_{\tilde{\delta}} (\Delta \cap W)\ge \dim \mathscr{Z}\}. 
\end{equation}
Then $\mathbscr{Z}$ is a closed complex analytic subset of $\mathbscr{B}$. Hence $\psi(\mathbscr{Z})$ is closed complex analytic in $\cX^+ \times M$ since $\psi$ is proper.

Note that $\Delta$ is $\Gamma$-invariant. So $\mathbscr{Z}$ is $\Gamma$-invariant for the action \eqref{EqActionGammaOnB}. Hence we can define $\Gamma \backslash \mathbscr{Z}$, which is naturally a complex analytic variety. Moreover $\psi(\mathbscr{Z})$ is also $\Gamma$-invariant since $\psi$ is $\Gamma$-equivariant. We thus obtain the following commutative diagram:
\begin{equation}\label{EqBigDiagramForNormality}
\xymatrix{
\mathbscr{Z} \ar[r]^-{\psi} \ar[d] & \psi(\mathbscr{Z}) \ar@{}[r]|{\subseteq} \ar[d] & \cX^+ \times M \ar[d]^{(\bu,\mathrm{id})} \\
 \Gamma \backslash \mathbscr{Z} \ar[r]^-{\bar{\psi}} & \Gamma \backslash \psi(\mathbscr{Z}) \ar@{}[r]|{\subseteq} & M  \times M.
}
\end{equation}
Then $\bar{\psi}$ is proper since $\psi$ is proper.

We are now ready to prove the following result.

\begin{prop}\label{PropAlgebraicityOfTheAlgebraicFoliation}
The subset $(\bu,\mathrm{id})(\psi(\mathbscr{Z})) = \bar{\psi}(\Gamma \backslash \mathbscr{Z})$ is a closed algebraic subvariety of $M  \times M$.
\end{prop}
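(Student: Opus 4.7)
The plan is to identify $\bar{\psi}(\Gamma \backslash \mathbscr{Z})$ as a closed complex analytic definable subset of the quasi-projective variety $M \times M$, and then conclude algebraicity via Peterzil-Starchenko's o-minimal Chow theorem \cite[Theorem~4.5]{PeterzilComplex-analyti} (cf.\ \cite[Theorem~2.2]{MokAx-Schanuel-for}), exactly as in the pure setting of \cite[$\mathsection$3]{MokAx-Schanuel-for}.

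First I would establish that $\bar{\psi}(\Gamma\backslash\mathbscr{Z})$ is closed complex analytic in $M\times M$. The map $\psi$ is proper because $\bH$ is proper, so the induced quotient map $\bar{\psi}\colon \Gamma\backslash \mathbscr{Z}\to M\times M$ is also proper, as already observed below diagram~\eqref{EqBigDiagramForNormality}. Since $\mathbscr{Z}$ is a closed complex analytic subset of $\mathbscr{B}$ (the conditions $\tilde{\delta}\in\Delta\cap W$ and $\dim_{\tilde{\delta}}(\Delta\cap W)\ge \dim\mathscr{Z}$ are both closed analytic), it descends to a closed complex analytic subset of the complex analytic space $\Gamma\backslash\mathbscr{B}$, and Remmert's proper mapping theorem then gives that $\bar{\psi}(\Gamma\backslash\mathbscr{Z})$ is closed complex analytic in $M\times M$.

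Next I would check definability in $\R_{\mathrm{an},\exp}$. Let $\mathfrak{F}\subseteq\cX^+$ be the fundamental set fixed in $\mathsection$\ref{SubsectionFundamentalSet}, for which $\bu|_{\mathfrak{F}}$ is definable. Set $\mathbscr{Z}_{\mathfrak{F}}:=\mathbscr{Z}\cap(\mathfrak{F}\times M\times\bH)$. The Hilbert scheme $\bH$ and the universal family $\mathbscr{B}\hookrightarrow(\cX^+\times M)\times\bH$ are algebraic in $\cX^\vee\times M\times\bH$, and $\mathfrak{F}$ and $\mathfrak{F}\times M\times\bH$ are definable. The condition $\tilde{\delta}=(\tilde{x},m)\in\Delta$ restricted to $\tilde{x}\in\mathfrak{F}$ becomes the definable condition $m=\bu(\tilde{x})$; the fibral dimension condition $\dim_{\tilde{\delta}}(\Delta\cap W)\ge\dim\mathscr{Z}$ is definable in any definable family. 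Thus $\mathbscr{Z}_{\mathfrak{F}}$ is definable, and hence so is its projection $\psi(\mathbscr{Z}_{\mathfrak{F}})\subseteq\mathfrak{F}\times M$. Since $\bu(\mathfrak{F})=M$ and $\psi(\mathbscr{Z})$ is $\Gamma$-invariant, one has
\[
(\bu,\mathrm{id})(\psi(\mathbscr{Z}_{\mathfrak{F}}))=(\bu,\mathrm{id})(\psi(\mathbscr{Z}))=\bar{\psi}(\Gamma\backslash\mathbscr{Z}),
\]
which is therefore a definable subset of $M\times M$.

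Combining the two paragraphs, $\bar{\psi}(\Gamma\backslash\mathbscr{Z})$ is a closed complex analytic definable subset of the quasi-projective variety $M\times M$, so Peterzil-Starchenko's o-minimal Chow yields that it is algebraic. The main technical point to be careful about is the definability of the fibral dimension condition cutting out $\mathbscr{Z}$ and, correspondingly, that the descent from $\mathbscr{Z}_{\mathfrak{F}}$ to $\Gamma\backslash\mathbscr{Z}$ genuinely produces the same image under $\bar\psi$; both are handled by standard o-minimal dimension theory combined with the fact that $\mathfrak{F}$ meets every $\Gamma$-orbit.
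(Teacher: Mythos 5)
Your proposal is correct and follows essentially the same route as the paper: reduce definability of the image to definability of $\mathbscr{Z}\cap(\mathfrak{F}\times M\times\bH)$ using the definable $\bu|_{\mathfrak{F}}$, use $\Gamma$-invariance together with properness of $\psi$ to see the image is closed complex analytic and definable in $M\times M$, and then invoke Peterzil--Starchenko. The only cosmetic difference is that you make the appeal to Remmert's proper mapping theorem explicit, whereas the paper treats that point as already established in the surrounding discussion.
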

\begin{proof}
We claim that $\psi(\mathbscr{Z}) \cap (\mathfrak{F} \times M)$ is definable in $\R_{\mathrm{an},\exp}$. Since
\[
\psi(\mathbscr{Z}) \cap (\mathfrak{F} \times M) = \psi(\mathbscr{Z} \cap (\mathfrak{F} \times M \times \bH)),
\]
it suffices to prove that $\mathbscr{Z} \cap (\mathfrak{F} \times M \times \bH)$ is definable. But
\begin{align*}
\mathbscr{Z} \cap (\mathfrak{F} \times M \times \bH) = \{(\tilde{x},m,[W]) \in \mathfrak{F} \times M \times \bH :  & ~ (\tilde{x},m) \in W,~\tilde{x} \in (\bu|_{\mathfrak{F}})^{-1}(m), \\ 
& ~\dim_{(\tilde{x},m)} (\Delta \cap (\mathfrak{F} \times M) \cap W)\ge \dim \mathscr{Z} \}.
\end{align*}
Hence $\mathbscr{Z} \cap (\mathfrak{F} \times M \times \bH)$ is definable since $\bu|_{\mathfrak{F}}$ is definable.

Now since $\psi(\mathbscr{Z})$ is $\Gamma$-invariant, we have that
\[
(\bu,\mathrm{id})(\psi(\mathbscr{Z})) = (\bu,\mathrm{id})\left( \psi(\mathbscr{Z}) \cap (\mathfrak{F} \times M) \right)
\]
is closed complex analytic and definable in $M  \times M$. Hence $(\bu,\mathrm{id})(\psi(\mathbscr{Z}))$ is closed algebraic by Peterzil-Starchenko's o-minimal Chow \cite[Theorem~4.5]{PeterzilComplex-analyti}; see also \cite[Theorem~2.2]{MokAx-Schanuel-for}.
\end{proof}

\begin{cor}\label{CorAlgebraicityOfTheAlgebraicFoliation}
We have $\bar{\psi}(\Gamma \backslash \mathbscr{Z}) = Y$ with $Y$ viewed as a subvariety of $M  \times M$ via the diagonal embedding.
\end{cor}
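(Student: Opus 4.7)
The plan is to prove the two inclusions separately, each of which turns out to be essentially formal given the preceding proposition.

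For $\bar{\psi}(\Gamma\backslash\mathbscr{Z})\subseteq\{(m,m):m\in Y\}$, the argument is tautological from the setup. By construction $\mathbscr{B}\subseteq(\cX^+\times Y)\times\bH$, so any $(\tilde{\delta},[W])\in\mathbscr{Z}\subseteq\mathbscr{B}\cap(\Delta\times\bH)$ has $\tilde{\delta}\in\Delta\cap(\cX^+\times Y)$. Such a $\tilde{\delta}$ is of the form $(\tilde{x},\bu(\tilde{x}))$ with $\bu(\tilde{x})\in Y$, and its image under $(\bu,\mathrm{id})\circ\psi$ is $(\bu(\tilde{x}),\bu(\tilde{x}))$, which lies in the diagonal copy of $Y$ inside $M\times M$.

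For the reverse inclusion, the key observation is to use $\mathscr{B}$ itself as a test point of $\bH$. Since, by hypothesis in Theorem~\ref{ThmEquivalentFormOfAS}, $\mathscr{Z}$ is a complex analytic irreducible component of $\mathscr{B}\cap\Delta$ of dimension $\dim\mathscr{Z}$, for every $\tilde{\delta}=(\tilde{x},\bu(\tilde{x}))\in\mathscr{Z}$ we have $\dim_{\tilde{\delta}}(\Delta\cap\mathscr{B})\ge\dim\mathscr{Z}$, so $(\tilde{\delta},[\mathscr{B}])\in\mathbscr{Z}$ by the definition \eqref{EqFiberDimIntersectionDim}. Applying $\psi$ shows $\mathscr{Z}\subseteq\psi(\mathbscr{Z})$, and hence $\{(m,m):m\in\bu(\tilde{Z})\}\subseteq(\bu,\mathrm{id})(\psi(\mathbscr{Z}))=\bar{\psi}(\Gamma\backslash\mathbscr{Z})$ after passing through the uniformization.

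To upgrade this to all of the diagonal $Y$, I invoke Proposition~\ref{PropAlgebraicityOfTheAlgebraicFoliation}: the set $\bar{\psi}(\Gamma\backslash\mathbscr{Z})$ is a closed algebraic subvariety of $M\times M$. Thus it contains the Zariski closure of $\{(m,m):m\in\bu(\tilde{Z})\}$, which is exactly the diagonal copy of $Y=\bu(\tilde{Z})^{\Zar}$. Combining with the first inclusion yields the claimed equality. There is no real obstacle here: all of the nontrivial content (the o-minimal definability of $\psi(\mathbscr{Z})\cap(\mathfrak{F}\times M)$ and the application of Peterzil--Starchenko) was already carried out in Proposition~\ref{PropAlgebraicityOfTheAlgebraicFoliation}; this corollary merely identifies the algebraic subvariety produced there.
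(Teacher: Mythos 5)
Your proof is correct and follows essentially the same route as the paper: both directions of the inclusion rest on (a) $\psi(\mathbscr{Z})\subseteq\Delta\cap(\cX^+\times Y)$ and (b) $\mathscr{Z}\subseteq\psi(\mathbscr{Z})$ via the test point $[\mathscr{B}]\in\bH$, with Proposition~\ref{PropAlgebraicityOfTheAlgebraicFoliation} supplying the closedness needed to pass to Zariski closures. The only cosmetic difference is that the paper finishes by projecting to the first factor and comparing images, whereas you argue directly with the diagonal copy of $\bu(\tilde Z)$ inside $\bar\psi(\Gamma\backslash\mathbscr{Z})$ and take its Zariski closure; the content is identical.
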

\begin{proof}
First, observe that $\psi(\mathbscr{Z}) \subseteq \Delta$. Hence $\bar{\psi}(\Gamma \backslash \mathbscr{Z}) = (\bu,\mathrm{id})(\psi(\mathbscr{Z})) \subseteq M$ with $M$ viewed as a subvariety of $M \times M$ via the diagonal embedding. Next, we have $\psi(\mathbscr{Z}) \subseteq \psi(\mathbscr{B}) \subseteq \cX^+ \times Y$ by definition of $\mathbscr{B}$. Hence $\bar{\psi}(\Gamma \backslash \mathbscr{Z}) = (\bu,\mathrm{id})(\psi(\mathbscr{Z})) \subseteq \mathrm{pr}_2^{-1}(Y) \subseteq M \times M$ for the projection to the second factor $\mathrm{pr}_2 \colon M \times M \rightarrow M$. So $\bar{\psi}(\Gamma \backslash \mathbscr{Z}) \subseteq M \cap \mathrm{pr}_2^{-1}(Y) = Y$ with $Y$ viewed as a subvariety of $M \times M$ via the diagonal embedding.

By assumption we have $\mathscr{Z} \subseteq \psi(\mathbscr{Z})$. Hence $(\bu,\mathrm{id})(\mathscr{Z}) \subseteq (\bu,\mathrm{id})(\psi(\mathbscr{Z})) = \bar{\psi}(\Gamma \backslash \mathbscr{Z})$. Applying the projection $\mathrm{pr}_1 \colon M \times M \rightarrow M$ to the first factor, we get
\begin{equation}\label{EqTildeZT0}
\bu(\tilde{Z}) = \bu(\mathrm{pr}_{\cX^+}(\mathscr{Z})) \subseteq \mathrm{pr}_1(\bar{\psi}(\Gamma \backslash \mathbscr{Z})).
\end{equation}
Taking Zariski closures of both sides, we get $Y \subseteq \mathrm{pr}_1(\bar{\psi}(\Gamma \backslash \mathbscr{Z}))$. But $\bar{\psi}(\Gamma \backslash \mathbscr{Z}) \subseteq Y$ with $Y$ viewed as a subvariety of $M \times M$ via the diagonal embedding. We are done.
\end{proof}

\subsection{The $\Q$-stabilizer and first steps towards the normality}\label{SubsectionFirstStepTowardsNormality}
In this subsection we relate $Y$ to the $\Q$-stabilizer of $\mathscr{B}$ via monodromy. We point out that it is \cite[$\mathsection$3]{MokAx-Schanuel-for} adapted to the universal abelian variety. However we need to carefully distinguish the Mumford-Tate group and the monodromy group in the mixed case, since the underlying group is no longer reductive (so that we do not have a decomposition $\cX^+ = \cX_1^+ \times \cX_2^+$ as in the pure case, and are thus not able to replace $\cX^+$ by an $N(\R)^+$-orbit).


Let $\mathbscr{Z}_0$ be the complex analytic irreducible component of $\mathbscr{Z}$ which contains $\mathscr{Z} \times [\mathscr{B}]$. By Corollary~\ref{CorAlgebraicityOfTheAlgebraicFoliation},  $\bar{\psi}(\Gamma\backslash\mathbscr{Z}_0) = Y$ since $Y$ is irreducible. Consider the following map
\begin{equation}\label{EqLongMapFundamentalGroups}
\pi_1(\Gamma\backslash\mathbscr{Z}_0) \xrightarrow{\bar{\psi}_*} \pi_1(Y) \rightarrow \Gamma.
\end{equation}
Denote by $\Gamma_0$ the image of this map, and by $N$ the identity component of the $\Q$-Zariski closure (in $P$) of $\Gamma_0$. Then it is clear that $\Gamma_0 \mathbscr{Z}_0 \subseteq \mathbscr{Z}_0$ for the action $\Gamma$ on $\mathbscr{B}$ defined by \eqref{EqActionGammaOnB}.
\begin{lemma}\label{LemmaNnormalinP}
We have  $N \lhd P$. Moreover $\tilde{Z}^{\biZar} = N(\R)^+\tilde{z}$ for any $\tilde{z} \in \tilde{Z}$.
\end{lemma}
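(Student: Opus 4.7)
The plan is to realise $N$ as (the identity component of the $\Q$-Zariski closure of) the topological monodromy group of $Y=\bu(\tilde Z)^{\Zar}\subseteq M$, and then to invoke the algebraic-monodromy theorem in the mixed-Kuga form. First I would argue that $\bar\psi\colon\Gamma\backslash\mathbscr{Z}_0\to Y$ is a proper surjection of irreducible complex analytic varieties: properness is inherited from $\psi$, and surjectivity onto $Y$ is Corollary~\ref{CorAlgebraicityOfTheAlgebraicFoliation}. Let $\tilde Y$ be the complex analytic irreducible component of $\bu^{-1}(Y)$ containing $\tilde Z$, and let $\Gamma_Y=\mathrm{Stab}_\Gamma(\tilde Y)$ be the ordinary monodromy group of $Y$. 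A standard argument on the generic fibre of $\bar\psi$ (mirroring the pure-case step in \cite{MokAx-Schanuel-for}) shows that $\bar\psi_*\pi_1(\Gamma\backslash\mathbscr{Z}_0)$ has finite index in $\pi_1(Y)$; composing with $\pi_1(Y)\to\Gamma$ realises $\Gamma_0$ as a finite-index subgroup of $\Gamma_Y$. Hence $N$ is precisely the connected algebraic monodromy group of $Y\subseteq M$.

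With this identification, the normality $N\lhd P$ follows from the generic Mumford-Tate set-up. After the reduction carried out at the start of $\mathsection$\ref{SectionSetup}, $(P,\cX^+)$ is the smallest connected mixed Shimura datum of Kuga type whose space contains $\tilde Z$, so the generic Mumford-Tate group along $\tilde Z$ (and hence along $Y$) equals $P$. The algebraic-monodromy theorem of Andr\'e, together with its extension to mixed Shimura data of Kuga type used in \cite{GaoTowards-the-And}, then gives $N\lhd P^{\der}$. In the Kuga case $P^{\der}=V\rtimes G^{\der}$; the centre $Z(G)$ commutes with $G^{\der}$ and acts on $V$ through Hodge-theoretic scalars that preserve every monodromy-stable $\Q$-subspace, so conjugation by $Z(G)$ normalises $N$. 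Combined with $N\lhd P^{\der}$, this upgrades to $N\lhd P$.

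The \emph{Moreover} clause would then follow from a two-sided containment. By \cite[Theorem~8.1]{GaoTowards-the-And} the set $\tilde Z^{\biZar}$ is weakly special, so by minimality of $(P,\cX^+)$ one can write $\tilde Z^{\biZar}=N'(\R)^+\tilde z$ for some $N'\lhd P^{\der}$ and any $\tilde z\in\tilde Z$. Since $\Gamma_0$ preserves $\tilde Y\supseteq\tilde Z^{\biZar}$, the orbit $\Gamma_0\cdot\tilde z$ lies in $N'(\R)^+\tilde z$; taking $\Q$-Zariski closures yields $N\subseteq N'$. Conversely, once $N\lhd P$ is known, the set $N(\R)^+\tilde z$ is weakly special, and because the orbit of the connected algebraic monodromy group through any point of $\tilde Y$ contains $\tilde Z$ (the classical consequence of the theorem of the fixed part) one has $\tilde Z\subseteq N(\R)^+\tilde z$, whence minimality of $\tilde Z^{\biZar}$ forces $N'\subseteq N$.

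The main obstacle I anticipate is the implication $N\lhd P^{\der}\Rightarrow N\lhd P$: in the pure case it is immediate because $G=Z(G)\cdot G^{\der}$ is an almost direct product, but in the mixed Kuga case no analogous splitting of $P$ is available and one must genuinely exploit the Hodge-theoretic form of the $Z(G)$-action on $V$. A secondary technical point is the monodromy identification itself, where one needs to track connectedness of generic fibres of $\bar\psi$ carefully in order for the finite-index statement on $\pi_1$ to go through in the mixed setting.
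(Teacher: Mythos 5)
Your high-level strategy coincides with the paper's: identify $N$ as the connected algebraic monodromy group of $Y$ (via the finite-index property of $\bar\psi_*$) and apply Andr\'e's theorem. The paper's own proof is terser, delegating both delicate steps to citations—\cite[$\mathsection$3.3]{GaoTowards-the-And} for the upgrade from $N\lhd P^{\der}$ to $N\lhd P$, and \cite[Theorem~8.1]{GaoTowards-the-And} for the "Moreover" clause. Your attempt to reconstruct those cited arguments has two real gaps. The claim that $Z(G)$ acts on $V$ "through Hodge-theoretic scalars that preserve every monodromy-stable $\Q$-subspace" is not correct as stated: by Definition~\ref{DefnMixedShimuraDatum}(iv), $Z(G)$ is a torus which in PEL examples is strictly larger than $\G_m$ and acts on $V$ through a non-scalar representation, and an arbitrary $G^{\der}$-stable subspace $V_N$ need not be $Z(G)$-stable—what makes it so is that $V_N$ is a sub-Hodge structure of $V$, which is exactly the input the cited reference provides. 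In the "Moreover" argument, the inclusion $\tilde Y\supseteq\tilde Z^{\biZar}$ is backwards ($\tilde Z^{\biZar}$ is the component of $\bu^{-1}(Y^{\biZar})$ containing $\tilde Z$, so $\tilde Y\subseteq\tilde Z^{\biZar}$), and the appeal to "the classical consequence of the theorem of the fixed part" to conclude $\tilde Z\subseteq N(\R)^+\tilde z$ essentially restates \cite[Theorem~8.1]{GaoTowards-the-And} rather than deriving it—that inclusion is proved \emph{using} the theorem of the fixed part, and quoting it here risks circularity with the very characterization of $\tilde Z^{\biZar}$ you are trying to establish.

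Your self-assessment at the end is accurate: $N\lhd P^{\der}\Rightarrow N\lhd P$ is the genuine obstacle in the mixed Kuga case, and the paper sidesteps it by citing \cite[$\mathsection$3.3]{GaoTowards-the-And}. The Zariski-closure step "$\Gamma_0\tilde z\subseteq N'(\R)^+\tilde z\Rightarrow N\subseteq N'$" is salvageable but should be spelled out: normality of $N'$ gives $\Gamma_0\subseteq N'(\R)^+\cdot\mathrm{Stab}_{P(\R)^+}(\tilde z)$, and since $\mathrm{Stab}_{P(\R)^+}(\tilde z)$ is compact, $\Gamma_0\cap N'(\Q)$ has finite index in $\Gamma_0$, which is what forces the $\Q$-Zariski closure into $N'$.
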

\begin{proof}
Recall our assumption that $(P,\cX^+)$ is the smallest connected mixed Shimura subdatum of Kuga type such that $\tilde{Z} \subseteq \cX^+$. Therefore $M$ is the smallest connected mixed Shimura subvariety of Kuga type which contains $Y = \bu(\tilde{Z})^{\mathrm{Zar}}$.

Since $\bar{\psi}$ is proper, the image of $\bar{\psi}_*$ has finite index in $\pi_1(Y)$. 
Hence $N$ equals the connected algebraic monodromy group of $Y$. By the last paragraph, we have $N \lhd P$ by Andr\'{e} \cite[$\mathsection$5, Theorem~1]{AndreMumford-Tate-gr}; see \cite[$\mathsection$3.3]{GaoTowards-the-And}.

Next observe that $\bu(\tilde{Z})^{\biZar} = Y^{\biZar}$. So $\tilde{Z}^{\biZar} = N(\R)^+\tilde{z}$ (for any $\tilde{z} \in \tilde{Z}$) by \cite[Theorem~8.1]{GaoTowards-the-And}. Thus we finish the proof.
\end{proof}

Denote by $\theta \colon \mathbscr{B} \rightarrow \bH$ the natural projection. Let $\mathbscr{F} = \theta^{-1}(\theta(\mathbscr{Z}_0)) = \{(\tilde{x},m,[W]) : [W] \in\theta(\mathbscr{Z}_0), ~ (\tilde{x},m) \in W\}$, then $\mathbscr{F} \subseteq \mathbscr{B}$ is the family of algebraic varieties parametrized by $\theta(\mathbscr{Z}_0) \subseteq \bH$, with the fiber over each $[W] \in \theta(\mathbscr{Z}_0)$ being $W$. Then we have
\[
\Gamma_0 \mathbscr{F} \subseteq \mathbscr{F}
\]
for the action $\Gamma$ on $\mathbscr{B}$ defined by \eqref{EqActionGammaOnB}; see below \eqref{EqLongMapFundamentalGroups}.

For any $[W] \in \theta(\mathbscr{Z}_0)$ and any complex analytic irreducible component $\mathscr{Z}'$ of its fiber $(\mathbscr{Z}_0)_{[W]}$, we have $\bu(\mathrm{pr}_{\cX^+}(\mathscr{Z}'))^{\mathrm{Zar}} \subseteq Y$ as $\mathscr{Z}' \subseteq \psi(\mathbscr{Z}_0)$. 
Thus $\mathrm{pr}_{\cX^+}(\mathscr{Z}')^{\biZar} \subseteq \tilde{Z}^{\biZar}$ as $Y^{\biZar} = \bu(\tilde{Z})^{\biZar}$. Moreover for a very general $[W]\in \theta(\mathbscr{Z}_0)$, \textit{i.e.} outside a countable union of proper closed subvarieties, there exists a component $\mathscr{Z}'$ of $(\mathbscr{Z}_0)_{[W]}$ such that $\mathrm{pr}_{\cX^+}(\mathscr{Z}')^{\biZar} = \tilde{Z}^{\biZar}$.

Denote by
\[
\Gamma_{\mathbscr{F}} = \{\gamma \in \Gamma : \gamma W \subseteq W,~\forall [W] \in \theta(\mathbscr{Z}_0)\}.
\]
Then for a very general $[W] \in \theta(\mathbscr{Z}_0)$ (but may be different from above), we have
\begin{equation}\label{EqStabOfGeneralFiber}
\mathrm{Stab}_{\Gamma}(W) = \Gamma_{\mathbscr{F}}.
\end{equation}
Indeed, let $\bH_{\gamma} := \{[W] \in \theta(\mathbscr{Z}_0) : \gamma W \subseteq W\}$ for each $\gamma \in \Gamma$. Then for any $\gamma \not \in \Gamma_{\mathbscr{F}}$, we have that $\bH_{\gamma}$ is a proper subvariety of $\theta(\mathbscr{Z}_0)$. Thus it suffices to take $[W] \in \theta(\mathbscr{Z}_0) \setminus \bigcup_{\gamma \in \Gamma \setminus \Gamma_{\mathbscr{F}}} \bH_{\gamma}$.

The two paragraphs above together imply: for a very general $[W]\in \theta(\mathbscr{Z}_0)$, we have $\mathrm{Stab}_{\Gamma}(W) = \Gamma_{\mathbscr{F}}$ and there exists a component $\mathscr{Z}'$ of $(\mathbscr{Z}_0)_{[W]}$ such that $\mathrm{pr}_{\cX^+}(\mathscr{Z}')^{\biZar} = \tilde{Z}^{\biZar}$.

It is known that $\mathscr{B}$ is a fiber of $\mathbscr{F}$. We claim that it suffices to handle the case where $\mathscr{B}$ is a very general fiber as in the previous paragraph. Indeed, take $[W] \in \theta(\mathbscr{Z}_0)$ very general. Let $\mathscr{Z}'$ be an irreducible component of $(\mathbscr{Z}_0)_{[W]}$ with $\bu(\mathrm{pr}_{\cX^+}(\mathscr{Z}'))^{\mathrm{biZar}} = \tilde{Z}^{\biZar}$. We have $\dim \mathscr{Z}' \ge \dim \mathscr{Z}$ by definition of $\mathbscr{Z}$, and $\dim W = \dim \mathscr{B}$ by definition of the Hilbert scheme $\bH$. Denote by $\mathscr{B}' = (\mathscr{Z}')^{\Zar}$, then $\mathscr{B}' \subseteq W$. Then $\dim \mathscr{B} - \dim \mathscr{Z} \ge \dim \mathscr{B}' - \dim \mathscr{Z}'$ and $\dim\tilde{Z}^{\biZar} = \dim \mathrm{pr}_{\cX^+}(\mathscr{Z}')^{\biZar}$. So to prove the Ax-Schanuel Theorem (Theorem~\ref{ThmEquivalentFormOfAS}) for $(\mathscr{B},\mathscr{Z})$, it suffices to prove it for $(\mathscr{B}', \mathscr{Z}')$. If $\dim \mathscr{B}' < \dim W = \dim \mathscr{B}$, then Proposition~\ref{PropNormOfQStab} follows from induction hypothese and we are done. If $\dim \mathscr{B}' = \dim W$, then $\mathscr{B}' = W$ is a very general fiber of $\mathbscr{F}$ as we desire.

 By \eqref{EqStabOfGeneralFiber}, $(\Gamma_{\mathbscr{F}}^{\Zar})^{\circ}$ is the $\Q$-stabilizer of $\mathscr{B}$, which is the $H$ defined in \eqref{EqDefnQStabilizerOfB}. The ``Moreover'' part of Lemma~\ref{LemmaNnormalinP} implies $\mathscr{B} = N(\R)^+\tilde{z} \times \bu(N(\R)^+\tilde{z})$, and hence $H$ is a subgroup of $N$.


\begin{lemma}\label{LemmaHnormalinN}
We have $H \lhd N$.
\end{lemma}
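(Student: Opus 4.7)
The plan is to show that $\Gamma_0$ normalizes the discrete group $\Gamma_{\mathbscr{F}}$ inside $\Gamma$, and then to pass to Zariski closures and identity components to conclude $N \subseteq N_P(H)$, which is the desired $H \lhd N$.

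First I would exploit the containment $\Gamma_0\mathbscr{F} \subseteq \mathbscr{F}$ recorded just below \eqref{EqLongMapFundamentalGroups}: it tells us that $\Gamma_0$ acts on the parameter space $\theta(\mathbscr{Z}_0)$ of the algebraic family $\mathbscr{F}$ by $\gamma \cdot [W] = [\gamma W]$. By \eqref{EqStabOfGeneralFiber} and the paragraph preceding it, the locus $U \subseteq \theta(\mathbscr{Z}_0)$ of parameters $[W]$ for which $\mathrm{Stab}_\Gamma(W) = \Gamma_{\mathbscr{F}}$ is the complement of a countable union of proper Zariski-closed subvarieties. For each fixed $\gamma \in \Gamma_0$, the intersection $U \cap \gamma U$ is again the complement of such a countable union, hence is Zariski-dense in $\theta(\mathbscr{Z}_0)$; I can therefore pick $[W] \in U \cap \gamma U$, so that both $\mathrm{Stab}_\Gamma(W) = \Gamma_{\mathbscr{F}}$ and $\mathrm{Stab}_\Gamma(\gamma^{-1}W) = \Gamma_{\mathbscr{F}}$.

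The main computation is then a stabilizer-transport argument. For any $h \in \Gamma_{\mathbscr{F}}$, the defining property of $\Gamma_{\mathbscr{F}}$ applied to $[\gamma^{-1}W] \in \theta(\mathbscr{Z}_0)$ gives $h\cdot \gamma^{-1}W \subseteq \gamma^{-1}W$, whence $(\gamma h \gamma^{-1})W \subseteq W$; this places $\gamma h \gamma^{-1}$ in $\mathrm{Stab}_\Gamma(W) = \Gamma_{\mathbscr{F}}$. Since $\gamma \in \Gamma_0$ and $h \in \Gamma_{\mathbscr{F}}$ were arbitrary, we conclude that $\Gamma_0$ normalizes $\Gamma_{\mathbscr{F}}$ inside $\Gamma$.

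To finish, recall that the normalizer $N_P(H)$ is a $\Q$-algebraic subgroup of $P$. Because $\Gamma_0$ normalizes $\Gamma_{\mathbscr{F}}$, it also normalizes the Zariski closure $\overline{\Gamma_{\mathbscr{F}}}^{\Zar}$ and hence its identity component $H$, so $\Gamma_0 \subseteq N_P(H)(\Q)$. Taking Zariski closure, $\overline{\Gamma_0}^{\Zar} \subseteq N_P(H)$, and hence $N = (\overline{\Gamma_0}^{\Zar})^\circ \subseteq N_P(H)$, which is precisely $H \lhd N$. I do not expect a serious obstacle here: once the algebraic family $\mathbscr{F}$ has been set up together with its monodromy group $\Gamma_0$, the statement is essentially formal, with the only mild care needed being the generic-position argument that $U \cap \gamma U$ is non-empty for each fixed $\gamma \in \Gamma_0$.
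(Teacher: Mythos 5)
Your proposal is correct and follows the same route as the paper. The paper's proof is a three-line version of exactly your argument: since $\Gamma_0\mathbscr{F}\subseteq\mathbscr{F}$, each $\gamma\in\Gamma_0$ sends a very general fiber $W$ to a very general fiber $W'=\gamma W$, so $\Gamma_{\mathbscr{F}} = \mathrm{Stab}_{\Gamma}(W') = \gamma\,\mathrm{Stab}_{\Gamma}(W)\gamma^{-1} = \gamma\,\Gamma_{\mathbscr{F}}\gamma^{-1}$, and then one takes Zariski closures. Your writeup just makes the ``both $W$ and $\gamma^{-1}W$ in the very general locus'' step explicit via the $U\cap\gamma U$ argument, and obtains the containment $\gamma\Gamma_{\mathbscr{F}}\gamma^{-1}\subseteq\Gamma_{\mathbscr{F}}$ directly from the definition of $\Gamma_{\mathbscr{F}}$ rather than from the stabilizer-conjugation identity applied to a second very general fiber; both are equivalent, and your version is slightly more economical in that it only needs $[\gamma^{-1}W]\in\theta(\mathbscr{Z}_0)$ rather than very general.
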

\begin{proof} Since $\Gamma_0 \mathbscr{F} \subseteq \mathbscr{F}$, every $\gamma \in \Gamma_0$ sends a very general fiber $W$ of $\mathbscr{F}$ to a very general fiber $W'$ of $\mathbscr{F}$. As $\mathrm{Stab}_{\Gamma}(W') = \mathrm{Stab}_{\Gamma}(\gamma W) = \gamma \mathrm{Stab}_{\Gamma}(W) \gamma^{-1}$, by \eqref{EqStabOfGeneralFiber} we get $\Gamma_{\mathbscr{F}} = \gamma \Gamma_{\mathbscr{F}} \gamma^{-1}$ for all $\gamma \in \Gamma_0$. Hence the conclusion follows by taking Zariski closures.
\end{proof}


\subsection{Normality of the $\Q$-stabilizer}\label{SubsectionLastStepTowardsNormality}
The argument in this subsection is new compared to the proof of the pure Ax-Schanuel theorem.


\begin{proof}[Proof of Proposition~\ref{PropNormOfQStab}]
Recall the definable set $\Theta$ defined in \eqref{EquationXiSetsForFiberOfMSV}
\[
\Theta=\{p\in P(\R): \dim(p^{-1}\mathscr{B}\cap  (\mathfrak{F} \times M) \cap \Delta)=\dim \mathscr{Z}\},
\]
where $\mathfrak{F}$ is the fundamental set for $\bu \colon \cX^+ \rightarrow M$ defined in $\mathsection$\ref{SubsectionFundamentalSet}. 
By Remark~\ref{RmkAssumptionASEnlarge}, we may and do assume that every positive dimensional semi-algebraic block in $\Theta$ is contained in a left coset of $\stab_{P(\R)}(\mathscr{B})$.

We have $H(\R)^+ \mathscr{B} \subseteq \mathscr{B}$ since $\mathscr{B}$ is algebraic in $\cX^+ \times M$ and the action of $P(\R)^+$ on $\cX^+ \times M$ is semi-algebraic.

Let $V_H = V \cap H$ and $V_N = V \cap N$. Let $G_H = H/V_H$, $G_N = N/V_N$ and $G = P/V$. Then $G_H \lhd G_N \lhd G$ by Lemma~\ref{LemmaNnormalinP} and Lemma~\ref{LemmaHnormalinN}. Hence $G_H \lhd G$ since any reductive group over $\Q$, in particular $G$, is an almost direct product of its center and almost simple normal subgroups.

Now we are left to prove the following two facts.
\begin{enumerate}
\item[(i)] $V_H$ is a $G$-submodule of $V$.
\item[(ii)] $G_H$ acts trivially on $V/V_H$.
\end{enumerate}



We start with (ii). By Lemma~\ref{LemmaHnormalinN}, we have $H\lhd N$. It follows that $G_H$ acts trivially on $V_N/V_H$. Hence it suffices to prove that $G_H$ acts trivially on $V/V_N$. But Lemma~\ref{LemmaNnormalinP} asserts that $N \lhd P$, and hence $G_N$ acts trivially on $V/V_N$. In particular $G_H$ acts trivially on $V/V_N$. Thus we have established (ii).

Now we prove (i). The ``Moreover'' part of Lemma~\ref{LemmaNnormalinP} implies that $\mathscr{B} \subseteq \tilde{F} \times F$, where $\tilde{F} = N(\R)^+\tilde{z}$ and $F = \bu(\tilde{F})$. Consider $\tilde{X} = \mathrm{pr}_{\cX^+}(\mathscr{B})$, then $\tilde{X} \subseteq \tilde{F}$. Applying the natural projection $\tilde{\pi} \colon (P,\cX^+) \rightarrow (G,\cX_G^+)$, we get $\tilde{X}_G \subseteq G_N(\R)^+\tilde{z}_G$. Here we denote by $\tilde{X}_G = \tilde{\pi}(\tilde{X})$ and $\tilde{z}_G = \tilde{\pi}(\tilde{z})$.

For convenience of readers, the rest of the proof of (i) is divided into 3 steps.

\noindent\boxed{\text{Step~I}} Any $\tilde{x}_G \in \cX_G^+$ gives a morphism $\C^\times = \S(\R) \rightarrow G(\R)$. If we endow $V(\R)$ with the complex structure determined by $V(\R) \cong \tilde{\pi}^{-1}(\tilde{x}_G)$, then a real subspace $V'$ of $V_{\R}$ is complex if and only if the following condition holds: $\tilde{x}_G(\sqrt{-1}) \cdot V' \subseteq V'$. 
Denote for simplicity by $J_{\tilde{x}_G} = \tilde{x}_G(\sqrt{-1})$. Thus under this complex structure on $V(\R)$, we have
\begin{itemize}
\item the smallest complex subspace of $V(\R)$ containing $V_H(\R)$ is $V_H(\R) + J_{\tilde{x}_G} V_H(\R)$.
\item the largest complex subspace of $V(\R)$ contained in $V_H(\R)$ is $V_H(\R) \bigcap J_{\tilde{x}_G} V_H(\R)$.
\end{itemize}

Now each point in $\tilde{X}_G$ is of the form $g\tilde{z}_G$ for some $g \in G_N(\R)^+$. 

We claim that $J_{\tilde{x}_G} V_H(\R)$ is independent of $\tilde{x}_G$ when $\tilde{x}_G$ varies in $G_N(\R)^+\tilde{z}_G$. For any $g\in G_N(\R)^+$, we have $J_{\tilde{x}_G}^{-1} g J_{\tilde{x}_G} \in G_N(\R)^+$ since $G_N \lhd G$. Hence $g J_{\tilde{x}_G} = J_{\tilde{x}_G} g'$ for some $g' \in G_N(\R)^+$. Note that $J_{g\tilde{x}_G} = gJ_{\tilde{x}_G} g^{-1}$. Thus
\[
J_{g\tilde{x}_G} V_H(\R) = gJ_{\tilde{x}_G} g^{-1} \cdot V_H(\R) \subseteq g J_{\tilde{x}_G} \cdot V_H(\R) = J_{\tilde{x}_G} g' \cdot V_H(\R) \subseteq J_{\tilde{x}_G} \cdot V_H(\R).
\]
Here we used the fact that $V_H$ is stable under $G_N$ (since $H \lhd N$). This proves the claim.

In particular, $J_{\tilde{x}_G} V_H(\R)$ is independent of $\tilde{x}_G$ for $\tilde{x}_G \in \tilde{X}_G$.



\noindent\boxed{\text{Step~II}} Next for any $\gamma_G \in \tilde{\pi}(\Theta) \cap \Gamma_G \supseteq \{\gamma_G \in \Gamma_G : \gamma_G \mathfrak{F}_G \cap \tilde{Z}_G \not= \emptyset\}$,\footnote{It is not hard to show this inclusion by using $\Theta \cap \Gamma \supseteq \{\gamma \in \Gamma: \gamma\mathfrak{F} \cap \tilde{Z} \not= \emptyset\}$. See \cite[proof of Lemma~10.2]{GaoTowards-the-And}.}  let $\tilde{Z}|_{\gamma_G \mathfrak{F}_G}^+$ be a complex analytic irreducible component of $\tilde{Z}|_{\gamma_G \mathfrak{F}_G} := \tilde{Z} \cap \tilde{\pi}^{-1}(\gamma_G \mathfrak{F}_G)$, and define the following definable set
\[
\Theta' = \{v \in V(\R) : \dim\big( (-v,\gamma_G^{-1}) \mathscr{B} \cap (\mathfrak{F} \times M) \cap \Delta \big) = \dim \mathscr{Z} \} \subseteq V(\R).
\]
Then $\Theta' \cap \Gamma \supseteq \{\gamma_V \in \Gamma_V : (\gamma_V,\gamma_G)\mathfrak{F} \cap \tilde{Z}|_{\gamma_G \mathfrak{F}_G}^+ \not= \emptyset\}$, and $(\Theta', \gamma_G^{-1}) \subseteq \Theta$.
Denote by $\Gamma_{V_H} = V_H(\Q) \cap \Gamma_V$.

Suppose $\{\gamma_V \in \Gamma_V : (\gamma_V,\gamma_G)\mathfrak{F} \cap \tilde{Z}|_{\gamma_G \mathfrak{F}_G}^+ \not= \emptyset\}$ is not contained in a finite union of $\Gamma_{V_H}$-cosets. Identify $\cX^+ \cong V(\R) \times \cX_G^+$, then $\mathfrak{F} = (-k,k)^{2g} \times \mathfrak{F}_G$ for some $k \ge 1$, where $g = \dim V$. Hence
\[
\{\gamma_V \in \Gamma_V : (\gamma_V,\gamma_G)\mathfrak{F} \cap \tilde{Z}|_{\gamma_G \mathfrak{F}_G}^+ \not= \emptyset\} = \{\gamma_V \in \Gamma_V :  \left( (\gamma_V + \gamma_G(-k,k)^{2g}) \times \gamma_G \mathfrak{F}_G \right) \cap \tilde{Z}|_{\gamma_G \mathfrak{F}_G}^+ \not= \emptyset\}.
\]
Let $p_V \colon \cX^+ \cong V(\R) \times \cX^+_G \rightarrow V(\R)$, then $p_V((\gamma_V + \gamma_G(-k,k)^{2g}) \times \gamma_G \mathfrak{F}_G)$ is contained in a Euclidean ball of radius at most $kH(\gamma_G)$. But $\tilde{Z}|_{\gamma_G \mathfrak{F}_G}^+$ is connected by choice. 
Thus if the set above is not contained in a finite union of $\Gamma_{V_H}$-cosets, then we get for free that it contains $\ge T$ elements in $\Gamma_V \setminus \Gamma_{V_H}$ of height at most $H(\gamma_G)T$ (for all $T\gg 0$). \footnote{The crucial point is that the group $V(\R)$ is Euclidean. See \cite[pp.~218-219]{TsimermanAx-Schanuel-and}.} 
Hence by Pila-Wilkie \cite[Theorem~3.6]{PilaO-minimality-an}, there exist two constants $c,\epsilon > 0$ with the following property: for each $T \gg 0$, $\Theta'$ contains a  semi-algebraic block $B'$ which is not in any coset of $V_H(\R)$ and which contains $\ge c T^{\epsilon}$ elements in $\Gamma_V$ outside $\Gamma_{V_H}$ of height at most $T$. Recall our assumption that every positive dimensional semi-algebraic block in $\Theta$ is contained in a left coset of $\stab_{P(\R)}(\mathscr{B})$. In particular $(B', \gamma_G^{-1}) \subseteq (\gamma_V' , \gamma_G^{-1}) \cdot \stab_{P(\R)}(\mathscr{B})$ for some $\gamma_V' \in B' \cap \Gamma_V$. Hence $(-\gamma_V' + \gamma_G B', 1) \subseteq \stab_{P(\R)}(\mathscr{B})$; the argument is similar to the end of Proposition~\ref{PropBignessOfQStabilizer} (right above Remark~\ref{RmkAssumptionASEnlarge}). But then
\[
\left( (-\gamma_V' + \gamma_G B') \cap \Gamma_V, 1 \right)\subseteq \stab_{P(\R)}(\mathscr{B}) \cap \Gamma \subseteq H(\Q).
\]
By letting $T \rightarrow \infty$ and varying $B'$ accordingly, we see that this inclusion cannot hold since each $B' \subseteq V(\R)$ thus obtained is not contained in any coset of $V_H(\R)$.

Thus $\{\gamma_V \in \Gamma_V : (\gamma_V,\gamma_G)\mathfrak{F} \cap \tilde{Z}|_{\gamma_G \mathfrak{F}_G}^+ \not= \emptyset\}$ is contained in a finite union of $\Gamma_{V_H}$-cosets. Hence for the map $\bar{\bu} \colon \cX^+ \rightarrow \Gamma_V \backslash \cX^+$,\footnote{The complex space $\Gamma_V \backslash \cX^+$ is a family of abelian varieties. In fact it is the pullback of the abelian scheme $M \rightarrow M_G$ under the uniformization $\cX_G^+ \rightarrow M_G$.} we have that $\bar{\bu}(\tilde{Z}|_{\gamma_G \mathfrak{F}_G}^+)$ is closed in $(\Gamma_V \backslash \cX^+)|_{\gamma_G \mathfrak{F}_G}$ in the usual topology.

In the following paragraph we consider the usual topology. As $\gamma_G\mathfrak{F}_G$ is an open subset of $\cX^+_G$ for each $\gamma_G \in \Gamma_G$, each $\tilde{Z}|_{\gamma_G\mathfrak{F}_G}^+$ is an open subset of $\tilde{Z}$. In particular there exists a $\tilde{Z}|_{\gamma_G\mathfrak{F}_G}^+$ as above such that $\dim \tilde{Z}|_{\gamma_G\mathfrak{F}_G}^+ = \dim \tilde{Z}$. Consider the closure $W$ of $\bar{\bu}(\tilde{Z}|_{\gamma_G\mathfrak{F}_G}^+)$ in $\Gamma_V \backslash \cX^+$. By the last paragraph, we have $W^{\circ} \subseteq \bar{\bu}(\tilde{Z}|_{\gamma_G\mathfrak{F}_G}^+) \subseteq W$ where $W^{\circ}$ is the interior of $W$. Now let $\tilde{Z}'$ be the complex analytic irreducible component of $\bar{\bu}^{-1}(W)$ that contains $\tilde{Z}|_{\gamma_G\mathfrak{F}_G}^+$. Then we have $\tilde{Z}' \subseteq \tilde{Z}$ and $\bar{\bu}(\tilde{Z}') = W$ is closed in $\Gamma_V\backslash \cX^+$.

Denote by $\mathscr{Z}' := \mathrm{graph}(\tilde{Z}' \rightarrow \bu(\tilde{Z}'))$. Then $\dim \mathscr{Z}' = \dim \mathscr{Z}$ because $\dim \tilde{Z}' = \dim \tilde{Z}$. By analytic continuation, we then have $(\mathscr{Z}')^{\Zar} = \mathscr{Z}^{\Zar} = \mathscr{B}$ and $(\tilde{Z}')^{\Zar} = \tilde{Z}^{\Zar} = \tilde{X}$.


\noindent\boxed{\text{Step~III}} 
Now let
\[
\Gamma' = \im \left( \pi_1(\bar{\bu}(\tilde{Z}')) \rightarrow \pi_1(\Gamma_V \backslash \cX^+) = \Gamma_V \right) \subseteq \Gamma_V.
\]
Then $\Gamma'$ stabilizes $\tilde{Z}'$ for the action of $\Gamma_V$ on $\cX^+$. Recall that $P(\R)^+$ acts on $\cX^+ \times M$ via its action on the first factor. So $\Gamma'$ stabilizes $\mathscr{Z}'$. Hence $(\Gamma')^{\Zar}(\R) \subset V(\R)$ stabilizes $\mathscr{B} = (\mathscr{Z}')^{\Zar}$. On the other hand $H$ is defined to be the $\Q$-stabilizer of $\mathscr{B}$. So $(\Gamma')^{\Zar} \subseteq H \cap V = V_H$.

Let us take a closer look at this. The identification $\pi_1(\Gamma_V \backslash \cX^+) = \Gamma_V$ is realized via \eqref{EqiDelta}
\[
i_{\cX_G^+}^{-1} \colon \cX^+ \cong V(\R) \times \cX_G^+
\]
and hence $\Gamma_V \backslash \cX^+ \cong (\Gamma_V\backslash V(\R)) \times \cX_G^+$. We have $\tilde{Z}' \subseteq \tilde{X} = (\tilde{Z}')^{\Zar} \subseteq \tilde{F} = N(\R)^+\tilde{z}$. Since $V_H \lhd N$, we can take the quotient of $\tilde{F} = N(\R)^+\tilde{z}$ by $V_H(\R)$ and get a real manifold. Call this quotient $q$. Denote by $\Gamma_{V_N} = \Gamma_V \cap N(\Q)$ and $\Gamma_{V_H} = \Gamma_V \cap H(\Q)$. Then we obtain a commutative diagram
\[
\xymatrix{
V_N(\R) \times \tilde{F}_G \cong \tilde{F} \ar[r]^-{q} \ar[d]_{\bar{\bu}} & q(\tilde{F}) \cong (V_N/V_H)(\R) \times \tilde{F}_G \ar[d]  \\
(\Gamma_{V_N}\backslash V_N(\R)) \times \tilde{F}_G \cong \bar{\bu}(\tilde{F}) \ar[r]^-{[q]} &  \big( (\Gamma_{V_N}/\Gamma_{V_H})\backslash (V_N/V_H)(\R) \big) \times \tilde{F}_G.
}
\]
Here all the isomorphisms in the diagram are compatible with the $i_{\cX_G^+}^{-1}$ above. For now on, by abuse of notation we no longer write $i_{\cX_G^+}^{-1}(\cdot)$.
 
Since $\Gamma' \subseteq V_H(\Q)$, we have that $[q](\bar{\bu}(\tilde{Z}'))$ is contained in $\{t\} \times \tilde{F}_G$ for some $t \in (\Gamma_{V_N}/\Gamma_{V_H})\backslash (V_N/V_H)(\R)$. But then $q(\tilde{Z}') \subseteq \{v'\} \times \tilde{F}_G$ for some $v' \in (V_N/V_H)(\R)$. Thus there exists $v \in V(\R)$ such that 
\begin{equation}\label{EqInclusionTildeZFamily}
\tilde{Z}' - (\{v\} \times \tilde{Z}'_G) \subseteq V_H(\R) \times \tilde{F}_G.
\end{equation}
But $i_{\cX_G^+}(\{v\} \times \tilde{Z}'_G)$ is complex analytic in $\cX^+$ by property (ii) of $i_{\cX_G^+}$ below \eqref{EqiDelta}. So the left hand side of \eqref{EqInclusionTildeZFamily} is complex analytic. 

We have proven in Step~I that over each $\tilde{x}_G \in \tilde{F}_G$, the largest complex subspace of $V(\R)$ contained in $V_H(\R)$ is $V_H(\R) \bigcap J_{\tilde{x}_G} V_H(\R)$ and $J_{\tilde{x}_G} V_H(\R)$ is independent of the choice of $\tilde{x}_G$. Hence \eqref{EqInclusionTildeZFamily} implies
\[
\tilde{Z}' - (\{v\} \times \tilde{Z}'_G) \subseteq (V_H(\R) \bigcap J_{\tilde{x}_G} V_H(\R)) \times \tilde{F}_G
\]
since the left hand side is complex analytic. Taking the Zariski closures, we get
\[
\tilde{X} - (\{v\} \times \tilde{X}_G) \subseteq (V_H(\R) \bigcap J_{\tilde{x}_G} V_H(\R)) \times \tilde{F}_G.\footnote{We use the fact that any subset of $\cX^+$ is algebraic if and only if it is complex analytic and semi-algebraic.}
\]
But $V_H(\R)$ stabilizes $\tilde{X}$, so $V_H(\R) = V_H(\R) \bigcap J_{\tilde{x}_G} V_H(\R)$. But then $V_H(\R)$ is complex for the complex structure of $V(\R) \cong \cX^+_{\tilde{x}_G}$ for any $\tilde{x}_G \in \tilde{F}_G$. Since $\tilde{Z}$ is assumed to be Hodge generic in $(P,\cX^+)$, we may take $\tilde{x}_G$ such that $\mathrm{MT}(\tilde{x}_G) = G$. Then we see that $V_H$ is a $G$-module.
\end{proof}

\section{End of proof}\label{SectionEndOfProof}
Use the notation of $\mathsection$\ref{SectionQStabBig}. We finish the proof of Theorem~\ref{ThmEquivalentFormOfAS}. Let $\mathscr{B}$ and $\mathscr{Z} = \mathrm{graph}(\tilde{Z} \rightarrow \bu(\tilde{Z}))$ be as in the theorem. Denote by $\tilde{F} = \tilde{Z}^{\biZar}$. Then $\tilde{F}$ is weakly special by \cite[Theorem~8.1]{GaoTowards-the-And}, and hence $\tilde{F} = N(\R)^+\tilde{x}$ for some $N \lhd P$ and some $\tilde{x} \in \cX^+$. See $\mathsection$\ref{SubsectionWeaklySpecial}.

By Proposition~\ref{PropBignessOfQStabilizer}, it suffies to consider the case where $\dim H > 0$. By Proposition~\ref{PropNormOfQStab}, we may and do assume $H \lhd P$. Note that $H$ is a subgroup of $N$ by Lemma~\ref{LemmaNnormalinP} and Lemma~\ref{LemmaHnormalinN}.

Consider the quotient connected mixed Shimura datum of Kuga type $(P,\cX^+)/H$, which we denote by $(P',\cX^{\prime+})$. Use $M'$ to denote the corresponding connected mixed Shimura variety of Kuga type. We have the following commutative diagram:
\[
\xymatrix{
(P,\cX^+) \ar[r]^-{\tilde{\rho}} \ar[d]_{\bu} & (P',\cX^{\prime+}) \ar[d]^{\bu'} \\
M \ar[r]^{\rho} & M'.
}
\]
Let $\mathscr{B}' = (\tilde{\rho},\rho)(\mathscr{B})$, $\mathscr{Z}' = (\tilde{\rho},\rho)(\mathscr{Z})$ and $\tilde{Z}' = \tilde{\rho}(\tilde{Z})$. Then $\tilde{\rho}(\tilde{F}) = (\tilde{Z}')^{\mathrm{biZar}}$. Apply Proposition~\ref{PropBignessOfQStabilizer} to $\mathscr{B}'$ and $\mathscr{Z}'$. We have either
\begin{equation}\label{EqASAfterQuotient}
\dim \mathscr{B}' - \dim \mathscr{Z}' \ge \dim \tilde{\rho}(\tilde{F}),
\end{equation}
or the $\Q$-stabilizer of $\mathscr{B}'$, which we call $H'$, has positive dimension. But in the second case, $\tilde{\rho}^{-1}(H')(\R)^+$ stabilizes $\mathscr{B}$ and is larger than $H$, contradicting the maximality of $H$. So we are in the first case, namely \eqref{EqASAfterQuotient} holds.

For the quotient $(\tilde{\rho},\rho) \colon \cX^+ \times M \rightarrow \cX^{\prime+} \times M'$, 
any fiber of $\mathscr{B} \rightarrow \mathscr{B}'$ is of the form
\begin{equation}\label{EqFiberOfBBprime}
H(\R)^+\tilde{x} \times Y
\end{equation}
for some $\tilde{x} \in \mathrm{pr}_{\cX^+}(\mathscr{B})$ and some algebraic subvariety $Y$ of $M$ (with $\rho(Y)$ being a point). Hence any fiber of $\mathscr{Z}' \rightarrow \mathscr{Z}$ is of the form
\begin{equation}\label{EqFiberOfZZprime}
\mathrm{graph}(\tilde{Y} \rightarrow Y)
\end{equation}
where $\tilde{Y}$ is a complex analytic irreducible component of $\bu^{-1}(Y)$. By generic flatness we may choose a point $z' \in \mathscr{Z}'$ such that
\[
\dim \mathscr{Z} - \dim \mathscr{Z}' = \dim \mathscr{Z}_{z'} \quad \text{ and } \quad \dim \mathscr{B} - \dim \mathscr{B}' = \dim \mathscr{B}_{z'}.
\]
Thus 
\begin{align*}
\dim \mathscr{B} - \dim \mathscr{Z} & = (\dim \mathscr{B}' + \dim \mathscr{B}_{z'}) - (\dim \mathscr{Z}' + \dim \mathscr{Z}_{z'}) \\
& = (\dim \mathscr{B}' - \dim \mathscr{Z}') + (\dim (H(\R)^+\tilde{x} \times Y) - \dim Y) \quad\text{by \eqref{EqFiberOfBBprime} and \eqref{EqFiberOfZZprime}}\\
& \ge \dim \tilde{\rho}(\tilde{F}) + \dim H(\R)^+\tilde{x} \qquad \text{ by \eqref{EqASAfterQuotient}}\\
& = \dim (N/H)(\R)^+\tilde{\rho}(\tilde{x}) + \dim H(\R)^+\tilde{x} \\
& = \dim N(\R)^+\tilde{x} = \dim \tilde{F}.
\end{align*}
Hence we are done.

\section{Application to a finiteness result \textit{\`{a} la Bogomolov}}\label{SectionAppToAFinitenessResult}
The goal of this section is to prove a finiteness result \textit{\`{a} la Bogomolov}. Fix a connected mixed Shimura variety of Kuga type $M$ associated with $(P,\cX^+)$, and use $\bu \colon \cX^+ \rightarrow M$ to denote the uniformization. Fix an irreducible subvariety $Y$ of $M$.

The following definition was introduced by Habegger-Pila \cite{HabeggerO-minimality-an} to study the Zilber-Pink conjecture. Recall that  weakly special subvarieties of $M$ are precisely the bi-algebraic subvarieties; see $\mathsection$\ref{SubsectionGeomDescriptionBiAlg}.
\begin{defn}\label{DefnWeaklyOptimalSubvarieties}
\begin{enumerate}
\item[(i)] For any irreducible subvariety $Z$ of $M$, define $\delta_{\mathrm{ws}}(Z) = \dim Z^{\biZar} - \dim Z$.
\item[(ii)] A closed irreducible subvariety $Z$ of $Y$ is said to be \textbf{weakly optimal} if the following condition holds: $Z \subsetneq Z' \subseteq Y \Rightarrow \delta_{\mathrm{ws}}(Z') > \delta_{\mathrm{ws}}(Z)$, where $Z'$ is assumed to be irreducible.
\end{enumerate}
\end{defn}

\begin{thm}\label{ThmFinitenessAlaBogomolov}
There exists a finite set $\Sigma$ consisting of elements of the form $\left( (Q,\cY^+),N \right)$, where $(Q,\cY^+)$ is a connected mixed Shimura subdatum of $(P,\cX^+)$ and $N$ is a normal subgroup of $Q$ whose reductive part is semi-simple, such that the following property holds. If a closed irreducible subvariety $Z$ of $Y$ is weakly optimal, then there exists $\left( (Q,\cY^+),N \right) \in \Sigma$ such that $Z^{\biZar} = \bu(N(\R)^+\tilde{y})$ for some $\tilde{y} \in \cY^+$.
\end{thm}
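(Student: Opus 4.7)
The plan is to adapt the Daw-Ren argument \cite[Proposition~3.3]{DawRenAppOfAS} from pure Shimura varieties to the mixed Kuga setting, with Theorem~\ref{ThmASUnivAbVar} replacing the pure Ax-Schanuel theorem and with \cite[Theorem~12.2]{GaoTowards-the-And} handling finiteness of ``base'' Shimura subdata. For any weakly optimal $Z \subseteq Y$, the variety $F := Z^{\biZar}$ is weakly special by \cite[Theorem~8.1]{GaoTowards-the-And}, hence of the form $F = \bu(N(\R)^+\tilde{y})$ for some connected mixed Shimura subdatum of Kuga type $(Q, \cY^+)$ and some normal subgroup $N$ of $Q^{\der}$. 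After replacing $(Q, \cY^+)$ by the smallest such and enlarging $N$ appropriately, one may arrange $N \lhd Q$ with semi-simple reductive part. The goal is then to show that, as $Z$ varies over weakly optimal subvarieties of $Y$, the pair $((Q, \cY^+), N)$ ranges over a finite set.

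The first step is to introduce, for each pair of integers $(d_F, d_Z)$, a definable family $\Theta_{d_F, d_Z}$ of ``candidate weakly-optimal configurations'': pairs $(\tilde{x}, \tilde{x}') \in \mathfrak{F} \times \cX^+$ such that some irreducible algebraic subvariety $W \subseteq \cX^\vee$ of dimension $d_F$ contains both points, the image $\bu(W \cap \mathfrak{F})$ is contained in $Y$, and a complex analytic component of $W \cap \bu^{-1}(Y)$ through $\tilde{x}$ has dimension $d_F - d_Z$. Definability follows from the Hilbert-scheme parametrization of $\mathsection$\ref{SubsectionAlgFamilyAssoWithB} together with the definability of $\bu|_{\mathfrak{F}}$ in $\R_{\mathrm{an},\exp}$ \cite{PeterzilDefinability-of}. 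If $Z$ is weakly optimal with $\dim Z = d_Z$ and $\dim F = d_F$, the monodromy group $\Gamma_F$ of $F$ produces many $\Gamma$-translates of any basepoint inside $\Theta_{d_F, d_Z}$; Pila-Wilkie \cite[Theorem~3.6]{PilaO-minimality-an}, together with the volume estimates of $\mathsection$\ref{SectionSetup} and the \textit{polynomial-for-free} argument used in Theorem~\ref{ThmBignessOfQStab}, then produces a positive-dimensional semi-algebraic block $B \subseteq \Theta_{d_F, d_Z}$ through a prescribed basepoint.

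The second step applies the d\'{e}vissage of $\mathsection$\ref{SectionQStabBig}--$\mathsection$\ref{SectionQStabNormal} to each such $B$: invoking Theorem~\ref{ThmASUnivAbVar} in the form of Proposition~\ref{PropBignessOfQStabilizer} and Proposition~\ref{PropNormOfQStab}, one finds that $B$ lies in a left coset of $\stab_{P(\R)}(\mathscr{B})$ for an algebraic $\mathscr{B}$ whose $\Q$-stabilizer is a normal $\Q$-subgroup $N$ of a Shimura subdatum $(Q, \cY^+)$ of Kuga type with semi-simple reductive part, and the weakly-optimal hypothesis then forces $F = \bu(N(\R)^+\tilde{y})$. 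Since $\Theta_{d_F, d_Z}$ is a fixed definable set and $(d_F, d_Z)$ ranges over a finite set, o-minimality ensures that only finitely many distinct blocks (up to $\Gamma$-action) arise; combined with \cite[Theorem~12.2]{GaoTowards-the-And} controlling the pure part $(Q_G, \cY_G^+)$, this yields the desired finite set $\Sigma$. The main obstacle is to couple the counting in the ``horizontal'' base direction (controlled by hyperbolic volumes, as in Lemmas~\ref{LemmaVolumeLowerBoundPurePart}--\ref{LemmaVolumeUpperBoundPurePart}) with the counting in the ``vertical'' unipotent direction (essentially free by the Euclidean nature of $V(\R)$), exactly as in $\mathsection$\ref{SubsectionLastStepTowardsNormality}; once this coupling is executed, the extraction of $N$ proceeds uniformly and finiteness follows.
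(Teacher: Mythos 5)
There is a genuine gap at the heart of your proposal. Your second step rests on the claim that ``o-minimality ensures that only finitely many distinct blocks (up to $\Gamma$-action) arise'' from the definable family $\Theta_{d_F,d_Z}$. This is not an o-minimality fact: a definable set can have infinitely many semi-algebraic blocks and there is no a priori reason for the relevant ones to be finite modulo $\Gamma$. Moreover, you are re-running the Pila--Wilkie/volume/polynomial-for-free machinery of $\mathsection$\ref{SectionSetup}--$\mathsection$\ref{SectionQStabNormal} inside the finiteness proof, whereas in the paper those tools are used only to prove Ax--Schanuel itself; the finiteness argument then treats (weak) Ax--Schanuel as a black box, in the ``Zariski optimal'' formulation of Theorem~\ref{ThmASZariskiOptimal}, and never invokes Pila--Wilkie or counting.

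The paper's actual mechanism is ``countable $+$ definable $\Rightarrow$ finite,'' applied to an explicitly constructed definable parameter set. One fixes a finite set $\Omega$ of $\cG$-orbit representatives of pairs $(V',G')$ (a symplectic subspace of $V_\R$ and a semisimple subgroup of $G_\R$ with no compact factors) and shows first (Lemma~\ref{LemmaFinitenessPreliminaryAuxiliarySet}) that every weakly special subset meeting $\bu^{-1}(Y)\cap\mathfrak{F}$ is of the form $(\underline{g}\cdot\underline{t})\tilde{y}$ for $(\tilde y,\underline g,\underline t,v)$ in a definable set $\Upsilon$. One then cuts out a definable subset $\Xi\subseteq\Upsilon$ via local dimension functions $d,d_Y$ encoding Zariski optimality. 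For $(\tilde y,\underline g,\underline t,v)\in\Xi$, Theorem~\ref{ThmASZariskiOptimal} forces $(\underline g\cdot\underline t)\tilde y$ to be weakly special, hence determined by a $\Q$-subgroup $N$ of $P$ and a $v\in V(\Q)$; this makes the image set $\{(\underline g\cdot\underline t,v)\}$ countable. Since that image is also definable (as a definable image of the definable $\Xi$), it is finite (Lemma~\ref{LemmagtFinite}). Lemma~\ref{LemmaZariskiOptimalComeFromTriple} identifies the Zariski closures of Zariski optimal subsets meeting $\mathfrak{F}$ with elements of $\Xi$, and finally the weakly optimal subvarieties of $Y$ are reduced to Zariski optimal subsets of $\bu^{-1}(Y)$ by a short direct comparison of $\delta_{\mathrm{ws}}$ and $\delta_{\Zar}$; the finiteness of the subdata $(Q,\cY^+)$ with $N\lhd Q$ is handled by \cite[Lemma~12.1]{GaoTowards-the-And} (not Theorem~12.2, which is the pure-to-mixed Ullmo extension alluded to in the prose). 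If you want your proposal to close, you should replace the block-counting step by this countable-plus-definable argument, and in particular you need the parametrization by $\Omega\times\cG\times V(\R)$ rather than by a Hilbert-scheme family, since it is the $\Q$-rationality of the stabilizing subgroup (via weak Ax--Schanuel) that produces countability.
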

Our proof of Theorem~\ref{ThmFinitenessAlaBogomolov} follows the guideline of Daw-Ren \cite[Proposition~3.3]{DawRenAppOfAS} for the corresponding result for pure Shimura varieties, and plugs in the author's previous work on extending a finiteness result for pure Shimura varieties by Ullmo \cite[Th\'{e}or\`{e}me~4.1]{UllmoQuelques-applic} to the mixed case \cite[Theorem~12.2]{GaoTowards-the-And}.

\subsection{Preliminary}
We give an equivalent statement of the weak Ax-Schanuel theorem formulated by Habegger-Pila \cite{HabeggerO-minimality-an}. We start by the following definition.
\begin{defn}
\begin{enumerate}
\item[(i)] For any complex analytic irreducible subset $\tilde{Z}$ of $\cX^+$, define $\delta_{\Zar}(\tilde{Z}) = \dim \tilde{Z}^{\Zar} - \dim \tilde{Z}$.
\item[(ii)] A closed complex analytic irreducible subset $\tilde{Z}$ of $\bu^{-1}(Y)$ is said to be \textbf{Zariski optimal} if the following condition holds: $\tilde{Z} \subsetneq \tilde{Z}' \subseteq \bu^{-1}(Y) \Rightarrow \delta_{\Zar}(\tilde{Z}') > \delta_{\Zar}(\tilde{Z})$, where $\tilde{Z}'$ is assumed to be complex analytic irreducible.
\end{enumerate}
\end{defn}
It is clear that if $\tilde{Z}$ is Zariski optimal in $\bu^{-1}(Y)$, then $\tilde{Z}$ is a complex analytic irreducible component of $\tilde{Z}^{\Zar} \cap \bu^{-1}(Y)$.

\begin{thm}\label{ThmASZariskiOptimal}
Let $\tilde{Z}$ be a complex analytic irreducible subset of $\bu^{-1}(Y)$ which is Zariski optimal. Then 
$\tilde{Z}^{\Zar}$ is weakly special.
\end{thm}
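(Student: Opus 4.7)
The plan is to combine the equality part of Theorem~\ref{ThmWASUnivAbVar} with Zariski optimality to force $\tilde{Z}^{\Zar}=\tilde{Z}^{\biZar}$; since $\tilde{Z}^{\biZar}$ is bi-algebraic by construction and bi-algebraic subsets of $\cX^+$ coincide with weakly special ones (the equivalence coming from \cite[Theorem~8.1]{GaoTowards-the-And}), this will conclude the proof.

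First I would use Zariski optimality to show that $\tilde{Z}$ is a complex analytic irreducible component of $\tilde{Z}^{\Zar}\cap\bu^{-1}(\bu(\tilde{Z})^{\Zar})$. If $\tilde{Z}^{*}$ is an irreducible component of this intersection containing $\tilde{Z}$, then $(\tilde{Z}^{*})^{\Zar}=\tilde{Z}^{\Zar}$ and $\tilde{Z}^{*}\subseteq\bu^{-1}(\bu(\tilde{Z})^{\Zar})\subseteq\bu^{-1}(Y)$, so a strict enlargement would force $\delta_{\Zar}(\tilde{Z}^{*})<\delta_{\Zar}(\tilde{Z})$, contradicting Zariski optimality. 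The ``moreover'' in Theorem~\ref{ThmWASUnivAbVar} then yields
\[
\dim\bu(\tilde{Z})^{\Zar}+\dim\tilde{Z}^{\Zar}=\dim\tilde{Z}+\dim\tilde{Z}^{\biZar}.\quad(\star)
\]

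The main step is to establish $\dim\bu(\tilde{Z})^{\Zar}=\dim\tilde{Z}$, which combined with $(\star)$ and $\tilde{Z}^{\Zar}\subseteq\tilde{Z}^{\biZar}$ (containment of irreducibles) collapses the two closures. I would argue by contradiction: suppose $\dim\bu(\tilde{Z})^{\Zar}>\dim\tilde{Z}$, and let $\tilde{V}$ be a complex analytic irreducible component of $\bu^{-1}(\bu(\tilde{Z})^{\Zar})$ containing $\tilde{Z}$, so that $\tilde{V}\supsetneq\tilde{Z}$ and $\tilde{V}\subseteq\bu^{-1}(Y)$; Zariski optimality gives $\delta_{\Zar}(\tilde{V})>\delta_{\Zar}(\tilde{Z})$. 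Since $\bu(\tilde{V})^{\Zar}=\bu(\tilde{Z})^{\Zar}$ and $\tilde{V}$ is already maximal irreducible analytic in $\bu^{-1}(\bu(\tilde{V})^{\Zar})$, the same $\tilde{V}$ is a fortiori a component of the smaller intersection $\tilde{V}^{\Zar}\cap\bu^{-1}(\bu(\tilde{V})^{\Zar})$; the equality case of weak Ax-Schanuel applied to $\tilde{V}$, together with $\dim\bu(\tilde{V})^{\Zar}=\dim\tilde{V}$, produces $\dim\tilde{V}^{\Zar}=\dim\tilde{V}^{\biZar}$, so $\tilde{V}^{\Zar}$ is itself bi-algebraic. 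Rewriting $\delta_{\Zar}(\tilde{V})>\delta_{\Zar}(\tilde{Z})$ via $(\star)$ gives $\dim\tilde{V}^{\Zar}>\dim\tilde{Z}^{\biZar}$. On the other hand, $\bu^{-1}(\bu(\tilde{Z})^{\Zar})\subseteq\bu^{-1}(\bu(\tilde{Z}^{\biZar}))=\bigcup_{\gamma\in\Gamma}\gamma\tilde{Z}^{\biZar}$; irreducibility of $\tilde{V}$ places it inside a single translate $\gamma_{0}\tilde{Z}^{\biZar}$, and then $\tilde{Z}\subseteq\tilde{Z}^{\biZar}\cap\gamma_{0}\tilde{Z}^{\biZar}$ combined with the lemma of $\mathsection$\ref{SubsectionReviewOnBiAlgSystem} (irreducible components of intersections of bi-algebraic subsets remain bi-algebraic) and the minimality of $\tilde{Z}^{\biZar}$ forces $\gamma_{0}\tilde{Z}^{\biZar}=\tilde{Z}^{\biZar}$, so $\tilde{V}^{\Zar}\subseteq\tilde{Z}^{\biZar}$, contradicting the strict dimension inequality.

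The main obstacle I expect is this final confinement of $\tilde{V}$ to the single translate $\tilde{Z}^{\biZar}$ via minimality, together with the careful verification that $\tilde{V}$ is in fact a component of the smaller intersection $\tilde{V}^{\Zar}\cap\bu^{-1}(\bu(\tilde{V})^{\Zar})$ and not merely of $\bu^{-1}(\bu(\tilde{V})^{\Zar})$, since this is exactly what unlocks the equality case of Theorem~\ref{ThmWASUnivAbVar}.
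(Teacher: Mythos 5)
Your proof is correct, and it is essentially the argument that the paper delegates to Habegger--Pila \cite[$\mathsection$5]{HabeggerO-minimality-an} rather than writing out: the paper's ``proof'' of Theorem~\ref{ThmASZariskiOptimal} is a one-line citation, so you have reconstructed the deduction from Theorem~\ref{ThmWASUnivAbVar} yourself. The two key moves you flag as the main obstacles are both fine: Zariski optimality does force $\tilde{Z}$ to be a component of $\tilde{Z}^{\Zar}\cap\bu^{-1}(\bu(\tilde{Z})^{\Zar})$ (a strict enlargement inside a set with the same Zariski closure would lower, or at least not raise, $\delta_{\Zar}$), and the confinement of the irreducible $\tilde{V}$ to a single $\gamma_0\tilde{Z}^{\biZar}$ works because $\Gamma$ is countable, so a Baire-category argument (an irreducible analytic set cannot be a countable union of proper closed analytic subsets) applies; minimality of $\tilde{Z}^{\biZar}$ together with equality of dimensions then gives $\gamma_0\tilde{Z}^{\biZar}=\tilde{Z}^{\biZar}$. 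One minor point worth recording explicitly is that in deriving $(\star)$ you only need the ``if'' direction of the ``Moreover'' in Theorem~\ref{ThmWASUnivAbVar}, and that $\dim\bu(\tilde{Z})^{\Zar}\ge\dim\tilde{Z}$ always holds (since $\bu$ is dimension-preserving), so the only case to rule out is indeed the strict inequality you argue against.
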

\begin{proof} This is equivalent to the weak Ax-Schanuel theorem (Theorem~\ref{ThmWASUnivAbVar}). See \cite[$\mathsection$5]{HabeggerO-minimality-an}.
\end{proof}

By Theorem~\ref{ThmReductionLemma} there exists a Shimura embedding $(P,\cX^+) \hookrightarrow (G_0,\cD^+) \times (P_{2g,D,\mathrm{a}},\cX_{2g,\mathrm{a}}^+)$ for some connected pure Shimura datum $(G_0,\cD^+)$ and some $g\times g$-diagonal matrix $D$. Let $V$ be the unipotent radical of $P$ and let $G$ be $P/V$. Denote by $\tilde{\pi} \colon (P,\cX^+) \rightarrow (G,\cX^+_G)$. Fix a Levi decomposition $P = V \rtimes G$. It induces $\cX^+ \cong V(\R) \times \cX^+_G$. All semi-direct products taken below are assumed to be compatible with this one.

Let $\cT$ be the set of pairs $(V',G')$ consisting of a symplectic (with respect to the non-degenerate alternative form $\Psi$ in \eqref{EqAltForm}) subspace of $V_{\R}$ and a connected subgroup of $G_{\R}$ which is semi-simple and has no compact factors. Let
\[
\cG := \Sp_{2g,D}(\R) \times G(\R).
\]
Then $\cG$ acts on $\cT$ by $(g_V, g) \cdot (V',G') = (g_V V', g G' g^{-1})$. Up to the action of $\cG$ on $\cT$, there exist only finitely many such pairs; see \cite[Lemma~12.3]{GaoTowards-the-And}. Fix $\Omega$ a finite set of representatives.

Finally for an element $\underline{t} = (V',G') \in \cT$ and a point $(\tilde{x}_V,\tilde{x}_G) \in \cX^+ \cong V(\R) \times \cX_G^+$, define
\[
(V',G')(\tilde{x}_V,\tilde{x}_G) = \{(V'(\R) + g'\tilde{x}_V, g' \tilde{x}_G) : g' \in G'(\R)\}.
\]

\subsection{An auxiliary finiteness result}
Fix $\mathfrak{F}$ a fundamental set for $\bu \colon \cX^+ \rightarrow M$ such that $\bu|_{\mathfrak{F}}$ is definable in $\R_{\mathrm{an},\exp}$.

Consider the following definable set
\begin{align*}
\Upsilon = \{(\tilde{y}, \underline{g}, \underline{t}, v) \in (\bu^{-1}(Y) \cap \mathfrak{F}) \times \cG  \times \Omega \times V(\R) :  & ~ g G' g^{-1} \cdot g_V V' \subseteq g_V V', \\
 & ~ \tilde{y}(\S) \subseteq (v,1) \cdot N_P( g_V V' \rtimes g G' g^{-1}) \cdot (-v,1)\},
\end{align*}
where $\S$ is the Deligne torus as in Definition~\ref{DefnMixedShimuraDatum}, $\underline{g} = (g_V, g)$, $\underline{t} = (V',G')$, 
and $N_P(g_V V' \rtimes g G' g^{-1})$ is the normalizer of $g_V V' \rtimes g G' g^{-1}$ in $P_{\R}$.\footnote{The semi-direct product $g_V V' \rtimes g G' g^{-1}$ is well-defined by the assumption 
$g G' g^{-1} \cdot g_V V' \subseteq g_V V'$.}

\begin{lemma}\label{LemmaFinitenessPreliminaryAuxiliarySet}
We have 
\begin{enumerate}
\item[(i)] For any $(\tilde{y}, \underline{g}, \underline{t}, v) \in \Upsilon$, we have that $(\underline{g}\cdot \underline{t}) \tilde{y}$ is complex analytic (and hence complex algebraic since it is also semi-algebraic).
\item[(ii)] Any weakly special subset having non-empty intersection with $\bu^{-1}(Y) \cap \mathfrak{F}$ is of the form $(\underline{g} \cdot \underline{t})\tilde{y}$ for some $(\tilde{y}, \underline{g}, \underline{t}, v) \in \Upsilon$.
\end{enumerate}
\end{lemma}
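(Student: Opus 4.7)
The plan is to handle the two parts separately, with both relying on the correspondence between weakly special subsets and orbits of certain real subgroups under which the Hodge structure at a base point is stable.

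For part (i), I set $H := g_V V' \rtimes gG'g^{-1}$, which is a real algebraic subgroup of $P_{\R}$ that is well-defined as a semi-direct product by the imposed condition $gG'g^{-1}\cdot g_V V' \subseteq g_V V'$. Unpacking the definition, $(\underline{g}\cdot\underline{t})\tilde{y}$ is the orbit $H(\R)\cdot\tilde{y}$ for the restriction of the $P(\R)^+$-action on $\cX^+$. Semi-algebraicity is immediate: it is the image of the real algebraic map $H(\R)\to\cX^+$, $h\mapsto h\cdot\tilde{y}$, and the realization $\cX^+\hookrightarrow \cX^\vee$ of Proposition~\ref{PropositionRealizationOfX} is semi-algebraic. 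For complex analyticity, the condition $\tilde{y}(\S)\subseteq (v,1)N_P(H)(-v,1)=N_P((v,1)H(-v,1))$ is equivalent to saying that the translated morphism $\tilde{y}':=(-v,1)\cdot\tilde{y}$ factors through $N_P(H)$. Since $H\lhd N_P(H)$, this implies that the mixed Hodge structure at $\tilde{y}'$ preserves $\lie H$, exactly as in the standard argument that weakly special subsets are complex analytic (cf.~$\mathsection$\ref{SubsectionWeaklySpecial}). Translating by the biholomorphism given by $(v,1)\in P(\R)^+$ and matching it to the explicit formula for the orbit yields complex analyticity of $(\underline{g}\cdot\underline{t})\tilde{y}$. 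Finally, a closed complex analytic and semi-algebraic subset of the complex algebraic variety $\cX^\vee$ is algebraic by Peterzil--Starchenko's o-minimal Chow \cite[Theorem~4.5]{PeterzilComplex-analyti}, so $(\underline{g}\cdot\underline{t})\tilde{y}$ is complex algebraic.

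For part (ii), let $\tilde{W}\subseteq \cX^+$ be a weakly special subset with $\tilde{W}\cap \bu^{-1}(Y)\cap \mathfrak{F}\neq \emptyset$. By $\mathsection$\ref{SubsectionWeaklySpecial}, write $\tilde{W}=N(\R)^+\tilde{y}$ for some connected mixed Shimura subdatum $(Q,\cY^+)\subseteq (P,\cX^+)$ of Kuga type, some $N\lhd Q^{\der}$, and some $\tilde{y}\in \tilde{W}\cap \bu^{-1}(Y)\cap \mathfrak{F}$ (by transitivity of the $N(\R)^+$-action on $\tilde{W}$). Fix a Levi decomposition of $Q$ compatible with the Shimura embedding of Theorem~\ref{ThmReductionLemma}, and set $V_N:=N\cap V$ and $G_N:=N/V_N$. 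Then $V_N(\R)$ is a symplectic subspace of $V_{2g}(\R)$ and $G_N(\R)$ is a connected semi-simple subgroup of $G(\R)$ without compact factors by Definition~\ref{DefnMixedShimuraDatum}(iii), so $(V_N(\R),G_N(\R))\in \cT$. By the finiteness of $\cT/\cG$ and the choice of $\Omega$, there exist $(V',G')\in \Omega$ and $\underline{g}=(g_V,g)\in \cG$ with $(V_N(\R),G_N(\R))=(g_V V'(\R),gG'(\R)g^{-1})$. The shift $v\in V(\R)$ is determined by the discrepancy between the fixed Levi of $P$ and the Levi used for $Q$: since $\tilde{y}(\S)\subseteq Q\subseteq N_P(N)$ and the two Levi decompositions differ by conjugation by a unipotent element $(v,1)\in V(\R)$, this $v$ realizes the normalizer condition $\tilde{y}(\S)\subseteq (v,1)N_P(H)(-v,1)$ with $H=g_V V'\rtimes gG'g^{-1}$. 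A direct verification then shows $\tilde{W}=N(\R)^+\tilde{y}=(\underline{g}\cdot\underline{t})\tilde{y}$.

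The main technical difficulty is the bookkeeping of the unipotent shift $v$ in both parts. In the pure case $V=0$ this shift is absent and both assertions reduce quickly to statements about conjugacy classes of reductive subgroups; in the Kuga-type setting, one must track how the Levi decomposition of $P$ relates to that of an arbitrary Shimura subdatum $(Q,\cY^+)$ and carefully verify that the normalizer condition in $\Upsilon$ is preserved after the unipotent conjugation. Neither part introduces conceptually new input beyond results already cited (notably Theorem~\ref{ThmReductionLemma}, Proposition~\ref{PropositionRealizationOfX}, and the finiteness of $\cT/\cG$ from \cite[Lemma~12.3]{GaoTowards-the-And}); the work lies in the explicit matching between the group-theoretic data and the geometric orbit.
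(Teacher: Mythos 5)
Your proof follows essentially the same route as the paper for both parts: in (i) you identify $(\underline{g}\cdot\underline{t})\tilde{y}$ with the orbit of $H = g_V V'\rtimes gG'g^{-1}$, pass to the shifted point $\tilde{y}'=(-v,1)\cdot\tilde{y}$ so that $\tilde{y}'(\S)\subseteq N_P(H)$, deduce complex analyticity of $H(\R)\cdot\tilde{y}'$, and translate back (the paper phrases this translation as addition of the constant section $\{v\}\times\cX_G^+$, which is the same biholomorphism); in (ii) you write the weakly special subset as $N(\R)^+\tilde{y}$, split $N$ into $(V_N,G_N)$, invoke the finiteness of $\cT/\cG$ and the representatives $\Omega$, and observe that the unipotent shift $v$ records the failure of the Levi of $N$ to lie in the fixed Levi $G$ of $P$.

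There is, however, a genuine error in your part (ii). You claim that $(V_N(\R),G_N(\R))\in\cT$ and justify that $G_N(\R)$ ``is a connected semi-simple subgroup of $G(\R)$ without compact factors by Definition~\ref{DefnMixedShimuraDatum}(iii).'' That condition concerns the ambient group: it says $G^{\mathrm{ad}}$ has no $\Q$-simple factor $H$ with $H(\R)$ compact. It says nothing about an arbitrary $\Q$-subgroup $G_N$, and in fact $G_{N,\R}$ may very well have compact simple $\R$-factors. Since the second entry of a pair in $\cT$ is required to have no compact factors, $(V_N(\R),G_N(\R))$ need not lie in $\cT$, and your appeal to the finiteness of $\cT/\cG$ and the choice of $\Omega$ does not apply to it. The paper avoids this by taking $\underline{g}\cdot\underline{t}=(V_{N,\R},G_{N,\R}^{\mathrm{nc}})$, where $G_{N,\R}^{\mathrm{nc}}$ is the almost direct product of the non-compact simple factors of $G_{N,\R}^+$; this element does lie in (a $\cG$-orbit represented in) $\Omega$, and it gives the same orbit $N(\R)^+\tilde{y}$ because the compact factors are absorbed into the stabilizer. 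You should replace $G_N(\R)$ by $G_{N,\R}^{\mathrm{nc}}$ throughout step (ii) and drop the incorrect appeal to Definition~\ref{DefnMixedShimuraDatum}(iii).
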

\begin{proof}
\begin{enumerate}
\item[(i)] The set $(\underline{g}\cdot \underline{t}) \tilde{y}$ is
\[
\{(g_V V'(\R) + g g' g^{-1} \tilde{y}_V, g g' g^{-1} \tilde{y}_G) : g' \in G'(\R)\},
\]
where $\tilde{y} = (\tilde{y}_V,\tilde{y}_G)$ under $\cX^+ \cong V(\R) \times \cX_G^+$. Denote by $\tilde{y}' = (\tilde{y}_V - v, \tilde{y}_G)$. Then $(\underline{g}\cdot \underline{t}) \tilde{y}$ is the translate of the group orbit $(g_V V' \rtimes g G' g^{-1})(\R)\cdot \tilde{y}'$ by the constant section $\{v\}\times \cX_G^+$. The group orbit is complex analytic because $\tilde{y}'(\S) \subseteq N_P(g_V V' \rtimes g G' g^{-1})$. The constant section is complex analytic by the discussion in $\mathsection$\ref{SubsectionHodgeTheoreticX2ga} (between \eqref{EquationBettiFirstStep} and \eqref{EqTwoComplexStrOfX2g}). Hence $(\underline{g}\cdot \underline{t}) \tilde{y}$ is complex analytic.
\item[(ii)] Let $\tilde{X}$ be such a weakly special subset. Then there exist a connected mixed Shimura subdatum of Kuga type $(Q,\cY^+)$ of $(P,\cX^+)$, a normal subgroup $N$ of $Q$ whose reductive part is semi-simple, and a point $\tilde{y} \in \cY^+$ such that $\tilde{X} = N(\R)^+\tilde{y}$. We can take $\tilde{y} \in \bu^{-1}(Y) \cap \mathfrak{F}$. Then it suffices to take $\underline{g}\cdot \underline{t}$ to be $(V_{N,\R}, G_{N,\R}^{\mathrm{nc}})$, where $V_N = V \cap N$, $G_N = N/V_N$ and $G_{N,\R}^{\mathrm{nc}}$ is the product of the non-compact simple factors of $G_{N,\R}^+$. The element $v \in V(\R)$ appears because $N = (v,1)(V_N \rtimes G_N)(-v,1)$ for some $v \in V(\Q)$.\footnote{Here $V_N \rtimes G_N$ is defined to be compatible with the fixed Levi decomposition $P = V\rtimes G$. It may differ from $N$ as $\{0\}\rtimes G_N$ may not be contained in $N$.} \qedhere
\end{enumerate}
\end{proof}

From now on, all the dimensions are real dimensions. Define the following functions on $(\bu^{-1}(Y) \cap \mathfrak{F}) \times \cG  \times \Omega \times V(\R)$:
\[
\begin{array}{c}
d(\tilde{y}, \underline{g}, \underline{t}, v) = \dim_{\tilde{y}}\left( (\underline{g}\cdot \underline{t}) \tilde{y}  \right) = \dim_{\tilde{y}}\left( (g_V V', gG'g^{-1}) \tilde{y}  \right), \\
d_Y(\tilde{y}, \underline{g}, \underline{t}, v) = \dim_{\tilde{y}}\left( \bu^{-1}(Y) \cap \mathfrak{F} \cap (\underline{g}\cdot \underline{t}) \tilde{y}  \right) = \dim_{\tilde{y}}\left( \bu^{-1}(Y) \cap \mathfrak{F} \cap (g_V V', gG'g^{-1})  \tilde{y}  \right).
\end{array}
\]
Define 
\begin{align*}
\Xi_0 = \{ (\tilde{y}, \underline{g}, \underline{t}, v) \in \Upsilon : & ~ (\tilde{y}, \underline{g}_1, \underline{t}_1, v_1) \in \Upsilon,~ (\underline{g}\cdot \underline{t}) \tilde{y} \subsetneq (\underline{g}_1\cdot \underline{t}_1) \tilde{y} \\
& ~ \Rightarrow d(\tilde{y}, \underline{g}, \underline{t}, v) - d_Y(\tilde{y}, \underline{g}, \underline{t}, v) < d(\tilde{y}, \underline{g}_1, \underline{t}_1, v_1) - d_Y(\tilde{y}, \underline{g}_1, \underline{t}_1, v_1) \}.
\end{align*}
Finally define 
\[
\Xi = \{ (\tilde{y}, \underline{g}, \underline{t}, v) \in \Xi_0 : ~ (\tilde{y}, \underline{g}_1, \underline{t}_1, v_1) \in \Upsilon, ~ (\underline{g}\cdot \underline{t}) \tilde{y} \supsetneq (\underline{g}_1\cdot \underline{t}_1) \tilde{y}  \Rightarrow d_Y(\tilde{y}, \underline{g}, \underline{t}, v) > d_Y(\tilde{y}, \underline{g}_1, \underline{t}_1, v_1) \}.
\]
Then both $\Xi_0$ and $\Xi$ are definable.

\begin{lemma}\label{LemmagtFinite}
The set of pairs $\{(\underline{g}\cdot \underline{t}, v) : ~ (\tilde{y}, \underline{g}, \underline{t}, v) \in \Xi \}$ is finite.
\end{lemma}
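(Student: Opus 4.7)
I would begin by verifying that $\Xi$ is definable in $\R_{\mathrm{an},\exp}$: the map $\bu|_{\mathfrak{F}}$ is definable by the choice in $\mathsection$\ref{SubsectionFundamentalSet}, and all other conditions defining $\Upsilon$, $\Xi_0$, and $\Xi$ (including the fibre-dimension functions $d(\cdot)$ and $d_Y(\cdot)$) are semi-algebraic or definable in terms of $\bu|_{\mathfrak{F}}$; hence the image of $\Xi$ under the projection $(\tilde{y}, \underline{g}, \underline{t}, v) \mapsto (\underline{g}\cdot\underline{t}, v)$ is definable. Since $\Omega$ is finite, I would fix $\underline{t} \in \Omega$ and aim to prove that the resulting image in $\cG \times V(\R)$, modulo the obvious stabiliser equivalence, is finite; by o-minimality this reduces to showing it has dimension zero.

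Suppose for a contradiction it has positive dimension. By o-minimal cell decomposition and definable choice I would extract a definable arc $\gamma(s) = (\tilde{y}(s), \underline{g}(s), \underline{t}, v(s)) \in \Xi$, $s \in (0,1)$, whose projection to $\cG \times V(\R)$ is non-constant. For each $s$, Lemma~\ref{LemmaFinitenessPreliminaryAuxiliarySet}(i) shows that $F(s) := (\underline{g}(s)\cdot\underline{t})\tilde{y}(s)$ is a weakly special subset of $\cX^+$ through $\tilde{y}(s) \in \bu^{-1}(Y)$. Let $\tilde{Z}(s) \subseteq F(s) \cap \bu^{-1}(Y)$ be a complex analytic irreducible component through $\tilde{y}(s)$ of dimension $d_Y(\gamma(s))$. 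The $\Xi_0$ condition (minimality of $F(s)$ among weakly specials containing such a germ of this local dimension) combined with Theorem~\ref{ThmASZariskiOptimal} should then force $\tilde{Z}(s)^{\Zar} = F(s)$ and make $\tilde{Z}(s)$ Zariski optimal in $\bu^{-1}(Y)$.

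Fixing a generic $s_0$, I would argue that the non-constancy of $s \mapsto (\underline{g}(s)\cdot\underline{t}, v(s))$ forces $F(s) \neq F(s_0)$ for $s$ near, but distinct from, $s_0$; since distinct weakly specials cannot share a complex analytic irreducible subset of dimension $\dim \tilde{Z}(s_0)$ through $\tilde{y}(s_0)$, the germs $\tilde{Z}(s)$ sweep out a set strictly larger than $\tilde{Z}(s_0)$. Complexifying the real parameter via Peterzil--Starchenko's o-minimal Chow theorem (cf.~\cite[Theorem~4.5]{PeterzilComplex-analyti}), I would obtain a complex analytic irreducible subset $\tilde{W} \subseteq \bu^{-1}(Y)$ with $\tilde{W} \supsetneq \tilde{Z}(s_0)$ and $\dim \tilde{W} \geq \dim \tilde{Z}(s_0) + 1$. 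Since $\tilde{W}$ lies inside $\bigl(\bigcup_s F(s)\bigr)^{\Zar}$, whose complex dimension is at most $\dim F(s_0) + 1$ (as $\{F(s)\}$ is a one-parameter family of complex varieties), I would get
\begin{equation*}
\delta_{\Zar}(\tilde{W}) \leq (\dim F(s_0) + 1) - (\dim \tilde{Z}(s_0) + 1) = \delta_{\Zar}(\tilde{Z}(s_0)),
\end{equation*}
contradicting the Zariski optimality of $\tilde{Z}(s_0)$ and thereby finishing the argument.

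The main obstacle will be the complexification step just described: producing a complex analytic $\tilde{W}$ of complex dimension strictly greater than $\dim \tilde{Z}(s_0)$ from the real one-parameter family $\{\tilde{Z}(s)\}$. The key input is that the weakly specials $F(s)$ themselves vary genuinely with $s$ (not only via translations along a fixed weakly special), so their union picks up an additional complex direction rather than staying within a single $F(s_0)$; together with the semi-algebraicity of the foliation of $\cX^+$ by weakly specials of a given type and Peterzil--Starchenko, this should yield the required complex analytic enlargement. This mirrors the analogous step in \cite[Proposition~3.3]{DawRenAppOfAS} in the pure case.
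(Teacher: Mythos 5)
Your proof takes a genuinely different route from the paper's, but the crucial step has a real gap.

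The paper's argument for finiteness is short and relies on a rigidity observation you bypass. Once one knows $(\underline{g}\cdot\underline{t})\tilde{y}$ is weakly special (your first two paragraphs do establish the right version of this, matching the paper's opening step), one reads off that $\underline{g}\cdot\underline{t} = (V_{N,\R}, G_{N,\R}^{\mathrm{nc}})$ for a $\Q$-subgroup $N$ of $P$ and $v \in V(\Q)$. There are only countably many $\Q$-subgroups and countably many rational $v$, so the set $\{(\underline{g}\cdot\underline{t}, v) : (\tilde{y},\underline{g},\underline{t},v)\in\Xi\}$ is \emph{countable}. It is also definable by the projection argument you sketch. A countable definable set in an o-minimal structure is finite — that is the whole proof. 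In particular this \emph{is} Daw--Ren's argument; the ``complexification'' step you attribute to \cite[Proposition~3.3]{DawRenAppOfAS} does not appear there.

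Your alternative route — suppose the image has positive dimension, extract a definable arc $s\mapsto (\underline{g}(s)\cdot\underline{t}, v(s))$, and derive a contradiction — has a gap exactly where you flag it. First, Peterzil--Starchenko's o-minimal Chow theorem turns a \emph{definable closed complex analytic subset} into an algebraic one; it does not manufacture a complex analytic $\tilde{W}$ out of a real one-parameter union $\bigcup_s\tilde{Z}(s)$, which is a priori only a real $(2\dim\tilde{Z}(s_0)+1)$-dimensional set and need not be contained in any complex analytic set of complex dimension $\dim\tilde{Z}(s_0)+1$ inside $\bu^{-1}(Y)$. Second, the claim that $\bigl(\bigcup_s F(s)\bigr)^{\Zar}$ has complex dimension at most $\dim F(s_0)+1$ is unjustified: the arc $s\mapsto F(s)$ in the Hilbert scheme is definable in $\R_{\mathrm{an},\exp}$ but need not be semi-algebraic, and the Zariski closure of a non-algebraic real arc can have large complex dimension. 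Trying to repair this by invoking the rigidity of weakly specials — that they are parametrized by countable $\Q$-data, so the family $F(s)$ must be locally constant — collapses back into the countability argument, which is the cleaner way through.
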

\begin{proof}
We start by proving that $(\underline{g}\cdot \underline{t})\tilde{y}$ is weakly special for any $(\tilde{y}, \underline{g}, \underline{t}, v) \in \Xi$. Let $\tilde{Z}$ be the complex analytic irreducible component of $(\underline{g}\cdot \underline{t})\tilde{y} \cap \bu^{-1}(Y)$ passing through $\tilde{y}$ such that
\[
\dim \tilde{Z} = d_Y(\tilde{y}, \underline{g}, \underline{t}, v).
\]

Let $\tilde{Z}' \supseteq \tilde{Z}$ be such that $\tilde{Z}' \subseteq \bu^{-1}(Y)$ is complex analytic irreducible and 
$\delta_{\Zar}(\tilde{Z}') \le \delta_{\Zar}(\tilde{Z})$. We may and do assume that $\tilde{Z}'$ is Zariski optimal. Then $\tilde{Z}'$ is a complex analytic irreducible component of $(\tilde{Z}')^{\Zar} \cap \bu^{-1}(Y)$, and $(\tilde{Z}')^{\Zar} = (\underline{g}_1\cdot \underline{t}_1)\tilde{y}$ for some $(\tilde{y}, \underline{g}_1, \underline{t}_1, v_1) \in \Upsilon$ by Theorem~\ref{ThmASZariskiOptimal} and Lemma~\ref{LemmaFinitenessPreliminaryAuxiliarySet}.(ii). Here the $\tilde{y}$ can be taken as above.

Now we have $\tilde{Z} \subseteq (\underline{g}\cdot \underline{t})\tilde{y} \cap (\underline{g}_1\cdot \underline{t}_1)\tilde{y}$, and hence $\tilde{Z} \subseteq (\underline{g}_2\cdot \underline{t}_2)\tilde{y}$ for some $(\tilde{y}, \underline{g}_2, \underline{t}_2, v_2) \in \Upsilon$ with $(\underline{g}_2\cdot \underline{t}_2)\tilde{y} \subseteq (\underline{g}\cdot \underline{t})\tilde{y}$.\footnote{The point $\tilde{y}$ can be taken as before since $\tilde{y} \in \tilde{Z}$. Then $(\underline{g}_2\cdot \underline{t}_2, v_2)$ arises from the intersection of the two subgroups $(v,1)(g_V V' \rtimes g G' g^{-1})(-v,1)$ and $(v_1,1)(g_{V,1} V'_1 \rtimes g_1 G'_1 g_1^{-1})(-v_1,1)$ of $P$.} By definition of $\Xi$, we then have $(\underline{g}_2\cdot \underline{t}_2)\tilde{y} = (\underline{g}\cdot \underline{t})\tilde{y}$. Hence
\[
(\underline{g}\cdot \underline{t})\tilde{y} \subseteq (\underline{g}_1\cdot \underline{t}_1)\tilde{y}.
\]

On the other hand we have
\begin{align*}
d(\tilde{y}, \underline{g}_1, \underline{t}_1, v_1) - d_Y(\tilde{y}, \underline{g}_1, \underline{t}_1, v_1) & = \dim(\tilde{Z}')^{\Zar} - \dim_{\tilde{y}}((\tilde{Z}')^{\Zar} \cap \bu^{-1}(Y) \cap \mathfrak{F}) \\
& \le \delta_{\Zar}(\tilde{Z}') \le \delta_{\Zar}(\tilde{Z}) \\
& \le d(\tilde{y}, \underline{g}, \underline{t}, v) - d_Y(\tilde{y}, \underline{g}, \underline{t}, v).
\end{align*}
Hence by definition of $\Xi_0$, we have $(\underline{g}\cdot \underline{t})\tilde{y} = (\underline{g}_1\cdot \underline{t}_1)\tilde{y} = (\tilde{Z}')^{\Zar}$ is weakly special.

Thus by the definition of weakly special subvarieties, we have $\underline{g}\cdot \underline{t} = (V_{N,\R}, G_{N,\R}^{\mathrm{nc}})$ for some $\Q$-subgroup $N$ of $P$, where $V_N$ is the unipotent radical of $N$, and $G_{N,\R}^{\mathrm{nc}}$ is the almost direct product of the non-compact factors of $G_{N,\R}^+$. We also obtain some $v \in V(\Q)$; see the proof of Lemma~\ref{LemmaFinitenessPreliminaryAuxiliarySet}.(ii). Therefore the set $\{ (\underline{g}\cdot \underline{t}, v) : ~ (\tilde{y}, \underline{g}, \underline{t}, v) \in \Xi \}$ is countable.

On the other hand write $\Omega = \{\underline{t}_1,\ldots,\underline{t}_n\}$. Then we have $\Xi = \bigcup_{i=1}^n \Xi_i$, where $\Xi_i = \{(\tilde{y}, \underline{g}, \underline{t}, v) \in \Xi:~ \underline{t} = \underline{t}_i\}$. For each $i \in \{1,\ldots,n\}$, consider the map
\[
\Xi_i \rightarrow \left(\Sp_{2g}(\R)/\mathrm{Stab}_{\Sp_{2g}(\R)}(V'_i)\right) \times \left(G(\R)/N_{G(\R)}(G'_i) \right) \times V(\R) , \quad (\tilde{y}, \underline{g}, \underline{t}_i, v) \mapsto (g_V V'_i, g G'_i g^{-1}, v)
\]
where we write $\underline{t}_i = (V'_i, G'_i)$ and $\underline{g} = (g_V, g)$. This map is definable, and hence the image is definable. But its image is $\{(\underline{g}\cdot \underline{t}, v) : ~ (\tilde{y}, \underline{g}, \underline{t}, v) \in \Xi_i \}$. Thus
\[
\{(\underline{g}\cdot \underline{t}, v) : ~ (\tilde{y}, \underline{g}, \underline{t}, v) \in \Xi \} = \bigcup_{i=1}^n \{ (\underline{g}\cdot \underline{t}, v) : ~ (\tilde{y}, \underline{g}, \underline{t}, v) \in \Xi_i \}
\]
is definable. Hence this set is finite because it is countable and definable.
\end{proof}

\begin{lemma}\label{LemmaZariskiOptimalComeFromTriple}
Let $\tilde{Z} \subseteq \bu^{-1}(Y)$ be Zariski optimal such that $\tilde{Z} \cap \mathfrak{F} \not= \emptyset$. Then we have
\[
\tilde{Z}^{\Zar} = (\underline{g}\cdot \underline{t})\tilde{y}
\]
for some $(\tilde{y}, \underline{g}, \underline{t}, v) \in \Xi$.
\end{lemma}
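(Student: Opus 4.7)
The plan is to first exhibit a tuple in $\Upsilon$ representing $\tilde{Z}^{\Zar}$, and then upgrade it to one in $\Xi$ via an extremal-selection argument driven by the Zariski optimality of $\tilde{Z}$.

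First I would invoke Theorem~\ref{ThmASZariskiOptimal}: the Zariski optimality of $\tilde{Z}$ forces $\tilde{Z}^{\Zar}$ to be weakly special. Choose $\tilde{y}\in\tilde{Z}\cap\mathfrak{F}$ at a generic point, smooth on $\tilde{Z}$ and lying on no complex analytic component of $\tilde{Z}^{\Zar}\cap\bu^{-1}(Y)$ other than $\tilde{Z}$ itself; such $\tilde{y}$ exists because the locus where either condition fails is a proper complex analytic subset of $\tilde{Z}$. Applying Lemma~\ref{LemmaFinitenessPreliminaryAuxiliarySet}(ii), there is $(\tilde{y},\underline{g},\underline{t},v)\in\Upsilon$ with $(\underline{g}\cdot\underline{t})\tilde{y}=\tilde{Z}^{\Zar}$, and for this $\tilde{y}$ we have $d_Y(\tilde{y},\underline{g},\underline{t},v)=\dim\tilde{Z}$, hence $d-d_Y=\delta_{\Zar}(\tilde{Z})$.

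Next, I would produce a tuple in $\Xi_0$ as follows. Among all $(\tilde{y},\underline{g}',\underline{t}',v')\in\Upsilon$ with the same $\tilde{y}$ and $(\underline{g}'\cdot\underline{t}')\tilde{y}\supseteq\tilde{Z}^{\Zar}$, select $(\tilde{y},\underline{g}^*,\underline{t}^*,v^*)$ minimizing $d'-d'_Y$ and among such tuples maximizing $d'$; both extrema exist since the quantities are bounded non-negative integers. Then this tuple lies in $\Xi_0$: any strictly larger $\tilde{W}_1:=(\underline{g}_1\cdot\underline{t}_1)\tilde{y}$ with $(\tilde{y},\underline{g}_1,\underline{t}_1,v_1)\in\Upsilon$ is itself a candidate in the selection, so the min-then-max choice forces $d_1-d_{Y,1}>d^*-d^*_Y$. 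To conclude $\tilde{W}^*:=(\underline{g}^*\cdot\underline{t}^*)\tilde{y}=\tilde{Z}^{\Zar}$, I argue by contradiction: if $\tilde{W}^*\supsetneq\tilde{Z}^{\Zar}$, the maximum-dimensional complex analytic component $\tilde{Z}^*$ of $\tilde{W}^*\cap\bu^{-1}(Y)$ through $\tilde{y}$ satisfies $\dim\tilde{Z}^*>\dim\tilde{Z}$ and $\delta_{\Zar}(\tilde{Z}^*)\le d^*-d^*_Y\le\delta_{\Zar}(\tilde{Z})$; when $\tilde{Z}^*\supseteq\tilde{Z}$ this contradicts Zariski optimality, and otherwise one applies the same reasoning to the component of $\tilde{W}^*\cap\bu^{-1}(Y)$ through $\tilde{y}$ containing $\tilde{Z}$.

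Finally, for the promotion from $\Xi_0$ to $\Xi$, suppose some $(\tilde{y},\underline{g}_1,\underline{t}_1,v_1)\in\Upsilon$ has $\tilde{W}_1:=(\underline{g}_1\cdot\underline{t}_1)\tilde{y}\subsetneq\tilde{Z}^{\Zar}$ and $d_{Y,1}\ge d_Y$. The maximum-dimensional component $\tilde{Z}_1$ of $\tilde{W}_1\cap\bu^{-1}(Y)$ through $\tilde{y}$ has $\dim\tilde{Z}_1\ge\dim\tilde{Z}$ and $\tilde{Z}_1^{\Zar}\subseteq\tilde{W}_1\subsetneq\tilde{Z}^{\Zar}$, whence $\delta_{\Zar}(\tilde{Z}_1)<\delta_{\Zar}(\tilde{Z})$. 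Enlarging $\tilde{Z}_1$ to a Zariski optimal $\tilde{Z}_1^{\sharp}\supseteq\tilde{Z}_1$ in $\bu^{-1}(Y)$, one has $\delta_{\Zar}(\tilde{Z}_1^{\sharp})\le\delta_{\Zar}(\tilde{Z}_1)<\delta_{\Zar}(\tilde{Z})$, and Theorem~\ref{ThmASZariskiOptimal} makes $(\tilde{Z}_1^{\sharp})^{\Zar}$ weakly special of dimension strictly less than $\dim\tilde{Z}^{\Zar}$; a direct comparison then contradicts the Zariski optimality of $\tilde{Z}$. The principal obstacle I anticipate is the case in the third paragraph where the maximum-dimensional component $\tilde{Z}^*$ does not contain $\tilde{Z}$; addressing this cleanly likely requires a sharper generic-point choice or an iterative refinement of the extremal selection that simultaneously controls every complex analytic component of $\tilde{W}^*\cap\bu^{-1}(Y)$ passing through $\tilde{y}$.
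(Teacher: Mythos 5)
Your overall strategy differs from the paper's: instead of directly verifying that the tuple $(\tilde{y},\underline{g},\underline{t},v)$ with $(\underline{g}\cdot\underline{t})\tilde{y}=\tilde{Z}^{\Zar}$ lies in $\Xi$, you perform an extremal selection over all larger weakly special sets and then try to show the extremal candidate $\tilde{W}^*$ coincides with $\tilde{Z}^{\Zar}$. This re-routes the difficulty rather than resolving it, and both of the places where you defer the work contain genuine gaps.

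In your second step, the claim $\dim\tilde{Z}^*>\dim\tilde{Z}$ is not justified and is false in general: enlarging $\tilde{W}^*$ beyond $\tilde{Z}^{\Zar}$ need not increase the local dimension of $\tilde{W}^*\cap\bu^{-1}(Y)$ at $\tilde{y}$ at all. You also have no control over whether $\tilde{Z}^*\supseteq\tilde{Z}$, which you flag yourself. In the third step, the assertion that $(\tilde{Z}_1^{\sharp})^{\Zar}$ has dimension strictly less than $\dim\tilde{Z}^{\Zar}$ does not follow: enlarging $\tilde{Z}_1$ to a Zariski optimal $\tilde{Z}_1^{\sharp}$ can only enlarge the Zariski closure, and the phrase ``a direct comparison then contradicts'' does not produce a contradiction, since $\tilde{Z}_1^{\sharp}$ need not be comparable to $\tilde{Z}$. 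You are also conflating real and complex dimensions: the quantities $d$ and $d_Y$ are real dimensions in the paper's setup, so the identity is $d-d_Y=2\delta_{\Zar}(\tilde{Z})$, not $d-d_Y=\delta_{\Zar}(\tilde{Z})$.

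The missing idea is a local intersection-dimension argument that forces a comparability you are lacking. For membership in $\Xi_0$, the paper takes the tuple with $(\underline{g}\cdot\underline{t})\tilde{y}=\tilde{Z}^{\Zar}$, supposes a larger $(\underline{g}_1\cdot\underline{t}_1)\tilde{y}$ has defect $d_1-d_{Y,1}\le d-d_Y$, picks a component $\tilde{Z}'$ of $(\underline{g}_1\cdot\underline{t}_1)\tilde{y}\cap\bu^{-1}(Y)$ through $\tilde{y}$ of dimension $d_{Y,1}$, and applies the Dimension Intersection Inequality to $\tilde{Z}'\cap\tilde{Z}^{\Zar}$ inside the ambient $(\underline{g}_1\cdot\underline{t}_1)\tilde{y}$; this yields $\dim_{\tilde{y}}(\tilde{Z}'\cap\tilde{Z}^{\Zar})\ge\dim\tilde{Z}$, hence $\tilde{Z}\subseteq\tilde{Z}'$, so Zariski optimality forces $\tilde{Z}=\tilde{Z}'$ and then a count of real codimensions gives the contradiction. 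For membership in $\Xi$, the point is simpler still: if a strictly smaller $(\underline{g}_1\cdot\underline{t}_1)\tilde{y}$ satisfied $d_{Y,1}\ge d_Y$, then with $\tilde{y}$ chosen so that $\dim\tilde{Z}=\dim_{\tilde{y}}(\tilde{Z}^{\Zar}\cap\bu^{-1}(Y)\cap\mathfrak{F})$ one gets $\tilde{Z}\subseteq(\underline{g}_1\cdot\underline{t}_1)\tilde{y}\subsetneq\tilde{Z}^{\Zar}$, directly contradicting the minimality of the Zariski closure (and complex analyticity of $(\underline{g}_1\cdot\underline{t}_1)\tilde{y}$, via Lemma~\ref{LemmaFinitenessPreliminaryAuxiliarySet}(i)). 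Your extremal-selection scaffolding does not replace these two steps; it only postpones them to the moment you must prove $\tilde{W}^*=\tilde{Z}^{\Zar}$, which you correctly identify as the unresolved obstacle.
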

\begin{proof} 
By Theorem~\ref{ThmASZariskiOptimal}, $\tilde{Z}^{\Zar}$ is weakly special. Hence part (ii) of Lemma~\ref{LemmaFinitenessPreliminaryAuxiliarySet} implies
\[
\tilde{Z}^{\Zar} = (\underline{g}\cdot \underline{t})\tilde{y}
\]
for some $(\tilde{y}, \underline{g}, \underline{t}, v) \in \Upsilon$. Moreover we may take $\tilde{y} \in \tilde{Z}$ such that $\dim \tilde{Z} = \dim_{\tilde{y}}(\tilde{Z}^{\Zar} \cap \bu^{-1}(Y) \cap \mathfrak{F})$. We wish to prove that $(\tilde{y}, \underline{g}, \underline{t}, v) \in \Xi$.

We start by proving $(\tilde{y}, \underline{g}, \underline{t}, v) \in \Xi_0$. Suppose not, then there exists $(\tilde{y}, \underline{g}_1, \underline{t}_1, v_1) \in \Upsilon$ such that $(\underline{g}\cdot \underline{t})\tilde{y} \subsetneq (\underline{g}_1\cdot \underline{t}_1)\tilde{y}$ and
\begin{equation}\label{EqDimensionInequalityForFiniteness}
d(\tilde{y}, \underline{g}, \underline{t}, v) - d_Y(\tilde{y}, \underline{g}, \underline{t}, v) \ge d(\tilde{y}, \underline{g}_1, \underline{t}_1, v_1) - d_Y(\tilde{y}, \underline{g}_1, \underline{t}_1, v_1).
\end{equation}
Let $\tilde{Z}'$ be a complex analytic irreducible component of $(\underline{g}_1\cdot \underline{t}_1)\tilde{y} \cap \bu^{-1}(Y)$ passing through $\tilde{y}$ such that $\dim \tilde{Z}' = d_Y(\tilde{y}, \underline{g}_1, \underline{t}_1, v_1)$. Then
\begin{align*}
\dim_{\tilde{y}}(\tilde{Z}' \cap \tilde{Z}^{\Zar}) & \ge \dim \tilde{Z}' + \dim \tilde{Z}^{\Zar} - d(\tilde{y}, \underline{g}_1, \underline{t}_1, v_1) \quad \text{ by the Dimension Intersection Inequality} \\
& \ge \dim \tilde{Z} \qquad \text{ by \eqref{EqDimensionInequalityForFiniteness}}.
\end{align*}
Thus $\tilde{Z}'$ contains a neighborhood of $\tilde{y}$ in $\tilde{Z}$, and hence $\tilde{Z} \subseteq \tilde{Z}'$. But $\tilde{Z}$ is Zariski optimal and $\delta_{\Zar}(\tilde{Z}') \le \delta_{\Zar}(\tilde{Z})$ by \eqref{EqDimensionInequalityForFiniteness}, so $\tilde{Z} = \tilde{Z}'$. But then we get the following contradiction to \eqref{EqDimensionInequalityForFiniteness}:
\[
d(\tilde{y}, \underline{g}_1, \underline{t}_1, v_1) - d_Y(\tilde{y}, \underline{g}_1, \underline{t}_1, v_1) \ge 2 \delta_{\Zar}(\tilde{Z}') = 2 \delta_{\Zar}(\tilde{Z}) = d(\tilde{y}, \underline{g}, \underline{t}, v) - d_Y(\tilde{y}, \underline{g}, \underline{t}, v).
\]
Here the first inequality follows from part (i) of Lemma~\ref{LemmaFinitenessPreliminaryAuxiliarySet}.

Hence $(\tilde{y}, \underline{g}, \underline{t}, v) \in \Xi_0$. Suppose this quadriple does not belong to $\Xi$, then there exists $(\tilde{y}, \underline{g}_1, \underline{t}_1, v_1) \in \Upsilon$ such that $(\underline{g}\cdot \underline{t})\tilde{y} \supsetneq (\underline{g}_1\cdot \underline{t}_1)\tilde{y}$ and $d_Y(\tilde{y}, \underline{g}_1, \underline{t}_1, v_1) = d_Y(\tilde{y}, \underline{g}, \underline{t}, v) = \dim \tilde{Z}$. But then
\[
\tilde{Z} \subseteq (\underline{g}_1\cdot \underline{t}_1)\tilde{y} \subsetneq (\underline{g}\cdot \underline{t})\tilde{y} = \tilde{Z}^{\Zar}.
\]
This is a contradiction to part (i) of Lemma~\ref{LemmaFinitenessPreliminaryAuxiliarySet}.
\end{proof}

\begin{prop}\label{PropAuxiliaryFinitenessProp}
There exists a finite set $\Sigma$ consisting of elements of the form $\left( (Q,\cY^+),N \right)$, where $(Q,\cY^+)$ is a connected mixed Shimura subdatum of $(P,\cX^+)$ and $N$ is a normal subgroup of $Q$ whose reductive part is semi-simple such that the following property holds. If $\tilde{Z}$ is a complex analytic irreducible subset in $\bu^{-1}(Y)$ which is Zariski optimal and such that $\tilde{Z} \cap \mathfrak{F} \not= \emptyset$, then there exists $\left( (Q,\cY^+),N \right) \in \Sigma$ such that $\tilde{Z}^{\Zar} = N(\R)^+\tilde{y}$ for some $\tilde{y} \in \cY^+$.
\end{prop}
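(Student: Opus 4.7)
The plan is to combine Lemmas~\ref{LemmagtFinite} and~\ref{LemmaZariskiOptimalComeFromTriple} and then package the resulting finite combinatorial data into pairs $((Q,\cY^+),N)$. By Lemma~\ref{LemmaZariskiOptimalComeFromTriple}, any Zariski-optimal $\tilde{Z} \subseteq \bu^{-1}(Y)$ with $\tilde{Z} \cap \mathfrak{F} \neq \emptyset$ satisfies $\tilde{Z}^{\Zar} = (\underline{g}\cdot \underline{t})\tilde{y}$ for some $(\tilde{y},\underline{g},\underline{t},v) \in \Xi$, and by Lemma~\ref{LemmagtFinite} the set $E := \{(\underline{g}\cdot\underline{t},v) : (\tilde{y},\underline{g},\underline{t},v)\in\Xi\}$ is finite. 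So there are only finitely many ``types'' of Zariski closures, and it remains to recognize each type as $\bu(N(\R)^+\tilde{y})$ for a pair $((Q,\cY^+),N)$ from the claimed list.

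For each element $(\underline{g}\cdot\underline{t},v) \in E$, the proof of Lemma~\ref{LemmagtFinite} already exhibits a $\Q$-subgroup $N_0$ of $P$ such that $\underline{g}\cdot\underline{t} = (V_{N_0,\R},G_{N_0,\R}^{\mathrm{nc}})$, and exhibits $v \in V(\Q)$; then $N := \mathrm{int}(v,1)(V_{N_0}\rtimes G_{N_0}^{\mathrm{nc}})$ is a $\Q$-subgroup of $P$ with semi-simple reductive part and $\tilde{Z}^{\Zar} = N(\R)^+\tilde{y}$. To produce the connected mixed Shimura subdatum of Kuga type $(Q,\cY^+) \subseteq (P,\cX^+)$, I take $Q$ to be the generic Mumford--Tate group along $\tilde{Z}^{\Zar}$, i.e. the Mumford--Tate group $\mathrm{MT}(\tilde{y}')$ of a Hodge-generic point $\tilde{y}' \in \tilde{Z}^{\Zar}$, and $\cY^+$ the corresponding $Q(\R)^+$-orbit; this is a Shimura subdatum by \cite[Proposition~2.9]{GaoTowards-the-And} (Hodge-theoretic construction of the smallest Shimura subdatum containing a point), and one has $\tilde{Z}^{\Zar} \subseteq \cY^+$ with $N \lhd Q$ because the Mumford--Tate group of a Hodge-generic point of a weakly special subset normalizes the defining group $N$ (this is the André-type normality used in Lemma~\ref{LemmaNnormalinP} above).

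The main obstacle is to bound the choice of $(Q,\cY^+)$ by a \emph{finite} set as $\tilde{Z}$ (and hence $\tilde{y}$) varies. The issue is that for a fixed datum $(\underline{g}\cdot\underline{t},v) \in E$, the point $\tilde{y}$ could in principle vary in a continuous family, giving continuously varying Mumford--Tate groups. This is handled by noting that although $\tilde{y}$ varies, the smallest Shimura subdatum containing $\tilde{Z}^{\Zar}$ depends only on the $\Q$-group generated by $N$ together with the centralizer data recorded by $(\underline{g}\cdot\underline{t}, v)$: indeed $Q$ may be taken to be the smallest $\Q$-subgroup of $P$ with $N \lhd Q$ and such that $\cY^+ \supseteq N(\R)^+\tilde{y}$ comes from a Shimura subdatum, and this is determined by $N$ up to the finitely many Shimura subdata $(Q,\cY^+)$ containing the fixed $N(\R)^+$-orbit (finiteness of intermediate Shimura data containing a fixed $\Q$-subgroup is standard, compare \cite[\S12]{GaoTowards-the-And}).

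Finally, putting these together, define
\[
\Sigma = \{((Q,\cY^+),N) : N\text{ arises from some }(\underline{g}\cdot\underline{t},v)\in E,\ (Q,\cY^+)\text{ as above}\}.
\]
This $\Sigma$ is finite by finiteness of $E$ and by the finiteness step in the previous paragraph, and by construction every Zariski-optimal $\tilde{Z}\subseteq \bu^{-1}(Y)$ meeting $\mathfrak{F}$ has $\tilde{Z}^{\Zar} = N(\R)^+\tilde{y}$ for some $((Q,\cY^+),N) \in \Sigma$ and some $\tilde{y}\in\cY^+$, which is the required conclusion.
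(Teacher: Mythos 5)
Your proposal is essentially correct and follows the same overall route as the paper: invoke Lemma~\ref{LemmaZariskiOptimalComeFromTriple} to write $\tilde{Z}^{\Zar} = (\underline{g}\cdot\underline{t})\tilde{y}$ for a quadruple in $\Xi$, invoke Lemma~\ref{LemmagtFinite} for finiteness of the real data $(\underline{g}\cdot\underline{t},v)$, reconstruct the $\Q$-group $N$ from this data, and finally appeal to finiteness of the Shimura subdata $(Q,\cY^+)$ with $N\lhd Q$. Two remarks on precision.

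First, the construction $N := \mathrm{int}(v,1)(V_{N_0}\rtimes G_{N_0}^{\mathrm{nc}})$ is not immediately a $\Q$-group, since $G_{N_0,\R}^{\mathrm{nc}}$ is only a real subgroup of $G_{\R}$. The paper handles this by letting $G_N$ be the \emph{smallest connected $\Q$-subgroup} of $G$ containing $G_{N,\R}^{\mathrm{nc}}$; this recovers the original $\Q$-group because $G_N$, being normal in the derived group of a Shimura subdatum, has no compact $\Q$-simple factor, so each $\Q$-simple factor meets $G_{N,\R}^{\mathrm{nc}}$ and the reconstruction is exact. You should make this step explicit to ensure that $N$ really is a $\Q$-group uniquely determined by $(\underline{g}\cdot\underline{t},v)$.

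Second, your paragraph on bounding $(Q,\cY^+)$ is somewhat circuitous: you introduce the generic Mumford--Tate group and then worry about it varying continuously, before ultimately falling back on ``finiteness of intermediate Shimura data containing a fixed $\Q$-subgroup.'' The paper avoids the detour: weak Ax--Schanuel (Theorem~\ref{ThmASZariskiOptimal}) already hands you a pair $((Q,\cY^+),N)$ with $\tilde{Z}^{\Zar}=N(\R)^+\tilde{y}$, so there is no need to produce $Q$ from scratch via Mumford--Tate groups; one simply observes that for each of the finitely many $N$ there are only finitely many $(Q,\cY^+)$ with $N\lhd Q$, which is precisely \cite[Lemma~12.1]{GaoTowards-the-And}. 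Your route is valid but adds an unnecessary step, and citing the concrete lemma would tighten the argument.
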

\begin{proof}
By Theorem~\ref{ThmASZariskiOptimal}, we know that $\tilde{Z}$ is weakly special. Thus there exist a connected mixed Shimura subdatum of Kuga type $(Q,\cY^+)$ of $(P,\cX^+)$, a normal subgroup $N$ of $Q^{\der}$, and a point $\tilde{y} \in \cY^+$ such that $\tilde{Z} = N(\R)^+\tilde{y}$.

Let us prove that the $N$ arises from finitely many choices. It suffices to prove that $(V_N,G_N,v)$ arises from finitely many choices, where $V_N = V\cap N$, $G_N = N/V_N$ and $v \in V(\Q)$ such that $N = (v,1)(V_N \rtimes G_N)(-v,1)$. 
By Lemma~\ref{LemmaZariskiOptimalComeFromTriple} and Lemma~\ref{LemmagtFinite}, there are only finitely many choices for $(V_{N,\R}, G_{N,\R}^{\mathrm{nc}},v)$, where $G_{N,\R}^{\mathrm{nc}}$ is the almost direct product of the non-compact factors of $G_{N,\R}^+$. But then we can take $G_N$ to be the smallest connected $\Q$-subgroup of $G$ which contains $G_{N,\R}^{\mathrm{nc}}$. Hence we proved the finiteness of $N$.

Next for each $N$, there are only finitely many $(Q,\cY^+)$ such that $N \lhd Q$ by \cite[Lemma~12.1]{GaoTowards-the-And}. Hence we are done.
\end{proof}

\subsection{Proof of Theorem~\ref{ThmFinitenessAlaBogomolov}}
Let $Z$ be a closed irreducible subvariety of $Y$ which is weakly optimal. Let $\tilde{Z}$ be a complex analytic irreducible component of $\bu^{-1}(Z)$ such that $\tilde{Z} \cap \mathfrak{F} \not= \emptyset$. In view of Proposition~\ref{PropAuxiliaryFinitenessProp}, it suffices to prove that $\tilde{Z}$ is Zariski optimal in $\bu^{-1}(Y)$.

Let $\tilde{Z}' \supseteq \tilde{Z}$ be such that $\tilde{Z}' \subseteq \bu^{-1}(Y)$ is complex analytic irreducible and 
$\delta_{\Zar}(\tilde{Z}') \le \delta_{\Zar}(\tilde{Z})$. We may and do assume that $\tilde{Z}'$ is Zariski optimal. Then $\tilde{Z}'$ is a complex analytic irreducible component of $(\tilde{Z}')^{\Zar} \cap \bu^{-1}(Y)$, and $(\tilde{Z}')^{\Zar}$ is weakly special by Theorem~\ref{ThmASZariskiOptimal}.

On the other hand $\bu^{-1}(\bu(\tilde{Z}')^{\biZar}) \supseteq  (\tilde{Z}')^{\Zar}$ since $\bu(\tilde{Z}')^{\biZar}$ is bi-algebraic. So
\[
\bu(\tilde{Z}')^{\biZar} =  \bu \left( (\tilde{Z}')^{\Zar} \right).
\]
Hence we have
\begin{align*}
\delta_{\mathrm{ws}}(\bu(\tilde{Z}')^{\Zar}) & = \dim \bu(\tilde{Z}')^{\biZar} - \dim \bu(\tilde{Z}')^{\Zar}  = \dim (\tilde{Z}')^{\Zar} - \dim \bu(\tilde{Z}')^{\Zar} \\
& \le \dim (\tilde{Z}')^{\Zar} - \dim \tilde{Z}'  = \delta_{\Zar}(\tilde{Z}') \le \delta_{\Zar}(\tilde{Z})  = \dim \tilde{Z}^{\Zar} - \dim \tilde{Z} \\
& \le \dim Z^{\biZar} - \dim Z = \delta_{\mathrm{ws}}(Z).
\end{align*}
Since $\tilde{Z}' \subseteq \bu^{-1}(Y)$, we have $\bu(\tilde{Z}')^{\Zar} \subseteq Y$. Moreover recall our assumption that $Z$ is weakly optimal. So $Z = \bu(\tilde{Z}')^{\Zar}$. But then $\tilde{Z}' \subseteq \tilde{Z}$. So $\tilde{Z}$ is Zariski optimal. Hence we are done.

\section{A simple application to the Betti map}\label{SectionAppToBettiRank}
In this section, we present a simple application of the mixed Ax-Schanuel theorem for the universal abelian variety to the Betti map. Our goal is to show the idea, so we restrict ourselves to a simple case.

Our setting-up is as follows: let $S$ be an irreducible quasi-projective variety over $\C$ and let $\cA \rightarrow S$ be an abelian scheme of relative dimension $g$. By \cite[$\mathsection$2.1]{GenestierNgo}, $\cA \rightarrow S$ carries a polarization of type $D = \mathrm{diag}(d_1,\ldots,d_g)$ for some positive integers $d_1|d_2|\cdots|d_g$. Up to taking a finite covering of $S$, it induces a cartesian diagram
\[
\xymatrix{
\cA \ar[r]^-{\iota} \ar[d] & \mathfrak{A}_{g,D} \ar[d]^{\pi} \\
S \ar[r]^-{\iota_S} & \A_{g,D}.
}
\]
We furthermore make the following extra assumptions for simplicity:
\begin{itemize}
\item the geometric generic fiber of $\cA/S$ is a simple abelian variety.
\item $\iota_S$ is quasi-finite (so is $\iota$).
\end{itemize}

Because of the second bullet point, we may and do replace $\cA/S$ by $\iota(\cA)/\iota_S(S)$. Let  $\cA_{\mathfrak{H}_g^+}$ be the pullback of $\mathfrak{A}_{g,D} / \A_{g,D}$ under $\bu_G \colon \mathfrak{H}_g^+ \rightarrow \A_{g,D}$. 
Recall the real analytic diffeomorphism \eqref{EqiDelta}
\[
i_{\mathfrak{H}_g^+} \colon \R^{2g} \times \mathfrak{H}_g^+ \xrightarrow{\sim} \cX_{2g,\mathrm{a}}^+ = \lie(\cA_{\mathfrak{H}_g^+} / \mathfrak{H}_g^+).
\]
It induces then a real analytic diffeomorphism
\[
\bar{i}_{\mathfrak{H}_g^+} \colon \mathbb{T}^{2g} \times \mathfrak{H}_g^+ \xrightarrow{\sim} \cA_{\mathfrak{H}_g^+}.
\]
Hence we get the following map, which is called the \textbf{Betti map}
\[
b \colon \cA_{\mathfrak{H}_g^+} \xrightarrow{\sim} \mathbb{T}^{2g} \times \mathfrak{H}_g^+  \rightarrow \mathbb{T}^{2g}
\]
where the first map is $\bar{i}_{\mathfrak{H}_g^+}^{-1}$ and the last map is the projection.

Let $\tilde{S}$ be a complex analytic irreducible component of $\bu_G^{-1}(S)$, and let $\cA_{\tilde{S}}$ be the restriction of $\cA_{\mathfrak{H}_g^+}$ to $\tilde{S}$. By abuse of notation, we denote by
\[
b \colon \cA_{\tilde{S}} \rightarrow \mathbb{T}^{2g}
\]
the restriction of the Betti map.

\begin{thm}\label{ThmAppToBettiRank}
Let $\xi \colon S \rightarrow \cA$ be a multi-section. It induces a multi-section $\tilde{\xi}$ of $\cA_{\tilde{S}}/\tilde{S}$. Assume $\Z\xi$ is Zariski dense in $\cA$. If $\dim S \ge g$, then there exists $\tilde{s} \in \tilde{S}$ such that
\[
\mathrm{rank}(\mathrm{d}b|_{\tilde{\xi}(\tilde{s})}) = 2g.
\]
\end{thm}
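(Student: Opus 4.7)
The plan is to argue by contradiction: assume $\mathrm{rank}(\mathrm{d}b|_{\tilde{\xi}(\tilde{s})}) < 2g$ for every $\tilde{s} \in \tilde{S}$, and derive a contradiction from Theorem~\ref{ThmASUnivAbVar} combined with the geometric description of bi-algebraic subvarieties of $\mathfrak{A}_{g,D}$ (Propositions~\ref{PropGeomDescriptionOfBiAlgebraic} and~\ref{PropGeomOfsgInUnivAbVar}). Let $\tilde{W} := \tilde{\tilde{\xi}}(\tilde{S}) \subseteq \cX_{2g,\mathrm{a}}^+$ denote the image of a holomorphic lift of $\tilde{\xi}$, which is a complex-analytic irreducible subvariety of complex dimension $d = \dim S \ge g$.

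First I would identify $\tilde{W}^{\biZar}$. By Proposition~\ref{PropGeomOfsgInUnivAbVar}, an irreducible component of $\xi(S)^{\biZar} \cap \cA$ containing $\xi(S)$ is, up to a finite covering of $S$, a translate of an abelian subscheme of $\cA/S$ by a torsion section and a constant section; simplicity of the geometric generic fiber rules out proper abelian subschemes and non-trivial constant parts, while Zariski density of $\Z\xi$ rules out a torsion multi-section. Hence $\xi(S)^{\biZar} \supseteq \cA$. Passing to the smallest connected mixed Shimura subdatum of Kuga type $(Q, \cY^+) \subseteq (P_{2g,D,\mathrm{a}}, \cX_{2g,\mathrm{a}}^+)$ with $\tilde{W} \subseteq \cY^+$, the Mumford-Tate of the $1$-motive $[\Z\xi(s) \to \cA_s]$ at a very general $s$ is $V_{2g} \rtimes G_0$ (with $G_0$ the generic Mumford-Tate of $\cA/S$), so $V_Q = V_{2g}$ and $\iota_S(S)$ is Hodge-generic in $M_{G_Q}$. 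Combining this with $\xi(S)^{\biZar} \supseteq \cA$ forces $\bu(\tilde{W}^{\biZar}) = \xi(S)^{\biZar} = M_Q$, whence $\tilde{W}^{\biZar} = \cY^+$.

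Next I would apply Theorem~\ref{ThmWASUnivAbVar} to $\tilde{W}$ inside $\cY^+$. Since $\bu(\tilde{W}) = \xi(S)$ is already algebraic of dimension $d$, the weak Ax--Schanuel inequality gives $\dim \tilde{W}^{\Zar} \ge \dim \tilde{W}^{\biZar} = \dim \cY^+$, whence $\tilde{W}^{\Zar} = \cY^+$. Under the complex-analytic isomorphism $\cY^+ \cong \C^g \times \cY^+_G$ induced by $V_Q = V_{2g}$ and a Levi decomposition, Zariski density of $\tilde{W}$ forces its first projection $\tilde{\tilde{\xi}} \colon \tilde{S} \to \C^g$ to be dominant; in particular $\mathrm{d}\tilde{\tilde{\xi}}|_{Z_0}$ has complex rank $g$ at a generic $Z_0 \in \tilde{S}$.

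Finally, I would compute the real rank of $b \circ \tilde{\tilde{\xi}}$ at such a generic $Z_0$. Differentiating the defining relation $\tilde{\tilde{\xi}}(Z) = D\,a(Z) + Z\,b(Z)$ shows that
\[
\ker \mathrm{d}(b \circ \tilde{\tilde{\xi}})|_{Z_0} = \{w \in T_{Z_0}\tilde{S} : \mathrm{d}\tilde{\tilde{\xi}}|_{Z_0}(w) = w \cdot b(Z_0)\},
\]
where $w \cdot b(Z_0) \in \C^g$ denotes the symmetric tangent $w \in T_{Z_0}\cY^+_G$ applied to the real vector $b(Z_0) \in \R^g$. For $Z_0$ outside a proper Zariski-closed subset of $\tilde{S}$ the complex-linear map $\Psi_{Z_0}(w) := \mathrm{d}\tilde{\tilde{\xi}}|_{Z_0}(w) - w \cdot b(Z_0)$ attains its maximal rank $g$, so its kernel has complex dimension $d - g$; the real rank of $\mathrm{d}(b \circ \tilde{\tilde{\xi}})|_{Z_0}$ is therefore $2g$, contradicting the standing assumption. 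The main obstacle is the first step---establishing $\tilde{W}^{\biZar} = \cY^+$---which requires correctly identifying the smallest Shimura subdatum via the Mumford-Tate of the associated $1$-motive and leveraging the structural description of weakly special subvarieties in the universal abelian variety; the final Jacobian calculation, while essentially elementary, also requires verifying the non-degeneracy of $\Psi_{Z_0}$ at a generic point.
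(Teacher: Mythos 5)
The key step of your proposal is incorrect, and the gap is precisely where the real content of the theorem lies. Applying Theorem~\ref{ThmWASUnivAbVar} to $\tilde{W}=\tilde{\tilde\xi}(\tilde S)$ is legitimate (and non-trivial: since $\bu(\tilde W)=\xi(S)$ is already algebraic, the inequality $\dim\tilde W^{\Zar}\geq\dim\tilde W^{\biZar}$ forces $\tilde W^{\Zar}=\tilde W^{\biZar}$ -- this is the Ax--Lindemann case of the theorem). But note that this application never uses the standing assumption $\mathrm{rank}(\mathrm{d}b|_{\tilde\xi(\tilde s)})<2g$; it is valid for any multi-section $\xi$ as in the statement. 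What you obtain is that the composition $\tilde S\to\C^g$ given by the first factor of $\tilde{\tilde\xi}$ is dominant, so $\mathrm{d}\tilde{\tilde\xi}|_{Z_0}$ has complex rank $g$ generically. The crux is then your assertion that $\Psi_{Z_0}(w)=\mathrm{d}\tilde{\tilde\xi}|_{Z_0}(w)-w\cdot b(Z_0)$ also has rank $g$ generically. This does \emph{not} follow: the correction term $w\mapsto w\cdot b(Z_0)$ is not generically negligible, and $\mathrm{rank}\,\Psi_{Z_0}<g$ for all $Z_0$ is exactly the hypothesis you set out to contradict. You acknowledge at the end that the nondegeneracy of $\Psi_{Z_0}$ ``requires verification,'' but in fact it is equivalent (after the Jacobian identification, which you carry out correctly) to the theorem itself, so as written the argument is circular. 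Nothing in your Ax--Schanuel step constrains the terms $D\,\mathrm{d}a+Z_0\,\mathrm{d}b$ once the extraneous $b(Z_0)\,\mathrm{d}Z_0$ piece is stripped from $\mathrm{d}\tilde{\tilde\xi}$. (A smaller, recoverable issue: your claim $\tilde W^{\biZar}=\cY^+$ amounts to $S^{\biZar}=M_{G_Q}$, i.e.\ that the monodromy of $\cA/S$ is the full derived Mumford--Tate group, which is not automatic; but the correct conclusion $\tilde W^{\Zar}=\tilde W^{\biZar}$ still gives the dominance of the $\C^g$-projection, so this does not affect the argument.)

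The paper's proof proceeds differently precisely to avoid this circularity. It fixes a point $(a,\tilde s)\in\tilde Y$ with $a$ having an irrational coordinate, and uses the hypothesis $\mathrm{rank}(\mathrm{d}b|_{(a,\tilde s)})<2g$ to produce, via the complex-analyticity of the Betti-map fibers (property~(ii) below \eqref{EqiDelta}), a complex analytic set $\tilde C\subseteq\tilde S$ of dimension $\geq\dim S-g+1$ with $\{a\}\times\tilde C\subseteq\tilde Y$. The weak Ax--Schanuel theorem is then applied to the ``horizontal slice'' $\{a\}\times\tilde C$, not to the graph $\tilde W$. Because $a$ is irrational and the geometric generic fiber is simple, $\bu(\{a\}\times\tilde C)^{\biZar}=\mathfrak{A}_{g,D}|_{\bu_G(\tilde C)^{\biZar}}$, which gains $g$ complex dimensions over its base; combined with the dimension lower bound on $\tilde C$, the Ax--Schanuel inequality forces $\dim\bu_G(\tilde C)^{\Zar}\geq\dim S+1$, contradicting $\bu_G(\tilde C)\subseteq S$. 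In other words: the rank-deficiency hypothesis must be fed into the geometric input to Ax--Schanuel; applying Ax--Schanuel to $\tilde W$ alone, as you do, cannot see the hypothesis at all and therefore cannot close the argument.
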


\begin{proof}
Denote by $Y = \xi(S)$. It is an irreducible subvariety of $\mathfrak{A}_{g,D}$. Consider the diagram
\[
\xymatrix{
\cX_{2g,\mathrm{a}}^+  \ar[d]_{\bu} & \R^{2g} \times \mathfrak{H}_g^+ \ar[l]^-{i_{\mathfrak{H}_g^+}}_-{\sim} \\
\mathfrak{A}_{g,D}
}.
\]
Take a complex analytic irreducible component $\tilde{Y}$ of $\bu^{-1}(Y)$. By abuse of notation we identify $\cX_{2g,\mathrm{a}}^+$ and $\R^{2g} \times \mathfrak{H}_g^+$ and no longer write $i_{\mathfrak{H}_g^+}(\cdot)$.

Let $G = \mathrm{MT}(\tilde{S})$.  
Since $\Z\xi$ is Zariski dense in $\cA$, it is possible to take $\tilde{s} \in \tilde{S}$ with Mumford-Tate group $G$ and $a \in \R^{2g} = V_{2g}(\R)$ with some non-rational coordinate such that $(a,\tilde{s}) \in \tilde{Y}$. It suffices to prove that $\mathrm{rank}(\mathrm{d}b|_{(a,\tilde{s})}) = 2g$.

If $\mathrm{rank}(\mathrm{d}b|_{(a,\tilde{s})}) < 2g$, then by property (ii) below \eqref{EqiDelta} there exists a complex analytic variety $\tilde{C} \subseteq \tilde{S}$ of dimension $\ge \dim S - g + 1$ passing through $\tilde{s}$ such that $\{a\} \times \tilde{C} \subseteq \tilde{Y}$. Apply the weak Ax-Schanuel theorem (Theorem~\ref{ThmWASUnivAbVar}) to $\tilde{Z} = \{a\} \times \tilde{C}$, then we have
\begin{equation}\label{EqASForBetti1}
\dim (\{a\} \times \tilde{C})^{\Zar} + \dim \bu(\{a\} \times \tilde{C})^{\Zar} \ge \dim (\{a\} \times \tilde{C}) + \dim \bu(\{a\} \times \tilde{C})^{\biZar}.
\end{equation}
But $\{a\} \times \tilde{C}^{\Zar}$ is complex analytic (by property (ii) below \eqref{EqiDelta}) and real algebraic, and so is algebraic. Thus $(\{a\} \times \tilde{C})^{\Zar} = \{a\} \times \tilde{C}^{\Zar}$. On the other hand, the characterization of bi-algebraic subvarieties of $\mathfrak{A}_{g,D}$ and the assumption on $a$ imply that $\bu(\{a\} \times \tilde{C})^{\biZar} = \mathfrak{A}_{g,D}|_{\bu_G(\tilde{C})^{\biZar}}$; see Proposition~\ref{PropGeomDescriptionOfBiAlgebraic}.\footnote{Here we use the fact that the geometric generic fiber of $\cA/S$ is a simple abelian variety and that $\cA/S$ is non-isotrivial. In particular $\cA/S$ has no fixed part after finite base change.} So \eqref{EqASForBetti1} becomes
\begin{equation}\label{EqASForBettiTotalSpace}
\dim \tilde{C}^{\Zar} + \dim \bu(\{a\} \times \tilde{C})^{\Zar} \ge \dim S - g + 1 + \dim \mathfrak{A}_{g,D}|_{\bu_G(\tilde{C})^{\biZar}}.
\end{equation}
But $\dim \tilde{C}^{\Zar} \le \dim \tilde{C}^{\biZar} = \dim \bu_G(\tilde{C})^{\biZar}$. Moreover $\dim \bu(\{a\} \times \tilde{C})^{\Zar} = \dim \bu_G(\tilde{C})^{\Zar}$ since $\{a\} \times \tilde{C} \subseteq \tilde{Y}$ and $Y = \xi(S)$ is a multi-section of $\cA/S$. So we obtain from \eqref{EqASForBettiTotalSpace}
\[
\dim \bu_G(\tilde{C})^{\biZar} + \dim \bu_G(\tilde{C})^{\Zar} \ge \dim S - g + 1 + \dim \mathfrak{A}_{g,D}|_{\bu_G(\tilde{C})^{\biZar}}.
\]
Thus
\[
\dim \bu_G(\tilde{C})^{\Zar} \ge \dim S - g + 1 + (\dim \mathfrak{A}_{g,D}|_{\bu_G(\tilde{C})^{\biZar}} - \dim \bu_G(\tilde{C})^{\biZar}) = \dim S - g + 1 + g.
\]
Hence $\dim \bu_G(\tilde{C})^{\Zar}  \ge \dim S + 1$. But this is impossible as $\bu_G(\tilde{C})^{\Zar} \subseteq S$. So we get a contradiction.
\end{proof}

\end{document}